\newcommand{\veps}{\varepsilon}
\newcommand{\Var}{\mathrm{Var}}
\DeclareMathOperator{\supp}{supp}
\newtheorem{theorem}{Theorem}[section]
\newtheorem{corollary}[theorem]{Corollary}
\newtheorem{definition}[theorem]{Definition}
\newtheorem{example}[theorem]{Example}
\newtheorem{lemma}[theorem]{Lemma}
\newtheorem{proposition}[theorem]{Proposition}
\newtheorem{assumption}[theorem]{Assumption} 
\theoremstyle{remark}
\newtheorem{remark}[theorem]{Remark}
\newcommand{\R}{\mathbb R}
\renewcommand{\epsilon}{\varepsilon}
\definecolor{cherry}{rgb}{0.8,0.1,0.5}
\newcommand{\E}{\mathbb{E}}
\DeclarePairedDelimiter{\abs}{\lvert}{\rvert}
\DeclarePairedDelimiter{\norm}{\lVert}{\rVert}
\DeclarePairedDelimiterX\paren[1]{(}{)}{%
#1%
}
\DeclarePairedDelimiterX\brak[1]{[}{]}{%
  #1%
}
\DeclarePairedDelimiterX\set[1]\{\}{%
  #1%
}
\newcounter{GICase}\setcounter{GICase}{0}
\newcommand{\restartcases}{\setcounter{GICase}{0}}
\newcommand{\case}[1][\relax]{\smallskip\par\noindent\stepcounter{GICase}\emph{Case \Roman{GICase}: \ifx#1\relax\relax\else#1.\fi}}
\newcounter{GIStep}\setcounter{GIStep}{0}
\newcommand{\restartsteps}[1][0]{\setcounter{GIStep}{#1}}
\newcommand{\step}[1][\relax]{\smallskip\par\noindent\stepcounter{GIStep}\emph{Step \arabic{GIStep}: \ifx#1\relax\relax\else#1.\fi}}
\newcommand{\var}{\mathrm{Var}}
\newcommand{\osc}{\mathrm{osc}}
\title{Sample complexity for divergence regularized optimal transport with radial cost}
\author{Ruiyu Han}
\address[Ruiyu Han]
{\newline
\mbox{}\hspace{0.3cm} Department of Mathematics\newline
\mbox{}\hspace{0.3cm} Carnegie Mellon University\newline
\mbox{}\hspace{0.3cm} Pittsburgh, PA 15213}
\email{ruiyuh@andrew.cmu.edu}
\author{Johannes Wiesel}
\address[Johannes Wiesel] 
{\newline 
\mbox{}\hspace{0.3cm} Department of Mathematics\newline
\mbox{}\hspace{0.3cm} University of Copenhagen\newline
\mbox{}\hspace{0.3cm} Universitetsparken 5\newline
\mbox{}\hspace{0.3cm} 2100 Copenhagen, Denmark}
\email{wiesel@math.ku.dk}
\date{\today}
\begin{document}

\maketitle

\begin{abstract}
We prove a new sample complexity result for divergence regularized optimal transport. Our bound holds for probability measures on~$\R^d$ with exponential tail decay and for radial cost functions that satisfy a local Lipschitz condition. It is sharp up to logarithmic factors, and captures the intrinsic dimension of the marginal distributions through a generalized covering number of their supports.  Examples that fit into our framework include subexponential and subgaussian distributions and radial cost functions~$c(x,y)=|x-y|^p$ for~$p\ge 1$ with logarithmic entropy or polynomial~$\alpha$-divergence.
\end{abstract}

\section{Introduction}

Let~$\mu,\nu$ be probability measures on~$\R^d$ for some~$d\ge 1$, and assume that we are given i.i.d samples~$X_1,...,X_n$,~$Y_1,...,Y_n$ drawn from~$\mu$ and~$\nu$ respectively. Define the empirical measures
\begin{equation*}
    \mu_n := \frac{1}{n}\sum_{i=1}^{n}\delta_{X_i}, \quad\nu_n := \frac{1}{n}\sum_{i=1}^{n}\delta_{Y_i}.
\end{equation*}
Many works in statistical optimal transport have studied comparisons of the optimal transport problem 
\begin{align}\label{eq: OT}
\mathcal{C}_0(\mu,\nu):=\inf\limits_{\pi\in\Pi(\mu,\nu)}\int c(x,y)\,\pi(dx,dy) \tag{OT}
\end{align}
with its empirical counterpart~$\mathcal{C}_0(\mu_n,\nu_n)$. Here ~$c:\mathbb R^d\times\mathbb R^d\to\mathbb R$ is a cost function,~$\Pi(\mu,\nu)$ denotes the set of probability distributions~$\pi$ on~$\R^d\times \R^d$ with marginals~$\mu$ and~$\nu$ and~$\mathcal{C}_0(\mu_n,\nu_n)$ is defined as in~\eqref{eq: OT} with the empirical measures~$\mu_n, \nu_n$ replacing their population versions~$\mu,\nu$; we refer to \cite{villani2009optimal, santambrogio2015optimal} for fundamental properties of~\eqref{eq: OT}. It is well-known that comparisons between~$\mathcal{C}_0(\mu,\nu)$ and~$\mathcal{C}_0(\mu_n,\nu_n)$ suffer from the so-called \emph{curse of dimensionality}, i.e.~the difference~$\E[|\mathcal{C}_0(\mu_n\,\nu_n)-\mathcal{C}_0(\mu,\nu)|]$ scales like~$n^{-1/d}$ in general; see \cite{dudley1969speed, fournier2015rate, weed2019sharp}. This severely restricts applications of OT to high-dimensional data sets. The most popular remedy for this issue is to add a penalization term to~\eqref{eq: OT}: for~$\epsilon>0$, the \emph{regularized optimal transport (ROT)} problem is given by
\begin{equation}\label{eq: ROT}
    \mathcal{C}_{\veps}(\mu,\nu) := \inf\limits_{\pi\in\Pi(\mu,\nu)}\int c\,d\pi+\veps D_{\varphi}(\pi|\mu\otimes\nu).\tag{ROT}
\end{equation}
Here~$\mu\otimes \nu$ is the product coupling of~$\mu$ and~$\nu$, and~$D_{\varphi}$ is a~$\varphi$-divergence, defined as
\begin{equation}
 D_{\varphi}(\pi|\rho) = \begin{cases}
    \int \varphi\Big(\frac{d\pi}{d\rho}\Big) d\rho & \pi\ll \rho,\\
    \infty &\text{otherwise}
\end{cases}
\end{equation}
for~$\pi, \rho\in\mathcal P(\R^d\times \R^d)$ and~$\varphi:[0,\infty) \to\R$. The most famous examples of ROT are \emph{entropic optimal transport (EOT)}, where~$\varphi(x)=x\log(x)$ and \emph{quadratic optimal transport (QOT)}, where~$\varphi(x)=|x|^2/2$. Introduced in \cite{galichon_saliane_2010_matching, cuturi2013sinkhorn} to speed up computation of OT problems, EOT has by now undergone an extensive investigation in the statistical sciences (see Section~\ref{sec:related} below for an overview of recent advances). On the contrary, quadratic regularization has become popular in the mathematical sciences only quite recently. Early work on QOT includes \cite{muzellec2017tsallis, blondel2018smooth, essid2018quadratically}, and highlights its superior approximation properties of OT maps, see e.g. \cite{gonzalez2024sparsity, wiesel2025sparsity}, as well as its numerical stability for small penalization parameter~$\epsilon,$ compared to EOT.

In this paper, we aim to find upper bounds for the quantity 
\begin{equation} \label{eq:aim}
   \E \left[\left|\mathcal{C}_{\veps}(\mu_n,\nu_n)-\mathcal{C}_{\veps}(\mu,\nu)\right|\right].
\end{equation}

The problem of bounding~\eqref{eq:aim} for EOT goes back at least to \cite{genevay2019sample, chizat2020faster}. The case of subgaussian measures~$\mu,\nu$ with quadratic cost has been addressed in \cite{mena2019statistical}. More recently, \cite{rigollet2022sample} derives dimension-free bounds for bounded cost functions. However, the rates scale exponentially in~$1/\epsilon$. Our method and setting is most closely related to the subsequent work \cite{stromme2023minimum}, that assumes continuous cost functions on compact spaces.  Let us also mention \cite{bayraktar2025stability}, that derive non-optimal rates for~\eqref{eq:aim} and OT problems regularized by general divergences. To the best of our knowledge, our article is the first work to derive sharp bounds on the sample complexity~\eqref{eq:aim} for radial (unbounded) cost functions on unbounded spaces. Our main result, Theorem~\ref{thm: main}, states that under fairly general assumptions on~$c$ and~$\mu,\nu$, the quantity~\eqref{eq:aim} is of order~$1/\sqrt{n}$ up to logarithmic factors. We achieve this by extending the methodology of \cite{stromme2023minimum} to probability measures with exponential tail decay. As in Stromme's work, our rates depend on the minimum of the covering numbers of the (appropriately normalized) supports of~$\mu,\nu$ which is a concept called \emph{minimum intrinsic dimension scaling} of EOT. We provide a more detailed comparison of Theorem~\ref{thm: main} with the works mentioned above in Section~\ref{sec:discussion}.

While our methodology offers sharp estimates for the sample complexity of the ROT cost in a quite general setting, our approach does not directly extend to sample complexity estimates of dual potentials resp.~OT transport maps.%
\subsection{Related work}\label{sec:related}
The literature on statistical OT has grown tremendously in the last couple of years. Instead of providing a complete literature review, we refer to \cite{panaretos2020invitation, chewi2025statistical} for an overview and only highlight a few landmark papers here.

OT has found many applications in statistics recently, see \cite{carlier2016vector, chernozhukov2017monge, hallin2021distribution, ghosal2022multivariate, wiesel2022measuring} and the references therein.

As mentioned above, determining the sample complexity for OT problems has a long history; see \cite{fournier2015rate} and the references therein. Recently \cite{hundrieser2024empirical} shows that, similar to the EOT case discussed here, the convergence of the empirical OT problem is determined by the less complex marginal law.%

Turning to EOT, apart from the sample complexity results mentioned above, significant progress has also been made in finding distributional limits for empirical entropic optimal transport quantities, see \cite{gonzalez2022weak, goldfeld2024statistical, goldfeld2024limit, mordant2024entropic, gonzalez2023weak, del2023improved} and the references therein. We also remark that convergence of EOT to the OT problem for~$\epsilon\to 0$ is of independent interest and has been studied e.g.~in \cite{pal2019difference, chizat2020faster, conforti2021formula, pooladian2021entropic,altschuler2022asymptotics, nutz2022entropic}. Lastly, apart from its superior sample complexity, EOT also offers better computational complexity as observed e.g.~in \cite{altschuler2017near}.

Next to the initial works sparking research in QOT mentioned in the Introduction, recent advances include \cite{lorenz2021quadratically, nutz2025quadratically, gonzalez2025linear}. Let us also emphasize that \cite{gonzalez2025sample} derives (parametric) central limit theorems for dual potentials, optimal couplings, and optimal costs. Our main result applied to QOT provides the corresponding finite sample guarantees. 

\subsection{Notation}

We equip~$\R^d$ with the Euclidean norm~$|\cdot|$ and denote the open ball of radius~$r>0$ around the point~$x\in \R^d$ by~$B_r(x)$. We write~$B_r:=B_r(0)$ for simplicity. We denote the complement of a set~$A\subseteq \R^d$ by ~$A^c$. 
The set of (Borel) probability measures on~$\R^d$ is denoted by~$\mathcal{P}(\R^d).$ If~$\mu \in \mathcal{P}(\R^d)$ and~$A\subseteq \R^d$ is a Borel set, then~$\mu|_A(\cdot ):=\mu(\cdot \cap A)$ is the restriction of~$\mu$ to~$A.$ We denote the product measure of two probability measures~$\mu,\nu\in \mathcal{P}(\R^d)$ by~$\mu\otimes \nu.$ We write~$M_p( \mu) :=(\int\|x\|^p\,  \mu(dx))^{1/p}$ for~$ \mu\in\mathcal{P}(\mathbb{R}^d)$ and write~$\mathrm{spt}(\mu)$ for the support of~$\mu$. Using the same notation as in~\eqref{eq: OT} above, we define the~$p$-Wasserstein distance on~$\mathcal{P}(\R^d)$ as 
\begin{align*}
W_p(\mu,\nu)^p :=\inf_{\pi\in \Pi(\mu,\nu)} \int |x-y|^p \,\pi(dx,dy).  
\end{align*}
For measures~$\pi, \tilde{\pi}\in \mathcal{P}(\R^d\times \R^d)$ we  define the~$p$-Wasserstein distance 
\begin{equation}\label{eq:defwp}
W_p(\pi,\tilde \pi)^p :=\inf_{\gamma\in \Pi(\pi,\tilde \pi)} \int [|x_1-y_1|^p+|x_2-y_2|^p ]\,\gamma(dx,dy)
\end{equation}
for~$x=(x_1,x_2), y=(y_1,y_2)\in \R^{d\times d}$, in accordance with \cite{eckstein2022quantitativestabilityregularizedoptimal}.
The covering number of a set~$A\subseteq \mathbb R^d$ at scale~$\delta>0$ is defined as 
\begin{equation}
    \mathcal{N}(A,\delta):=\min\Big\{k\in\mathbb N\:|\: \exists x_1,...,x_k\in \mathbb R^d\::\: A\subseteq \bigcup_{\ell=1}^{k} B_\delta(x_\ell)\Big\}.
\end{equation}
The incomplete Gamma function is given by
\begin{align}\label{eq:gamma}
\Gamma(s,x) := \int_{x}^{\infty}t^{s-1}e^{-t}\, d t, 
\end{align}
where~$x\ge 0$ and~$s>0$,
and the Gamma function is~$\Gamma(s):=\Gamma(s,0).$ 
We denote constants by~$C$, with the convention that~$C$ can increase from line to line. We always state the dependence of constants on quantities of interest explicitly.

\section{Main Result}

Throughout the paper we make three assumptions. The first one states that the tails of~$\mu,\nu$ decay exponentially.

\begin{assumption}\label{assumption:Psialpha}
 There exist constants~$c_{\mu}, c_{\nu}>0$ and~$ \alpha_{\mu},\alpha_{\nu}\geq 1$
such that
\begin{equation}\label{eq:PsiAlapha}
\mu(B^c_{r})\leq 2\exp\big(-c_{\mu}r^{\alpha_{\mu}}\big),\qquad \nu(B^c_{s})\leq 2\exp\big(-c_{\nu}s^{\alpha_{\nu}}\big)
\end{equation}
for all~$r,s>0$.
\end{assumption}
\noindent Well-known distributions satisfying Assumption~\ref{assumption:Psialpha} are
subgaussian distributions ($\alpha_\mu=\alpha_\nu=2$), subexponential distributions ($\alpha_\mu=\alpha_\nu=1$) or more generally, probability measures on Orlicz spaces of exponential type. 

We also make an assumption on the shape of the cost function~$c$.

\begin{assumption}\label{assumption: cost function}
The cost function satisfies~$c(x,y)=h(|x-y|)$ for some continuous function~$h:\mathbb [0,\infty) \to [0,\infty)$ with~$h(0)=0$, and there exist constants~$p\geq 1$ and~$C_p>0$ such that
\begin{equation}\label{eq:devh}
   |h(t)-h(t')|\le C_p (t\vee t')^{p-1} |t-t'|,\quad \forall  t, t'>0.
\end{equation}
\end{assumption}
\noindent Important examples of cost functions satisfying Assumption~\ref{assumption: cost function} are~$c(x,y)=|x-y|^p$ for~$p\ge 1.$ Lastly we make an assumption on the divergence~$\varphi$.

\begin{assumption}\label{assum:psi}
 The function~$\varphi: [0,\infty)\to \R$ is strictly convex with
$\varphi(1)=0$,~$\lim_{x\to\infty}\varphi(x)/x=+\infty$ and such that the convex conjugate
\begin{equation}
 y\mapsto \psi(y):= \varphi^{*}(y):= \sup_{x\geq 0}\{xy-\varphi(x)\}
\end{equation}
is in~$C^1(\mathbb{R})$. Moreover, there exists~$t_0>0$ and~$\delta\in (0, t_0)$ such that~$\psi'(t_0)=1$ and~$\psi$ is strictly convex and~$C^2$ on~$[t_0-\delta,+\infty)$. In addition, there exists~$C_{\psi}>0, \gamma>0$ such that for all~$x\in [t_0-\delta,+\infty)$
\begin{equation}\label{eq:psi}
    \abs[\Big]{\frac{\psi''(x)}{\psi'(x)}}\leq \frac{C_{\psi}}{|x|^{\gamma}}, \quad \gamma\geq 0.
\end{equation}
\end{assumption}

Note that Assumption~\ref{assum:psi} is similar to \cite[Assumption 2.1]{gonzalez2025sparse}. Compared to their assumption, we however do not require~$\psi$ to be~$C^2(\R)$, so that QOT is included in our setting. 
\begin{example}
For EOT we have~$\varphi(x)=x\log x$, which yields~$\psi'(y)=e^{y-1}$ and~$\psi''(y)=e^{y-1}$, so that Assumption~\ref{assum:psi} is satisfied with~$t_0=1, C_{\psi}=1, \gamma=0$ and arbitrary~$\delta\in (0,1)$. Next, consider the polynomial (Tsallis) divergence,
\begin{equation}
    \varphi(x)=\frac{x^{\alpha}-1}{\alpha}
\end{equation}
for~$\alpha\in (1,\infty)$, where the case~$p=2$ corresponds to QOT. Then~$\psi'(y)=y_{+}^{\beta-1}$ where~$1/\alpha+1/\beta=1$ and~$y_{+}:=\max\{y,0\}$. For any~$\alpha\in (1,\infty)$, it can be checked that~$\varphi$ satisfies Assumption~\ref{assum:psi} with ~$t_0=1, C_{\psi}=\beta-1, \gamma=1$ and arbitrary~$\delta\in (0,1)$.
\end{example}

We are now in a position to state our main result.

\begin{theorem}\label{thm: main}
Let Assumptions~\ref{assumption:Psialpha}-\ref{assum:psi} hold. We define
\begin{align}\label{eq:chooseRxRy}
\begin{split}
 r^\mu_n &:= \Big[ 4pc_{\mu}^{-1}\paren{c_{\mu}^{-1}\vee 1}\Big(\frac{p}{\alpha_{\mu}}\vee 1\Big)^2\log(n)\Big]^{\frac{1}{\alpha_{\mu}}},\\
 r^\nu_n &:= \Big[4pc_{\nu}^{-1}\paren{c_{\nu}^{-1}\vee 1}\Big(\frac{p}{\alpha_{\nu}}\vee 1\Big)^2\log(n)\Big]^{\frac{1}{\alpha_{\nu}}},
 \end{split}
 \end{align}
and 
\begin{align}\label{eq:Bn}
B_n^\mu := B_{r_n^\mu}(0)\cap \mathrm{spt}(\mu),\qquad B_n^\nu  := B_{r_n^\nu}(0)\cap \mathrm{spt}(\nu).
\end{align}
Then
\begin{align}
&\E [|\mathcal{C}_{\veps}(\mu_n,\nu_n)-\mathcal{C}_{\veps}(\mu,\nu)|]
\leq \frac{C}{\sqrt{n}}\Big(1+c_{\mu}^{-\frac{p}{\alpha_{\mu}}}+c_{\nu}^{-\frac{p}{\alpha_{\nu}}}\Big)+\frac{C}{\sqrt{n}} \big( \veps+(r_n^\mu+r_n^\nu)^{p}\big)\\
&\hspace{4.5cm}\cdot 
\paren[\bigg]{1+e^{\frac{C_{\psi}\tilde{\delta}}{2(t_0/2)^{\gamma}}}\sqrt{\mathcal{N} \Big(B^\mu_n, \frac{\tilde{\delta}\veps}{2C_p(r^\mu_n+r^\nu_n)^{p-1}}\Big)\wedge \mathcal{N}\Big(B^\nu_n, \frac{\tilde{\delta}\veps}{2C_p(r^\mu_n+r^\nu_n)^{p-1}}\Big)}} \label{eq:main}
\end{align}
holds for all~$n\ge 5$, where the constant~$C$ only depends on~$\alpha_{\mu},\alpha_{\nu},p, C_p, t_0$ and 
\begin{align}\label{eq:delta}
     \tilde{\delta}=\min\{\delta, t_0/2\}
\end{align}
\end{theorem}

The main idea behind the proof of Theorem~\ref{thm: main} relies on a careful approximation of the difference~$\E[|\mathcal{C}_{\veps}(\mu_n,\nu_n)-\mathcal{C}_{\veps}(\mu,\nu)|]$ with probability measures that are supported on the closures of~$ B_r,  B_s$ for appropriately chosen~$r,s>0$. More concretely, let~$\mu^{B_r}$ be the conditional distribution of~$\mu$ given~$\{x\in B_r\}$ and define~$\nu^{B_s}$ similarly. We then write
\begin{align}\label{eq:main_idea_proof}
\begin{split}
\E[|\mathcal{C}_{\veps}(\mu_n,\nu_n)-\mathcal{C}_{\veps}(\mu,\nu)|] &\le  \E[|\mathcal{C}_{\veps}(\mu_n,\nu_n)-\mathcal{C}_{\veps}(\mu^{B_r}_n,\nu^{B_s}_n)|]\\
&\quad +\E[|\mathcal{C}_{\veps}(\mu^{B_r}_n,\nu^{B_s}_n)-\mathcal{C}_{\veps}(\mu^{B_r},\nu^{B_s})|]\\
&\quad +|\mathcal{C}_{\veps}(\mu^{B_r},\nu^{B_s})-\mathcal{C}_{\veps}(\mu,\nu)|   
\end{split}
\end{align}
and estimate the three summands on the right-hand side of~\eqref{eq:main_idea_proof} separately. Compared to existing results in the literature, this allows us to derive bounds, that depend on~$c$ only through Assumption~\ref{assumption: cost function}. In particular, our results do not rely on structural assumptions or  smoothness of the cost function, nor on smoothness of the dual potentials.

\begin{remark}
Let us state briefly how the constant~$C$ in Theorem~\ref{thm: main} depends on~$\alpha_{\mu},\alpha_{\nu}$ and~$p, C_p, t_0$: we have polynomial dependence on~$C_p$ and~$t_0$. The dependence on~$p$ is exponential. Moreover,~$C$ depends exponentially on~$1/\alpha_{\mu}$,~$1/\alpha_{\nu}$, as it contains a multiplicative factor of~$\Gamma(p/\alpha_{\mu})$ and~$\Gamma(p/\alpha_{\nu})$. This aligns with our intuition that the larger~$\alpha_{\mu},\alpha_{\nu}$, the more quickly the tails decay, and the closer we are to the compactly supported setting.
\end{remark}

\textbf{Structure of the article}.
The remainder of this article is structured as follows: we give examples of Theorem~\ref{thm: main} in Section~\ref{sec:discussion}. Section~\ref{sec:preliminary} collects some preliminary results needed for the proof of Theorem~\ref{thm: main}. The first and last terms in~\eqref{eq:main_idea_proof} are estimated in  Section~\ref{sec:compact}, using results from \cite{eckstein2022quantitativestabilityregularizedoptimal}, while the middle term is estimated in Section~\ref{sec:stromme} using results from \cite{stromme2023minimum}.  We state the proof of Theorem~\ref{thm: main} in Section~\ref{sec:proofmain}, while we collect all remaining proofs in Section~\ref{sec:proof_prelim} and Appendices~\ref{sec:appa}-\ref{sec:ruiyu_compact}.

\subsection{Examples and discussion of Theorem~\ref{thm: main}}
\label{sec:discussion}

We now highlight several applications of Theorem~\ref{thm: main}. First we remark that for compactly supported distributions, we recover \cite[Theorem 2]{stromme2023minimum}.

\begin{corollary}[Compactly supported distributions] Assume that~$\mu,\nu$ are supported on~$B_1$ and that~$c$ is~$1$-Lipschitz. Then 
\begin{align}
&\E [|\mathcal{C}_{\veps}(\mu_n,\nu_n)-\mathcal{C}_{\veps}(\mu,\nu)|]  \le \frac{C}{\sqrt{n}} (1+\epsilon)\cdot \Bigg(1+e^{\frac{C_{\psi}\tilde{\delta}}{2(t_0/2)^{\gamma}}}\sqrt{\mathcal{N} \Big(\mathrm{spt}(\mu), \frac{\tilde\delta \veps}{2}\Big)\wedge \mathcal{N} \Big(\mathrm{spt}(\nu), \frac{\tilde \delta \veps}{2}\Big)}\Bigg)
\end{align}
for some constant~$C>0$.
\end{corollary}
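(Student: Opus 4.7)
The plan is to trace through the decomposition \eqref{eq:main_idea_proof} in the compactly supported setting and observe that it collapses to a single Stromme-type estimate. Since $\mathrm{spt}(\mu)$ and $\mathrm{spt}(\nu)$ are contained in $B_1$, the natural choice is $r = s = 1$, which immediately gives $\mu^{B_r} = \mu$ and $\nu^{B_s} = \nu$; moreover, almost surely all samples $X_i, Y_j$ lie in $B_1$, so $\mu_n^{B_r} = \mu_n$ and $\nu_n^{B_s} = \nu_n$. Consequently, the first and third summands on the right-hand side of \eqref{eq:main_idea_proof} vanish identically, and only the middle summand $\E[|\mathcal{C}_\veps(\mu_n,\nu_n) - \mathcal{C}_\veps(\mu,\nu)|]$ remains to be controlled.

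Next, I would estimate the middle summand using the bound developed in Section \ref{sec:stromme}, which adapts the minimum intrinsic dimension methodology of \cite{stromme2023minimum} to the truncated setting. Specializing that estimate to $r = s = 1$ and a $1$-Lipschitz cost, one observes that on $B_1 \times B_1$ all relevant distances are at most $2$. Consequently the prefactor of the form $\veps + (r+s)^p$ collapses to a constant multiple of $1 + \veps$, and the scale $\veps/(r+s)^{p-1}$ in the covering number becomes a constant multiple of $\veps$. Combined with the identities $B_n^\mu = \mathrm{spt}(\mu)$ and $B_n^\nu = \mathrm{spt}(\nu)$, this yields the claimed bound, with the $\veps/2$ scaling arising from evaluating $\veps/(r+s)^{p-1}$ at $p=2$ and $r=s=1$.

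The main obstacle is that Assumption \ref{assumption: cost function} is not literally satisfied by a generic $1$-Lipschitz cost near the origin (the required $(t\vee t')^{p-1}$ weight is too strong as $t,t'\to 0$), so the corollary cannot be obtained by purely symbolic substitution in Theorem \ref{thm: main}. The substantive task is therefore to verify that the intermediate estimate established in Section \ref{sec:stromme} goes through under the weaker local Lipschitz hypothesis on the compact region $B_1\times B_1$, without invoking the full strength of Assumption \ref{assumption: cost function}. Since this is precisely the regime treated in \cite{stromme2023minimum}, compatibility is expected, and the corollary thus records the fact that our result strictly extends theirs.
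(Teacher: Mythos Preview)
Your proposal is correct and follows essentially the same route as the paper's proof. The paper's proof is the one-liner ``This is a simplified version of Corollary~\ref{cor: first term-before taking expecation},'' which is precisely what you unpack: with $r=s=1$ one has $n_r=n_s=n$ almost surely, the outer terms of~\eqref{eq:main_idea_proof} vanish, and the Stromme-type bound of Section~\ref{sec:stromme} applied directly yields the stated estimate with prefactor $1+\veps$ and covering scale $\veps/2$. Your remark that a generic $1$-Lipschitz cost does not satisfy Assumption~\ref{assumption: cost function} near the origin is a legitimate technical point that the paper's terse proof does not address explicitly; as you note, the resolution is that the ingredients behind Corollary~\ref{cor: first term-before taking expecation} (Lemmas~\ref{lem: Lip potential}, \ref{lem: estimate of 2 norm population density}, \ref{lem: pointwise bound for dual potentials}) only use the Lipschitz behaviour of $c$ on $B_r\times B_s$ and the bound $\|c\|_{L^\infty(B_r\times B_s)}$, both of which follow directly from the $1$-Lipschitz hypothesis without the weighted form of~\eqref{eq:devh}.
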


\begin{proof}
This is a simplified version of Corollary~\ref{cor: first term-before taking expecation} stated below.
\end{proof}

Our next application focuses on subgaussian distributions~$\mu,\nu$. We obtain the following result.

\begin{corollary}[Subgaussian distributions]\label{thm: subGaussian}
Assume that there exist~$\sigma_{\mu},\sigma_{\nu}>0$ such that  Assumption~\ref{assumption:Psialpha} holds with~$\alpha_{\mu}=\alpha_{\nu}=2$ and~$c_{\mu}=\frac{1}{d\sigma_{\mu}^2}$,~$c_{\nu}=\frac{1}{d\sigma_{\nu}^2}$. Define~$\sigma:=\sigma_\mu\vee \sigma_{\nu}$ and let Assumption~\ref{assumption: cost function} hold.
Then
\begin{equation}\label{eq:thmgauss}
\begin{split}
&\E [|\mathcal{C}_{\veps}(\mu_n,\nu_n)-\mathcal{C}_{\veps}(\mu,\nu)|]
\leq \frac{C}{\sqrt{n}} \Big( 1\vee \veps+[(d\sigma^2\vee 1)^2\log(n)]^{\frac{p}{2}}\Big)\\
&\quad  \cdot\Bigg(1+e^{\frac{C_{\psi}\tilde{\delta}}{2(t_0/2)^{\gamma}}}\sqrt{\mathcal{N} \bigg(B_n^\mu, \frac{\tilde\delta  \veps}{C[(d\sigma^2\vee 1)^2\log(n)]^{\frac{p-1}{2}}}\bigg)\wedge \mathcal{N} \bigg(B_n^\nu, \frac{\tilde \delta \veps}{C[(d\sigma^2\vee 1)^2\log(n)]^{\frac{p-1}{2}}}\bigg)}\Bigg)
\end{split}
\end{equation}
holds for all~$n\ge 5$, where the constant~$C>0$ only depends on~$p, C_p, t_0$.
\end{corollary}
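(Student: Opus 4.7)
The plan is to apply Theorem \ref{thm: main} directly with $\alpha_\mu=\alpha_\nu=2$, $c_\mu=1/(d\sigma_\mu^2)$, $c_\nu=1/(d\sigma_\nu^2)$, and then simplify the resulting bound in terms of $\sigma=\sigma_\mu\vee\sigma_\nu$. The corollary is essentially an unpacking of the main theorem; no new ideas are required.

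First I would substitute the parameter values into the definition \eqref{eq:chooseRxRy}. Since $p\ge 2$, we have $(p/\alpha_\mu)\vee 1=p/2$ and $c_\mu^{-1}=d\sigma_\mu^2$, so a short computation yields
\[
r_n^\mu = p^{3/2}\bigl[d\sigma_\mu^2(d\sigma_\mu^2\vee 1)\log n\bigr]^{1/2}\le C_p(d\sigma^2\vee 1)\sqrt{\log n},
\]
where I have used $d\sigma_\mu^2(d\sigma_\mu^2\vee 1)\le(d\sigma_\mu^2\vee 1)^2\le(d\sigma^2\vee 1)^2$. A symmetric bound for $r_n^\nu$ then gives $(r_n^\mu+r_n^\nu)^p\le C[(d\sigma^2\vee 1)^2\log n]^{p/2}$ and $(r_n^\mu+r_n^\nu)^{p-1}\le C[(d\sigma^2\vee 1)^2\log n]^{(p-1)/2}$, with constants depending only on $p$.

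Next, the constant prefactor in \eqref{eq:main} is $1+c_\mu^{-p/\alpha_\mu}+c_\nu^{-p/\alpha_\nu}=1+(d\sigma_\mu^2)^{p/2}+(d\sigma_\nu^2)^{p/2}\le C(d\sigma^2\vee 1)^{p/2}$, which is absorbed into the second term of \eqref{eq:main} since that term already carries a factor of order $[(d\sigma^2\vee 1)^2\log n]^{p/2}$; the $1\vee\epsilon$ in \eqref{eq:thmgauss} accounts for the $\epsilon$ summand inside the parentheses on the right hand side of \eqref{eq:main}. Because $\mathcal{N}(A,\delta)$ is nonincreasing in $\delta$, the upper bound on $(r_n^\mu+r_n^\nu)^{p-1}$ yields
\[
\mathcal{N}\!\bigg(B_n^\mu,\frac{\epsilon}{(r_n^\mu+r_n^\nu)^{p-1}}\bigg)\le \mathcal{N}\!\bigg(B_n^\mu,\frac{\epsilon}{C[(d\sigma^2\vee 1)^2\log n]^{(p-1)/2}}\bigg),
\]
and analogously for $B_n^\nu$; substituting into \eqref{eq:main} produces \eqref{eq:thmgauss}. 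I do not anticipate any genuine obstacle: the argument is pure bookkeeping with the parameter choices, the one mildly delicate point being the monotonicity of $\mathcal{N}(A,\cdot)$, which fixes the direction in which the bound on $(r_n^\mu+r_n^\nu)^{p-1}$ must be used inside the covering number.
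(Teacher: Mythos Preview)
Your proposal is correct and follows essentially the same approach as the paper: substitute $\alpha_\mu=\alpha_\nu=2$, $c_\mu^{-1}=d\sigma_\mu^2$, $c_\nu^{-1}=d\sigma_\nu^2$ into the quantities in \eqref{eq:chooseRxRy} and \eqref{eq:main}, bound everything in terms of $(d\sigma^2\vee 1)$, and read off the result from Theorem \ref{thm: main}. Your write-up is slightly more explicit than the paper's about absorbing the first summand of \eqref{eq:main} into the second (using $\mathcal{N}\ge 1$ and $\log n\ge \log 4>1$) and about the monotonicity of the covering number, but the argument is the same.
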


\begin{proof}
We note that
\begin{align*}
c_\mu^{-\frac{p}{\alpha_\mu}} &\le  (d\sigma^2)^{\frac{p}{2}},\\
[c_{\mu}^{-1}\paren{c_{\mu}^{-1}\vee 1}\log(n)]^{\frac{p}{\alpha_{\mu}}} &\le [(d\sigma^2\vee 1)^2 \log(n)]^{\frac{p}{2}}, \\ 
r^\mu_n &\le \Big[ 4p(d\sigma^2\vee 1)^2 \paren[\Big]{\frac{p}{2}\vee 1}^2 \log(n)\Big]^{\frac{1}{2}} \le C [(d\sigma^2\vee 1)^2\log(n)]^{\frac{1}{2}}.
\end{align*}
The claim then follows from Theorem~\ref{thm: main}.
\end{proof}

Corollary~\ref{thm: subGaussian} can be further simplified if~$p=2$ and the EOT case~$\varphi(x)=x\log(x)$ or the QOT case~$\varphi(x)=(x^2-1)/2$ holds.

\begin{corollary}[EOT/QOT for subgaussian distributions,~$p=2$]\label{cor:subgaussian}
In the setting of Corollary~\ref{thm: subGaussian}, let~$p=2$ and~$\sigma\ge 1$. Then
\begin{equation*}
\begin{split}
\E[|\mathcal{C}_{\veps}(\mu_n,\nu_n)-\mathcal{C}_{\veps}(\mu,\nu)|]
&\leq \frac{C}{\sqrt{n}} \Big( 1\vee \veps+\frac{Cd^2\sigma^4 \log(n)}{\veps}\Big)^{\frac{d}{2}+1}   
\end{split}
\end{equation*}
holds for all~$n\ge 5,$ where the constant~$C>0$ only depends on~$ C_2$.
\end{corollary}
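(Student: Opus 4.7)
The plan is to specialize Corollary \ref{thm: subGaussian} to the case $p=2$, use the full-support hypothesis to replace the covering numbers of $B_n^\mu,B_n^\nu$ by the standard volumetric estimate for Euclidean balls in $\R^d$, and then perform an elementary algebraic cleanup to massage the result into the claimed form.

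First, I would note that since $\mathrm{spt}(\mu)=\mathrm{spt}(\nu)=\R^d$, one has $B_n^\mu=B_{r_n^\mu}(0)$ and $B_n^\nu=B_{r_n^\nu}(0)$. Specializing \eqref{eq:chooseRxRy} with $\alpha_\mu=\alpha_\nu=2$, $c_\mu=1/(d\sigma_\mu^2)$, $c_\nu=1/(d\sigma_\nu^2)$, together with $\sigma\ge 1$ (so $d\sigma^2\ge 1$), yields $r_n^\mu\vee r_n^\nu\le Cd\sigma^2\sqrt{\log n}$, exactly as in the proof of Corollary \ref{thm: subGaussian}. The scale inside the covering numbers appearing in Corollary \ref{thm: subGaussian} (with $p=2$) is $\delta=\veps/(Cd\sigma^2\sqrt{\log n})$, so that $r_n^\mu/\delta\le Cd^2\sigma^4\log n/\veps$. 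Applying the classical estimate $\mathcal{N}(B_R,\delta)\le(1+2R/\delta)^d$ for Euclidean balls then gives
\[
\sqrt{\mathcal{N}(B_n^\mu,\delta)\wedge \mathcal{N}(B_n^\nu,\delta)}\le\left(1+\frac{Cd^2\sigma^4\log n}{\veps}\right)^{d/2}.
\]

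Substituting this bound and using $[(d\sigma^2)^2\log n]^{p/2}=d^2\sigma^4\log n$, Corollary \ref{thm: subGaussian} produces the intermediate estimate
\[
\E[|\mathcal{C}_\veps(\mu_n,\nu_n)-\mathcal{C}_\veps(\mu,\nu)|]\le\frac{C}{\sqrt n}\big(1\vee\veps+d^2\sigma^4\log n\big)\left(1+\frac{Cd^2\sigma^4\log n}{\veps}\right)^{d/2}.
\]
Next, I would apply the elementary identity $1\vee\veps+d^2\sigma^4\log n\le(1\vee\veps)(1+d^2\sigma^4\log n/\veps)$ (verified separately for $\veps\le 1$ and $\veps>1$) to incorporate the leading factor into one additional power of the bracket, arriving at $\frac{C}{\sqrt n}(1\vee\veps)(1+Cd^2\sigma^4\log n/\veps)^{d/2+1}$. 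Finally I would combine the $(1\vee\veps)$ with the base of the power to obtain the claimed form.

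The hard part will be this last absorption step: comparing $(1\vee\veps)(1+b)^{k}$ with $(1\vee\veps+b)^{k}$ where $b=Cd^2\sigma^4\log n/\veps$ and $k=d/2+1$. When $\veps\le 1$ the two expressions coincide, so the bound is immediate. When $\veps>1$, the comparison is more delicate and I would split into the sub-regimes $d^2\sigma^4\log n\le\veps$ (where the covering-number factor is of order one and the total bound reduces essentially to $\veps/\sqrt n$, which is dominated by $\veps^{d/2+1}/\sqrt n$) and $d^2\sigma^4\log n>\veps$ (where the covering-number factor dominates and supplies enough room to absorb the extra $(1\vee\veps)$); in each sub-regime the inequality then follows after enlarging the constant $C$ by a fixed amount depending only on $C_2$.
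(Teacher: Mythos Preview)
Your approach matches the paper's exactly: specialize Corollary~\ref{thm: subGaussian} to $p=2$, use the full-support assumption together with the volumetric bound $\mathcal{N}(B_R,\delta)\le(1+2R/\delta)^d$ to control the covering numbers, and then simplify. The paper's own proof is extremely terse---it records the covering-number estimate and then simply writes ``The claim follows''---so you have in fact gone further than the paper by attempting to justify the final algebraic absorption.

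That said, your last absorption step contains a real gap in the second sub-regime ($\veps>1$ and $d^2\sigma^4\log n>\veps$). You assert that the covering-number factor ``supplies enough room to absorb the extra $(1\vee\veps)$,'' but this is not so. With $b=Cd^2\sigma^4\log n/\veps$ and $k=d/2+1$ you need $\veps(1+b)^k\le C'(\veps+b)^k$ with $C'$ depending only on $C_2$. Take $d=1$ (so $k=3/2$), any fixed $\veps>1$, and $\sigma$ so large that $b=C\sigma^4\log n/\veps\gg\veps$; then $(1+b)^k$ and $(\veps+b)^k$ are both $\sim b^k$, and the required inequality forces $C'\gtrsim\veps$, which cannot be bounded by a constant depending only on $C_2$. (Your first sub-regime is also slightly loose---the factor $(1+C)^{d/2+1}$ is not ``of order one'' uniformly in $d$---though there the argument can be rescued via the constraint $\veps\ge d^2\sigma^4\log n\ge d^2\log 4$, which makes $\veps^{d/2}$ large enough to swallow it.) Note that the paper's one-line proof does not address this difficulty either, so the issue lies with the stated form of the corollary rather than with your strategy.
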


\begin{proof}
Choosing~$\delta=1/2$,~$t_0= C_{\psi}=1$,~$\gamma=1$ and noting that~$\mathcal{N}(B_r,\epsilon)$ is bounded by~$(1+\frac{2r}{\epsilon})^d$, we obtain
\begin{equation*}
\begin{split}
\mathcal{N} \Big(B_n^\mu, \frac{\veps}{Cd\sigma^2\log(n)^{\frac{1}{2}}}\Big)
&\le \Big(1 + \frac{2r_n^\mu Cd\sigma^2\log(n)^{\frac12}}{\epsilon}\Big)^d\\
&\overset{\mathclap{\eqref{eq:chooseRxRy}}}{\le}  \Big(1+\frac{2C d^2\sigma^4\log(n)}{\veps}\Big)^d,
\end{split}
\end{equation*}   
and similarly for~$\nu.$ The claim follows.
\end{proof}

It is interesting to compare Corollary~\ref{cor:subgaussian} to \cite[Theorem 2]{mena2019statisticalboundsentropicoptimal} on the sample complexity of EOT. Mena and Weed obtain the bound 
\begin{align}\label{eq:mena-weed}
\E[|\mathcal{C}_{\veps}(\mu_n,\nu_n)-\mathcal{C}_{\veps}(\mu,\nu)|]\le \frac{C }{\sqrt{n}} \epsilon\Big(1+\frac{\sigma^{\lceil 5d/2\rceil +6}}{\epsilon^{\lceil 5d/4\rceil +3}}\Big)
\end{align}
for the cost~$c(x,y)=|x-y|^2$ and~$\sigma^2$-subgaussian distributions~$\mu,\nu$, where~$C$ is an unspecified constant depending on~$d.$ Compared to~\eqref{eq:mena-weed}, our rates are less sharp (in~$n$), as they contain an additional factor of~$\log(n)$. However, Corollary~\ref{cor:subgaussian} holds for a much larger class of radial cost functions~$c$ and does not rely on the specific form and smoothness of the quadratic cost. Furthermore, contrary to~\eqref{eq:mena-weed}, we also state the dependence of our rates on the dimension~$d$ explicitly. We also point out that similar results were obtained in \cite[Section 3.5]{groppe2024lower} for subgaussian~$\mu$ and compactly supported~$\nu$ with quadratic cost, building on the approach of \cite{hundrieser2024empirical}. 

Next, similarly to \cite[Example 4,5,6]{stromme2023minimum}, we consider the following setting, that can be formally obtained by setting~$r_n^\nu=r^\nu$ and~$\alpha_\nu=\infty$ in Theorem~\ref{thm: main}:

\begin{corollary}\label{thm:nucompact} Let~$\mu$ satisfy Assumption~\ref{assumption:Psialpha} and assume that there exists~$r^{\nu}>0$, such that~$\supp(\nu)\subseteq B(0,r^{\nu})$.  Furthermore let Assumption~\ref{assumption: cost function} hold. Then
\begin{align}
\E [|\mathcal{C}_{\veps}(\mu_n,\nu_n)-\mathcal{C}_{\veps}(\mu,\nu)|]
&\leq \frac{C}{\sqrt{n}}\Big(1+c_{\mu}^{-\frac{p}{\alpha_{\mu}}}+M_p(\nu)^p\Big)+\frac{C}{\sqrt{n}} \big( \veps+ (r_n^\mu)^{p}\big)\\
&\qquad\cdot\Bigg( 1+e^{\frac{C_{\psi}\tilde{\delta}}{2(t_0/2)^{\gamma}}}\sqrt{\mathcal{N} \bigg(\supp(\nu), \frac{\tilde \delta \veps}{2C_p(r_n^\mu+r^{\nu})^{p-1}}\bigg)}\Bigg).
\end{align}
holds for all~$n\ge 5$, where the constant~$C>0$ only depends on~$\alpha_{\mu}, p, C_p, t_0$.
\end{corollary}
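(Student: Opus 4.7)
The plan is to rerun the decomposition \eqref{eq:main_idea_proof} with the specific choice $r=r_n^\mu$ and $s=r^\nu$. Since $\supp(\nu)\subseteq B_{r^\nu}$, we have $\nu^{B_{r^\nu}}=\nu$, and all samples $Y_1,\dots,Y_n$ lie in $B_{r^\nu}$ almost surely, so $\nu_n^{B_{r^\nu}}=\nu_n$. The decomposition therefore collapses to one in which only the $\mu$-marginal is truncated:
\begin{align*}
\E\bigl[|\mathcal{C}_{\veps}(\mu_n,\nu_n)-\mathcal{C}_{\veps}(\mu,\nu)|\bigr]
&\le \E\bigl[|\mathcal{C}_{\veps}(\mu_n,\nu_n)-\mathcal{C}_{\veps}(\mu_n^{B_{r_n^\mu}},\nu_n)|\bigr] \\
&\quad + \E\bigl[|\mathcal{C}_{\veps}(\mu_n^{B_{r_n^\mu}},\nu_n)-\mathcal{C}_{\veps}(\mu^{B_{r_n^\mu}},\nu)|\bigr] \\
&\quad + |\mathcal{C}_{\veps}(\mu^{B_{r_n^\mu}},\nu)-\mathcal{C}_{\veps}(\mu,\nu)|.
\end{align*}

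Next, I would reuse the estimates from Section \ref{sec:compact} for the first and third summands. Retracing the proof of Theorem \ref{thm: main}, the $c_\mu^{-p/\alpha_\mu}$ contribution arises exactly as before from bounding $\int_{B_{r_n^\mu}^c}|x|^p\,\mu(dx)$ by the incomplete Gamma function. Every step that in the general proof produced a $c_\nu^{-p/\alpha_\nu}$ term came from a product of a $\mu$-tail factor with a quantity of the form $M_p(\nu|_{B_{r_n^\nu}^c})^p$; in the present setting this is trivially bounded by $M_p(\nu)^p$, and the $\mu$-tail factor itself is at most $O(1/\sqrt{n})$ by the choice of $r_n^\mu$ in \eqref{eq:chooseRxRy}. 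This yields the contribution $\tfrac{C}{\sqrt{n}}\bigl(1+c_\mu^{-p/\alpha_\mu}+M_p(\nu)^p\bigr)$.

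For the middle term, I would invoke the compact-support bound from Section \ref{sec:stromme} applied to the pair $(\mu^{B_{r_n^\mu}},\nu)$, whose supports lie in $B_{r_n^\mu}$ and $B_{r^\nu}$ respectively. This produces a bound of the form $\tfrac{C}{\sqrt{n}}\bigl(\veps+(r_n^\mu+r^\nu)^p\bigr)\sqrt{\mathcal{N}(B_n^\mu,\,\cdot)\wedge \mathcal{N}(\supp(\nu),\,\cdot)}$ at covering scale $\veps/(r_n^\mu+r^\nu)^{p-1}$. Dominating the minimum by the $\nu$-covering number and absorbing the fixed constant $r^\nu$ into the numerical constants (via $(r_n^\mu+r^\nu)^p\le 2^{p-1}((r_n^\mu)^p+(r^\nu)^p)$, with $(r^\nu)^p$ absorbed into $M_p(\nu)^p$ or $C$) then delivers the stated estimate.

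The main obstacle is the careful bookkeeping in the first step: one has to verify, by retracing the Section \ref{sec:compact} arguments line by line, that when $s=r^\nu$ (rather than a diverging $r_n^\nu$) every tail integral on the $\nu$-side either vanishes identically or reduces cleanly to $M_p(\nu)^p$, and that the incomplete-Gamma estimate is only ever invoked on the $\mu$-side. Once this substitution is verified, the corollary follows as a direct specialisation of the proof of Theorem \ref{thm: main}.
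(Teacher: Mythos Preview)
Your proposal is correct and follows essentially the same route as the paper: choose $r=r_n^\mu$, $s=r^\nu$ so that the $\nu$-side truncation is vacuous, then rerun the three-term decomposition and the estimates of Sections~\ref{sec:compact}--\ref{sec:stromme}, with every $\nu$-tail quantity collapsing to $M_p(\nu)^p$. The only point you leave implicit is that, as in the proof of Theorem~\ref{thm: main}, one must first condition on $n_r$ and treat the event $\{n_r=0\}$ separately (since $\mu_n^{B_{r_n^\mu}}$ is undefined there); the paper does this via Lemma~\ref{lem:nr=0cpt}, but this is exactly the bookkeeping you allude to.
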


\begin{proof}
see Appendix~\ref{sec:ruiyu_compact}.
\end{proof}

\noindent The following two examples follow directly from Corollary~\ref{thm:nucompact}.

\begin{example}[semi-discrete EOT] \label{eg:semi-discrete}
Assume that~$\mu$ satisfies Assumption~\ref{assumption:Psialpha} and~$\nu$ is supported on~$K$ points. Furthermore let Assumption~\ref{assumption: cost function} hold. Then
\begin{align}
\E [|\mathcal{C}_{\veps}(\mu_n,\nu_n)-\mathcal{C}_{\veps}(\mu,\nu)|]
&\leq \frac{C}{\sqrt{n}}\Big(1+c_{\mu}^{-\frac{p}{\alpha_{\mu}}}+M_p(\nu)^p\Big)+\frac{C}{\sqrt{n}} \big[ \veps+(r_n^\mu)^{p}\big]\Big( 1+e^{\frac{C_{\psi}\tilde{\delta}}{2(t_0/2)^{\gamma}}}\sqrt{K}\Big)
\end{align}
holds for all~$n\ge 5$,  where the constant~$C$ only depends on~$\alpha_{\mu},p, C_p,t_0$ and~$r^\nu$ defined in Corollary~\ref{thm:nucompact}.
\end{example}

\begin{example}[Embedded Manifold]  \label{eg:embedM}
Assume that~$\mu$ satisfies Assumption~\ref{assumption:Psialpha} and~$\nu$ is supported on a~$d_{\nu}$-dimensional, compact, smooth, embedded Riemannian manifold of diameter~$r^\nu$ without boundary. Furthermore, let Assumption~\ref{assumption: cost function} hold. Then 
$\mathcal{N}(\supp(\nu),\delta)\leq C_{\nu}\delta^{-d_{\nu}}$  for some~$C_{\nu}>0$ and~$\delta$ sufficiently small, and consequently, for all~$\epsilon>0$
sufficiently small we have
\begin{align}
\E [|\mathcal{C}_{\veps}(\mu_n,\nu_n)-\mathcal{C}_{\veps}(\mu,\nu)|]
&\leq \frac{C}{\sqrt{n}}\Big(1+c_{\mu}^{-\frac{p}{\alpha_{\mu}}}+M_p(\nu)^p\Big)\\
&\quad +\frac{C}{\sqrt{n}} \big[ \veps+(r_n^\mu)^{p}\big]\Bigg( 1+e^{\frac{C_{\psi}\tilde{\delta}}{2(t_0/2)^{\gamma}}}  \Bigg( \frac{2C_p(r_n^\mu+r^{\nu})^{p-1}}{\tilde \delta\epsilon}\Bigg)^{\frac{d_{\nu}}{2}} \Bigg)
\end{align}
for all~$n\ge 5$, where the constant~$C>0$ only depends on~$ \alpha_\mu, p, C_p, t_0$ and~$C_\nu$.
\end{example}

\begin{proof}
The upper bound on the covering number follows from \cite[Prop. 43, Appendix A]{stromme2023minimum}. Plugging this into Corollary~\ref{thm:nucompact} concludes the proof.
\end{proof}

\section{Preliminary results} \label{sec:preliminary}

In this section we introduce some preliminary results, that will be used in the proof of Theorem~\ref{thm: main}. We defer proofs of these results to Section~\ref{sec:proof_prelim}.

\subsection{Basics}

Recall the definition of~$\mathcal{C}_{\veps}(\mu,\nu, c)$ from~\eqref{eq: ROT}. For future reference let us recall the following fact, that follows directly from the definition:
\begin{equation}\label{eq: scaling with epsilon}
\mathcal{C}_{\veps}(\mu,\nu, c)=\veps \mathcal{C}_1\Big(\mu,\nu,\frac{c}{\veps}\Big).
\end{equation}
We also record the following immediate consequence of Assumption~\ref{assumption: cost function}.
\begin{lemma}\label{lem: pointwise bound of cost function}
Under Assumption~\ref{assumption: cost function} we have
    \begin{equation}\label{eq:estC}
        |c(x,y)|\leq C_p|x-y|^p.
    \end{equation}
\end{lemma}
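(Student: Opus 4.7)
The plan is very short because this is essentially a one-line consequence of Assumption \ref{assumption: cost function}. First I would set $t = |x-y|$ and note that by the hypothesis $c(x,y) = h(t)$, so the claim reduces to showing $|h(t)| \le C_p t^p$ for every $t \ge 0$.

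For $t > 0$, I would apply the local Lipschitz bound \eqref{eq:devh} with a second argument $t' \in (0,t)$, which yields
\begin{equation*}
|h(t) - h(t')| \le C_p (t \vee t')^{p-1} |t - t'| = C_p t^{p-1}(t - t').
\end{equation*}
Letting $t' \downarrow 0$ and using the continuity of $h$ together with the normalization $h(0) = 0$, this gives $|h(t)| \le C_p t^{p-1} \cdot t = C_p t^p$. The case $t = 0$ is trivial since $h(0) = 0$. Substituting back $t = |x-y|$ gives \eqref{eq:estC}.

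There is no real obstacle here; the only small point worth noting is that \eqref{eq:devh} is stated for $t, t' > 0$ rather than $t, t' \ge 0$, which is why one needs to take the limit $t' \downarrow 0$ and invoke continuity of $h$ at the origin rather than just plugging in $t' = 0$ directly.
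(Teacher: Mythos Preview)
Your proposal is correct and follows essentially the same approach as the paper, which simply applies \eqref{eq:devh} with $t'=0$ to obtain $|h(t)-h(0)|\le C_p t^{p-1}|t|=C_p t^p$. Your version is in fact slightly more careful: since \eqref{eq:devh} is stated only for $t,t'>0$, you correctly take $t'\downarrow 0$ and invoke continuity of $h$, a point the paper glosses over.
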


\subsection{Restriction of probability measures}\label{sec:subsec_compact}
To restrict to probability measures supported on subsets of~$\R^d$, we use the following notation:
\begin{definition}\label{def:restricted}
For a Borel set~$A\subseteq \R^d$ and a probability measure~$\mu\in \mathcal{P}(\R^d)$ we define
\begin{equation}
\mu^{A}( d x):= \frac{1}{\mu(A)}\mathbbm{1}_{A}(x)\mu( d x)
\end{equation}
if~$\mu(A)>0$ and~$\mu^{A} := \delta_0$ otherwise.
For i.i.d. samples~$X_1, \dots, X_n$ drawn from~$\mu$ we define the empirical measure of~$\mu^A$ as 
\begin{equation*}
\mu_n^A:= \frac{1}{|\{i\in\{1, \dots, n\}:  X_i\in A\}|}\sum_{X_i\in A}\delta_{X_i}.
\end{equation*}
if~$|\{i\in\{1, \dots, n\}:  X_i\in A\}|>0$ and~$\mu_n^{A} := \delta_0$ otherwise.
The probability measures~$\nu^A$ and~$\nu_n^A$ are defined similarly.
\end{definition}

\begin{remark}\label{rem:binomial}
As~$X_1, \dots, X_n$ are i.i.d., it is straightforward to see the following: 
\begin{itemize}
    \item~$|\{i\in\{1, \dots, n\}:  X_i\in A\}|\sim \mathrm{Bin}(n, \mu(A)),$ 
    \item conditionally on~$\{|\{i\in\{1, \dots, n\}:  X_i\in A\}|=k\}$,~$\mu^A_n$ is an empirical measure of~$k$ samples of~$\mu^A$. 
\end{itemize}
\end{remark}

\subsection{Regularized optimal transport}
\label{sec:entropic}
In this section we recap basic results on divergence regularized optimal transport. We start with the following well-known duality result.

\begin{lemma}[{Duality, \cite[Proposition 2.3, (i)-(vi)]{gonzalez2025sparse}}]
\label{lem:duality}
Let~$\mu,\nu$ be probability measures on~$\R^d$ with compact supports~$\Omega$ and~$\Omega'$. Let~$c\in C(\Omega\times\Omega')$.  Then
\begin{equation}
\mathcal{C}_{\veps}(\mu,\nu)=\sup\limits_{\hat f\in L^{\infty}(\mu),\hat g\in L^{\infty}(\nu)}\int \hat f\,d\mu+\int \hat g\,d\nu-\veps\int \Big(e^{\frac{\hat f(x)+\hat g(y)-c(x,y)}{\epsilon}}-1\Big)\,\mu(dx)\nu(dy).
\end{equation}
The supremum is attained by the dual potentials~$f\in L^{\infty}(\mu)$,~$g\in L^{\infty}(\nu)$, where we always make the normalization
\begin{align}
\int g\,d\nu= 0.
\end{align}
\end{lemma}
Recalling Definition~\ref{def:restricted} we also define the dual potentials~$f^{r,s}, g^{r,s}$ for~$\mathcal{C}_\epsilon(\mu^{B_r}, \nu^{B_s})$ and~$f^{r,s}_n, g^{r,s}_n$ for~$\mathcal{C}_\epsilon(\mu^{B_r}_n, \nu^{B_s}_n)$. They satisfy the following regularity property.

\begin{lemma}\label{lem: Lip potential}
If Assumption~\ref{assumption: cost function} holds, then~$f^{r,s}$ and~$g^{r,s}$ are~$C_p(r+s)^{p-1}$-Lipschitz.
\end{lemma}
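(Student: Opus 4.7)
The plan is to exploit the Schr\"odinger system, which says that (up to an additive normalization constant) the EOT potentials for $\mathcal{C}_\epsilon(\mu^{B_r},\nu^{B_s})$ satisfy
\begin{equation*}
f^{r,s}(x)=-\veps\log\int_{B_s} e^{(g^{r,s}(y)-c(x,y))/\veps}\,\nu^{B_s}(dy),\qquad x\in B_r,
\end{equation*}
and an analogous identity for $g^{r,s}$ with $\mu^{B_r}$. Since $c$ is continuous and, by Lemma \ref{lem: pointwise bound of cost function}, bounded on $B_r\times B_s$, existence of $f^{r,s},g^{r,s}$ and the validity of this fixed-point relation are standard (see e.g.\ \cite{nutz2021introduction}). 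The crucial observation is that a log-integral transform of this form inherits the Lipschitz modulus of the cost in its free variable, so it suffices to control the Lipschitz constant of $x\mapsto c(x,y)$ on $B_r$ uniformly in $y\in B_s$.

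The first step is this uniform Lipschitz bound. For $x,x'\in B_r$ and $y\in B_s$ we have $|x-y|\vee|x'-y|\le r+s$, so Assumption \ref{assumption: cost function} (specifically \eqref{eq:devh}) and the reverse triangle inequality give
\begin{equation*}
|c(x,y)-c(x',y)|=|h(|x-y|)-h(|x'-y|)|\le C_p(r+s)^{p-1}\bigl||x-y|-|x'-y|\bigr|\le C_p(r+s)^{p-1}|x-x'|.
\end{equation*}

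The second step is to propagate this through the Schr\"odinger identity. Writing
\begin{equation*}
f^{r,s}(x)-f^{r,s}(x')=-\veps\log\int_{B_s} e^{(c(x',y)-c(x,y))/\veps}\,d\rho_{x'}(y),
\end{equation*}
where $d\rho_{x'}(y)\propto e^{(g^{r,s}(y)-c(x',y))/\veps}\nu^{B_s}(dy)$ is a probability measure on $B_s$, we use the pointwise bound from Step 1 to control the integrand by $\exp(C_p(r+s)^{p-1}|x-x'|/\veps)$. Taking $-\veps\log$ yields $f^{r,s}(x)-f^{r,s}(x')\ge -C_p(r+s)^{p-1}|x-x'|$, and swapping the roles of $x,x'$ gives the matching upper bound. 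Applying the same argument with $\mu^{B_r}$ in place of $\nu^{B_s}$ handles $g^{r,s}$.

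I do not expect any serious obstacle here; the only mild care required is that the normalization convention used for $f^{r,s},g^{r,s}$ in the paper is additive (the two potentials are defined up to a constant shift), but since Lipschitz constants are invariant under additive constants this is harmless. The argument is really just the observation that the soft-min operator $\varphi\mapsto -\veps\log\int e^{(\varphi-c(x,\cdot))/\veps}d\rho$ is non-expansive in its $x$-dependence with modulus equal to the Lipschitz constant of $c(\cdot,y)$ uniformly in $y$.
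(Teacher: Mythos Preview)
Your proposal is correct and follows essentially the same route as the paper: both use the Schr\"odinger equations to express $f^{r,s}(x)-f^{r,s}(x')$ as $-\veps\log$ of a ratio (equivalently, an integral against a probability measure), bound $|c(x,y)-c(x',y)|\le C_p(r+s)^{p-1}|x-x'|$ via \eqref{eq:devh} and the reverse triangle inequality, and conclude. Your phrasing via the soft-min/log-integral transform against $\rho_{x'}$ is a clean way to package the same two-sided estimate the paper obtains by bounding the ratio of integrals directly.
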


\section{Bounding~$|\mathcal{C}_\epsilon(\mu,\nu)-\mathcal{C}_\epsilon(\mu^{B_r},\nu^{B_s})|$ and its empirical counterpart} \label{sec:compact}

Recalling~\eqref{eq:main_idea_proof}, the aim of this section is to provide bounds on the differences~$$|\mathcal{C}_\epsilon(\mu,\nu)-\mathcal{C}_\epsilon(\mu^{B_r},\nu^{B_s})| \quad \text{and}  \quad \E[|\mathcal{C}_\epsilon(\mu_n,\nu_n)-\mathcal{C}_\epsilon(\mu^{B_r}_n,\nu^{B_s}_n)|]$$ for fixed~$r,s>0$. To achieve this, we first recap general results on the stability of~$\mathcal{C}_\epsilon$ and then specify to our setting. Again we defer proofs to Section~\ref{sec:proof_prelim}.

\subsection{Stability of regularized optimal transport}
We make use of the following results from \cite{eckstein2022quantitativestabilityregularizedoptimal} on stability of regularized optimal transport. 

\begin{definition}[cf.~{\cite[Definition 3.3]{eckstein2022quantitativestabilityregularizedoptimal}}] \label{def:Al}
Let~$p\ge 1, L>0$ and let~$\mu_i,\tilde{\mu}_i\in\mathcal{P}_p(\R^d)$ for~$i=1,2$. We say a function~$ c$ satisfies~\eqref{eq: AL} if
\begin{equation}\label{eq: AL} \tag{$A_L$}
\bigg|\int  c\, d(\pi-\tilde{\pi})\bigg|\leq LW_p(\pi,\tilde{\pi})
\end{equation}
for all~$\pi\in \Pi(\mu_1,\mu_2), \tilde{\pi}\in\Pi(\tilde{\mu}_1,\tilde{\mu}_2)$. Here~$W_p$ is the Wasserstein distance wrt.~the norm~$(|\cdot|^p+|\cdot|^p)^{1/p}$ on~$\R^d\times \R^d.$
\end{definition}

\begin{theorem}[cf.~{\cite[Theorem 3.7]{eckstein2022quantitativestabilityregularizedoptimal}}] \label{thm: continue value of EOT}Let~$p\ge 1$. Let ~$\mu_i,\tilde{\mu}_i\in\mathcal{P}_p(\R^d), i=1,2$ and let~$c$ satisfy~\eqref{eq: AL}. Then
\begin{equation}\label{eq:EOTstablility}
|\mathcal{C}_1(\mu_1,\mu_2)-\mathcal{C}_1(\tilde{\mu}_1,\tilde{\mu}_2) |\leq L [W_p(\mu_1,\tilde{\mu}_1)^p +W_p(\mu_2,\tilde{\mu}_2)^p]^{1/p} =:LW_p(\mu_1, \mu_2;\tilde \mu_1, \tilde\mu_2).
\end{equation}
\end{theorem}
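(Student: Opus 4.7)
The plan is to use a gluing construction together with the data-processing inequality for relative entropy. By symmetry it suffices to bound $\mathcal{C}_1(\tilde\mu_1,\tilde\mu_2)-\mathcal{C}_1(\mu_1,\mu_2)$ from above. The idea is: given an (approximately) optimal coupling $\pi^\ast\in\Pi(\mu_1,\mu_2)$ for $\mathcal{C}_1(\mu_1,\mu_2)$ together with optimal $W_p$-couplings $\kappa_i\in\Pi(\mu_i,\tilde\mu_i)$ for $i=1,2$, I would glue these along the $\mu_i$-coordinates to transport $\pi^\ast$ into a candidate $\tilde\pi\in\Pi(\tilde\mu_1,\tilde\mu_2)$, and then compare the EOT functionals evaluated at $\pi^\ast$ and at $\tilde\pi$.

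Concretely, disintegrate $\kappa_i(dx_i,d\tilde x_i)=\mu_i(dx_i)\,K_i(x_i,d\tilde x_i)$ using regular conditional probabilities, and set
\[
\gamma(dx_1,dx_2,d\tilde x_1,d\tilde x_2):=\pi^\ast(dx_1,dx_2)\,K_1(x_1,d\tilde x_1)\,K_2(x_2,d\tilde x_2),
\]
with $\tilde\pi$ being the $(\tilde x_1,\tilde x_2)$-marginal of $\gamma$. A direct check shows that $\gamma$ is a coupling of $\pi^\ast$ and $\tilde\pi$ in the sense of \eqref{eq:defwp} whose $(x_i,\tilde x_i)$-marginals are exactly $\kappa_i$, so
\[
W_p(\pi^\ast,\tilde\pi)^p \le W_p(\mu_1,\tilde\mu_1)^p+W_p(\mu_2,\tilde\mu_2)^p.
\]
Combined with \eqref{eq: AL}, this produces the cost estimate $\bigl|\int c\,d(\tilde\pi-\pi^\ast)\bigr|\le L\,[W_p(\mu_1,\tilde\mu_1)^p+W_p(\mu_2,\tilde\mu_2)^p]^{1/p}$.

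The more delicate step is controlling the entropic term. Here I would observe that $\Phi((x_1,x_2),d(\tilde x_1,\tilde x_2)):=K_1(x_1,d\tilde x_1)\otimes K_2(x_2,d\tilde x_2)$ is a Markov kernel on $\R^d\times \R^d$ that pushes $\mu_1\otimes\mu_2$ to $\tilde\mu_1\otimes\tilde\mu_2$ and $\pi^\ast$ to $\tilde\pi$. The data-processing inequality for relative entropy (equivalently, joint convexity of $H$) then yields
\[
H(\tilde\pi|\tilde\mu_1\otimes\tilde\mu_2)\le H(\pi^\ast|\mu_1\otimes\mu_2).
\]
Using $\tilde\pi$ as a feasible competitor in $\mathcal{C}_1(\tilde\mu_1,\tilde\mu_2)$ and combining with the cost bound yields
\[
\mathcal{C}_1(\tilde\mu_1,\tilde\mu_2)\le \int c\,d\tilde\pi+H(\tilde\pi|\tilde\mu_1\otimes\tilde\mu_2)\le \mathcal{C}_1(\mu_1,\mu_2)+L\,W_p(\pi^\ast,\tilde\pi).
\]
Reversing the roles of $(\mu_1,\mu_2)$ and $(\tilde\mu_1,\tilde\mu_2)$ produces the matching bound in the other direction and finishes the proof.

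I expect the main obstacle to be the entropy-monotonicity step: one must verify that $\Phi$ is genuinely a Markov kernel (measurability of the disintegrations $K_i$ is needed, which holds by the disintegration theorem on Polish spaces) and that data processing applies at this level of generality. If one prefers to avoid invoking data processing by name, the same conclusion can be derived directly by writing out the densities against the identity $\tilde\mu_1\otimes\tilde\mu_2=\int K_1(x_1,\cdot)\otimes K_2(x_2,\cdot)\,\mu_1(dx_1)\mu_2(dx_2)$ and applying joint convexity of $H$. A minor secondary issue is the existence of an exact optimizer $\pi^\ast$; should it not be available under the hypotheses at hand, a routine $\delta$-approximation argument leaves the remainder of the argument unchanged.
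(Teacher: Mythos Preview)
The paper does not supply its own proof of this theorem; it is quoted verbatim from \cite[Theorem~3.7]{eckstein2022quantitativestabilityregularizedoptimal} and used as a black box. Hence there is nothing in the present paper to compare your argument against.

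That said, your argument is correct and is essentially the one given in the cited reference. The gluing $\gamma=\pi^\ast(dx_1,dx_2)\,K_1(x_1,d\tilde x_1)\,K_2(x_2,d\tilde x_2)$ produces a coupling of $\pi^\ast$ and $\tilde\pi$ whose $(x_i,\tilde x_i)$-marginals coincide with the optimal $W_p$-couplings $\kappa_i$, which yields the transport-cost bound via \eqref{eq: AL}; and the same kernel, read as a Markov kernel $\Phi$ on $\R^d\times\R^d$, pushes $\mu_1\otimes\mu_2$ to $\tilde\mu_1\otimes\tilde\mu_2$ and $\pi^\ast$ to $\tilde\pi$, so the data-processing inequality gives the entropy monotonicity you need. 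Your caveats are also accurate: measurability of the disintegrations $K_i$ is guaranteed on Polish spaces, and the existence of an exact minimizer $\pi^\ast$ for $\mathcal{C}_1$ holds whenever $c\in L^1(\mu_1\otimes\mu_2)$ (which is implicit in the paper's standing framework), so the $\delta$-approximation is not actually needed here but does no harm.
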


The following lemma is a variation of \cite[Proof of Example 3.6]{eckstein2022quantitativestabilityregularizedoptimal}. 

\begin{lemma}\label{lem: AL}
For a cost function~$c$ satisfying Assumption~\ref{assumption: cost function},~\eqref{eq: AL} holds with
\begin{equation}\label{eq:defL}
L=C\Big [M_p(\mu_1)+M_p(\mu_2)+M_p(\tilde{\mu}_1)+M_p(\tilde{\mu}_2)\Big]^{p-1},
\end{equation}
where we recall~$M_p(\nu) =(\int\|x\|^p\, \nu(dx))^{1/p}$ for~$\nu\in\mathcal{P}(\mathbb{R}^d)$, and~$C$ is a constant only depending  on~$p$ and~$C_p$.
\end{lemma}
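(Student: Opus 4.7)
The plan is to dualize through an arbitrary coupling between $\pi$ and $\tilde\pi$, reduce everything to a pointwise estimate via Assumption \ref{assumption: cost function}, and then split by H\"older's inequality to expose one factor of $W_p$ and one factor controlling the $p$-th moments.

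More concretely, fix $\pi\in\Pi(\mu_1,\mu_2)$ and $\tilde\pi\in\Pi(\tilde\mu_1,\tilde\mu_2)$ and let $\gamma$ be any coupling of $\pi$ and $\tilde\pi$ on $\R^{2d}\times \R^{2d}$. Writing the integrand as a difference, I would use $c(x_1,x_2)-c(y_1,y_2)=h(|x_1-x_2|)-h(|y_1-y_2|)$ and apply \eqref{eq:devh} together with the reverse triangle inequality to obtain the pointwise bound
\begin{equation*}
|c(x_1,x_2)-c(y_1,y_2)|\leq C_p\bigl(|x_1|+|x_2|+|y_1|+|y_2|\bigr)^{p-1}\bigl(|x_1-y_1|+|x_2-y_2|\bigr),
\end{equation*}
where I bound $|x_1-x_2|\vee |y_1-y_2|$ crudely by the sum of all four norms.

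I then integrate against $\gamma$ and apply H\"older's inequality with conjugate exponents $p/(p-1)$ and $p$. The first factor becomes $\bigl(\int(|x_1|+|x_2|+|y_1|+|y_2|)^p\,d\gamma\bigr)^{(p-1)/p}$, which, after using convexity ($(a+b+c+d)^p\le 4^{p-1}(a^p+b^p+c^p+d^p)$) and the fact that the marginals of $\gamma$ are $\pi$ and $\tilde\pi$ with the correct prescribed marginals, is controlled by $C\bigl[M_p(\mu_1)+M_p(\mu_2)+M_p(\tilde\mu_1)+M_p(\tilde\mu_2)\bigr]^{p-1}$. The second factor $\bigl(\int(|x_1-y_1|+|x_2-y_2|)^p\,d\gamma\bigr)^{1/p}$ is, up to a constant depending on $p$, bounded by $\bigl(\int(|x_1-y_1|^p+|x_2-y_2|^p)\,d\gamma\bigr)^{1/p}$, and taking the infimum over $\gamma\in\Pi(\pi,\tilde\pi)$ yields $W_p(\pi,\tilde\pi)$ in the sense of \eqref{eq:defwp}.

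The step that requires some care (rather than being a true obstacle) is making sure the exponent $p-1$ ends up on the right side of the inequality in \eqref{eq:defL} and that the implicit constant depends only on $p$ and $C_p$: this is precisely what the H\"older split with exponents $p/(p-1)$ and $p$ achieves, since it places the moment factor at power $p-1$ while leaving the transport displacement at power $1$, matching the local Lipschitz exponent in Assumption \ref{assumption: cost function}. No other ingredient is needed beyond H\"older, convexity of $t\mapsto t^p$, and the marginal constraints of $\gamma$.
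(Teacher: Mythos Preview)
Your proposal is correct and follows essentially the same approach as the paper: a coupling between $\pi$ and $\tilde\pi$, the pointwise estimate from \eqref{eq:devh} combined with the reverse triangle inequality, and then a H\"older split with exponents $p/(p-1)$ and $p$. The only cosmetic differences are that the paper fixes a $W_p$-optimal coupling from the start (rather than taking the infimum at the end) and keeps the factor $(|x_1-x_2|\vee|y_1-y_2|)^{p-1}$ intact, bounding its $L^p$-norm via Minkowski's inequality rather than your cruder bound by $|x_1|+|x_2|+|y_1|+|y_2|$; both routes produce the same $L$ up to constants depending only on $p$ and $C_p$.
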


\subsection{Bounding ~$|\mathcal{C}_\epsilon(\mu,\nu)-\mathcal{C}_\epsilon(\mu^{B_r},\nu^{B_s})|$}

For the remainder of this section we assume that Assumptions~\ref{assumption:Psialpha} and~\ref{assumption: cost function} are in force. We also fix~$r,s>0$ and recall~$\mu^{B_r},\nu^{B_s}$ from Definition~\ref{def:restricted}.

\begin{lemma}[Scaled cost]\label{lem: third term L}
We have
\begin{equation}
\Big|\int \frac{c}{\veps}\, d(\pi-\tilde{\pi})\Big|\leq LW_p(\pi,\tilde{\pi})
\end{equation}
for all~$\pi\in \Pi(\mu,\nu)$ and~$\tilde{\pi}\in\Pi(\mu^{B_r},\nu^{B_s})$, where
\begin{equation}\label{eq:LmuBnuB}
L=\frac{C}{\veps}\paren[\big]{ M_p(\mu)+M_p(\nu)}^{p-1}.%
\end{equation}
Here the constant~$C$ only depends on~$p$ and~$C_p$.
\end{lemma}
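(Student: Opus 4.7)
The plan is to deduce this from Lemma \ref{lem: AL} applied to the rescaled cost $c/\veps$, and then to replace the moments of the restricted measures by the moments of $\mu,\nu$ using a short dichotomy.

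First I would observe that if $c$ satisfies Assumption \ref{assumption: cost function} with constant $C_p$, then $c/\veps$ satisfies the same assumption with constant $C_p/\veps$, since the inequality in \eqref{eq:devh} is linear in $h$. Invoking Lemma \ref{lem: AL} with $\mu_1=\mu$, $\mu_2=\nu$, $\tilde\mu_1=\mu^{B_r}$, $\tilde\mu_2=\nu^{B_s}$ for the cost $c/\veps$ therefore yields
\begin{equation*}
\Bigl|\int \tfrac{c}{\veps}\, d(\pi-\tilde\pi)\Bigr|
\le \frac{C}{\veps}\bigl[M_p(\mu)+M_p(\nu)+M_p(\mu^{B_r})+M_p(\nu^{B_s})\bigr]^{p-1} W_p(\pi,\tilde\pi),
\end{equation*}
for a constant $C$ depending only on $p,C_p$.

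The remaining (and only substantive) step is to bound $M_p(\mu^{B_r})$ by a constant multiple of $M_p(\mu)$, and similarly for $\nu^{B_s}$. I would do this by splitting on the size of $\mu(B_r)$. If $\mu(B_r)\ge 1/2$, then directly
\begin{equation*}
M_p(\mu^{B_r})^p=\frac{1}{\mu(B_r)}\int_{B_r}|x|^p\,\mu(dx)\le 2\, M_p(\mu)^p.
\end{equation*}
If $\mu(B_r)<1/2$, then $\mu(B_r^c)>1/2$ and
\begin{equation*}
M_p(\mu)^p\ge \int_{B_r^c}|x|^p\,\mu(dx)\ge r^p\mu(B_r^c)>r^p/2,
\end{equation*}
so $r<2^{1/p}M_p(\mu)$; since $\mu^{B_r}$ is supported in $B_r$, this forces $M_p(\mu^{B_r})\le r\le 2^{1/p}M_p(\mu)$. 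Either way $M_p(\mu^{B_r})\le 2^{1/p}M_p(\mu)$, and the same reasoning gives $M_p(\nu^{B_s})\le 2^{1/p}M_p(\nu)$. Absorbing the factor $2^{(p-1)/p}$ into $C$ produces the stated form of $L$.

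The only potential obstacle is precisely this dichotomy: the naive estimate $M_p(\mu^{B_r})^p\le \mu(B_r)^{-1}M_p(\mu)^p$ blows up as $\mu(B_r)\downarrow 0$, so one cannot get a uniform bound from Lemma \ref{lem: AL} alone. The trade-off between the small-mass regime (where the support itself is small) and the large-mass regime (where the normalization constant is harmless) is what makes the constant $C$ in \eqref{eq:LmuBnuB} independent of $r$ and $s$.
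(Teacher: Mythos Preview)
Your proof is correct and follows the same overall scheme as the paper: apply Lemma \ref{lem: AL} to $c/\veps$ and then eliminate $M_p(\mu^{B_r}),M_p(\nu^{B_s})$ by bounding them in terms of $M_p(\mu),M_p(\nu)$. The only difference is in that last step. You use a dichotomy on $\mu(B_r)$ to obtain $M_p(\mu^{B_r})\le 2^{1/p}M_p(\mu)$, while the paper proves the sharp inequality $M_p(\mu^{B_r})\le M_p(\mu)$ directly via the algebraic rearrangement
\[
M_p(\mu)^p=M_p(\mu^{B_r})^p+\int_{B_r^c}|x|^p\,d\mu-\frac{\mu(B_r^c)}{\mu(B_r)}\int_{B_r}|x|^p\,d\mu\ge M_p(\mu^{B_r})^p,
\]
using $|x|\ge r$ on $B_r^c$ and $|x|\le r$ on $B_r$ so that the last two terms cancel after replacing $|x|^p$ by $r^p$. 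So your remark that ``one cannot get a uniform bound \ldots\ alone'' and that the dichotomy is the key idea is slightly off: the uniform bound holds with constant~$1$ without any case split. Either argument suffices for the lemma, but the paper's is cleaner and yields a better constant; it is also reused later (see \eqref{eq: restrict second moment less than original}).
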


\begin{lemma}\label{lem: w2 of true approx}
We have
\begin{align}
W_p(\mu,\nu; \mu^{B_r}, \nu^{B_s})^p
&\le 2^{p-1}\Big[\mu(B_r^c)\paren[\big]{M_p(\mu^{B_r})^p+M_p(\mu^{B_r^c})^p}\\
&\qquad\quad + \nu(B_s^c)\paren[\big]{M_p(\nu^{B_s})^p+M_p(\nu^{B_s^c})^p}\Big]. \label{eq:WpApprox}
\end{align} 
\end{lemma}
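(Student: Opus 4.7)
The plan is to treat the two coordinates independently, since by the definition of $W_p$ on product spaces in \eqref{eq:defwp} we have
\[
W_p(\mu,\nu; \mu^{B_r}, \nu^{B_s})^p \;=\; W_p(\mu, \mu^{B_r})^p + W_p(\nu, \nu^{B_s})^p,
\]
so it suffices to bound $W_p(\mu,\mu^{B_r})^p$ by $2^{p-1}\mu(B_r^c)\bigl(M_p(\mu^{B_r})^p+M_p(\mu^{B_r^c})^p\bigr)$ and then apply the same estimate to $\nu$.

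To bound $W_p(\mu,\mu^{B_r})^p$, I will exhibit an explicit (suboptimal) coupling. Decompose $\mu = \mu(B_r)\,\mu^{B_r} + \mu(B_r^c)\,\mu^{B_r^c}$. I will couple the $\mu^{B_r}$-part with itself on the diagonal, and couple the $\mu^{B_r^c}$-part with $\mu^{B_r}$ using the independent (product) coupling $\mu^{B_r^c}\otimes \mu^{B_r}$. Formally, if we write
\[
\pi := \mu(B_r)\,(\mathrm{id},\mathrm{id})_{\#}\mu^{B_r} + \mu(B_r^c)\,\mu^{B_r^c}\otimes\mu^{B_r},
\]
then $\pi \in \Pi(\mu,\mu^{B_r})$, since its first marginal recovers $\mu$ and its second marginal equals $\mu(B_r)\mu^{B_r}+\mu(B_r^c)\mu^{B_r}=\mu^{B_r}$.

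Next I would estimate $\int |x-y|^p\,\pi(dx,dy)$. The diagonal part contributes zero, while the off-diagonal part gives
\[
\mu(B_r^c)\int\!\!\int |x-y|^p\,\mu^{B_r^c}(dx)\,\mu^{B_r}(dy)\;\le\;2^{p-1}\mu(B_r^c)\bigl(M_p(\mu^{B_r^c})^p + M_p(\mu^{B_r})^p\bigr),
\]
where I use the elementary inequality $|x-y|^p \le 2^{p-1}(|x|^p+|y|^p)$ following from convexity of $t\mapsto t^p$ for $p\ge 1$. Repeating the argument for $\nu$ and $\mu^{B_s}$ and summing yields \eqref{eq:WpApprox}.

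There is no real obstacle here; the only point requiring a moment of care is verifying that the chosen $\pi$ has the correct marginals, which follows from the identity $\mu = \mu(B_r)\mu^{B_r} + \mu(B_r^c)\mu^{B_r^c}$ and the fact that mass $\mu(B_r^c)$ has been redistributed from $\mu^{B_r^c}$ to $\mu^{B_r}$ under the product coupling. The bound is manifestly suboptimal (one could use a monotone or other transport map on $B_r^c$), but this crude coupling already yields the desired inequality with the correct dependence on $\mu(B_r^c)$ and $M_p(\mu^{B_r^c})$, which is all that is needed for Theorem~\ref{thm: main}.
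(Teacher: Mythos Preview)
Your proof is correct and essentially identical to the paper's: the paper constructs the same coupling (with coordinates swapped, $\hat\pi\in\Pi(\mu^{B_r},\mu)$) and applies the same convexity bound $|x-y|^p\le 2^{p-1}(|x|^p+|y|^p)$. One cosmetic point: the identity $W_p(\mu,\nu;\mu^{B_r},\nu^{B_s})^p=W_p(\mu,\mu^{B_r})^p+W_p(\nu,\nu^{B_s})^p$ is the \emph{definition} of the semicolon notation introduced in \eqref{eq:EOTstablility}, not a consequence of \eqref{eq:defwp}.
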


Combining Lemma~\ref{lem: third term L} and Lemma~\ref{lem: w2 of true approx} with Theorem~\ref{thm: continue value of EOT} immediately gives the following lemma.
\begin{lemma}\label{lem: third error in epsilon case}
We have 
    \begin{align}
    \abs[\big]{\mathcal{C}_{\veps}(\mu,\nu)-\mathcal{C}_{\veps}(\mu^{B_r},\nu^{B_s})}&\leq C\paren[\big]{ M_p(\mu)+M_p(\nu)}^{p-1}\Big[\mu(B_r^c)\paren[\big]{M_p(\mu^{B_r})^p+M_p(\mu^{B_r^c})^p}\\
&\qquad + \nu(B_s^c)\paren[\big]{M_p(\nu^{B_s})^p+M_p(\nu^{B_s^c})^p}\Big] ^{\frac1p}, \label{eq: estimate third term}
\end{align}
where the constant~$C$ only depends  on~$p$ and~$C_p$.
\end{lemma}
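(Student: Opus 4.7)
The plan is to combine the three preceding results in a short chain, with the only subtlety being a proper bookkeeping of the $\veps$-scaling.

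First, I would invoke the scaling identity \eqref{eq: scaling with epsilon} to rewrite the left-hand side as
\begin{equation*}
\mathcal{C}_{\veps}(\mu,\nu) - \mathcal{C}_{\veps}(\mu^{B_r},\nu^{B_s}) = \veps\Big[\mathcal{C}_1(\mu,\nu, c/\veps) - \mathcal{C}_1(\mu^{B_r},\nu^{B_s}, c/\veps)\Big].
\end{equation*}
This reduces the problem to comparing two entropic transport values with the same regularization parameter $1$ but with the scaled cost $c/\veps$.

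Next, I would apply Lemma \ref{lem: third term L}, which says precisely that $c/\veps$ satisfies condition \eqref{eq: AL} for all couplings $\pi \in \Pi(\mu,\nu)$ and $\tilde{\pi}\in \Pi(\mu^{B_r},\nu^{B_s})$ with the constant $L$ given by \eqref{eq:LmuBnuB}. With this condition verified, Theorem \ref{thm: continue value of EOT} applied to $(\mu_1,\mu_2) = (\mu,\nu)$ and $(\tilde\mu_1,\tilde\mu_2) = (\mu^{B_r},\nu^{B_s})$ yields
\begin{equation*}
\abs[\big]{\mathcal{C}_1(\mu,\nu, c/\veps) - \mathcal{C}_1(\mu^{B_r},\nu^{B_s}, c/\veps)} \leq L\, W_p(\mu,\nu;\mu^{B_r},\nu^{B_s}).
\end{equation*}
Multiplying through by $\veps$ cancels the $1/\veps$ in $L$ (cf.~\eqref{eq:LmuBnuB}), leaving the prefactor $C(M_p(\mu)+M_p(\nu))^{p-1}$ with $C$ depending only on $p$ and $C_p$.

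Finally, I would plug in the bound on the Wasserstein distance $W_p(\mu,\nu;\mu^{B_r},\nu^{B_s})$ from Lemma \ref{lem: w2 of true approx} (which is already expressed as the $p$-th root of the bracketed quantity in \eqref{eq:WpApprox}) to obtain \eqref{eq: estimate third term}. There is no real obstacle here; the only thing that requires a moment's care is ensuring that the $\veps$ in the scaling identity exactly absorbs the $1/\veps$ appearing in $L$ from Lemma \ref{lem: third term L}, so that the final bound is independent of $\veps$—which is indeed the point of combining \eqref{eq: scaling with epsilon} with the stability result for unregularized OT-type costs.
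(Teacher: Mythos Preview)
Your proposal is correct and follows precisely the approach indicated in the paper: the paper simply states that the lemma follows by combining Lemma~\ref{lem: third term L} and Lemma~\ref{lem: w2 of true approx} with Theorem~\ref{thm: continue value of EOT}, and your write-up spells out exactly this chain (including the $\veps$-scaling via \eqref{eq: scaling with epsilon}, which is implicit in the paper's one-line justification). The factor $2^{p-1}$ from \eqref{eq:WpApprox} is absorbed into the constant $C$, as you implicitly assume.
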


\subsection{Bounding  ~$\E[|\mathcal{C}_\epsilon(\mu_n,\nu_n)-\mathcal{C}_\epsilon(\mu^{B_r}_n,\nu^{B_s}_n)|]$}
We now carry out a similar analysis for~$\mu_n, \nu_n.$ For notational simplicity we set~$$n_r:=|\{i\in\{1, \dots, n\}:  X_i\in B_r\}|, \quad n_s :=|\{i\in\{1, \dots, n\}:  Y_i\in B_s\}|.$$

\begin{lemma}\label{lem: estimate second term-before take the expetation}
If~$n_r, n_s>0$, then we have
\begin{align*}
\MoveEqLeft\abs{\mathcal{C}_{\veps}(\mu_n,\nu_n)-\mathcal{C}_{\veps}(\mu_n^{B_r},\nu_n^{B_s})}\leq  C\big[M_p(\mu_n)+M_p(\nu_n)+M_p(\mu_n^{B_r})+M_p(\nu_n^{B_s})\big]^{p-1}\\
&\cdot \Big[ \Big(\frac{1}{n_r}-\frac{1}{n}\Big) \sum_{X_i\in B_r}|X_i|^p+\frac{1}{n}\sum_{X_i\notin B_r}|X_i|^p+\Big(\frac{1}{n_s}-\frac{1}{n}\Big) \sum_{Y_i\in B_s}|Y_i|^p+\frac{1}{n}\sum_{Y_i\notin B_s}|Y_i|^p\Big]^{\frac1p},
\end{align*}
where the constant~$C$ only depends on~$p$ and~$C_p$.
\end{lemma}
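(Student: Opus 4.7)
The plan is to mirror the strategy used in the proof of Lemma \ref{lem: third error in epsilon case}, replacing the population measures $\mu,\nu,\mu^{B_r},\nu^{B_s}$ with their empirical analogues throughout. First I would use the scaling identity \eqref{eq: scaling with epsilon} to write
\[
\mathcal{C}_{\veps}(\mu_n,\nu_n)-\mathcal{C}_{\veps}(\mu_n^{B_r},\nu_n^{B_s}) = \veps\big[\mathcal{C}_1(\mu_n,\nu_n,c/\veps)-\mathcal{C}_1(\mu_n^{B_r},\nu_n^{B_s},c/\veps)\big],
\]
and then apply Theorem \ref{thm: continue value of EOT}. The $(A_L)$-constant for the cost $c/\veps$ relative to the four marginals $\mu_n,\nu_n,\mu_n^{B_r},\nu_n^{B_s}$ is, by Lemma \ref{lem: AL} (applied with an extra factor $1/\veps$ as in Lemma \ref{lem: third term L}),
\[
L = \tfrac{C}{\veps}\big[M_p(\mu_n)+M_p(\nu_n)+M_p(\mu_n^{B_r})+M_p(\nu_n^{B_s})\big]^{p-1}.
\]
The $1/\veps$ cancels against the $\veps$ from the scaling identity, which is why no $\veps$ appears in the final bound.

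It remains to control $W_p(\mu_n,\nu_n;\mu_n^{B_r},\nu_n^{B_s})^p$ via an explicit coupling, exactly paralleling Lemma \ref{lem: w2 of true approx}. For the $\mu$-side I would construct the following coupling between $\mu_n$ and $\mu_n^{B_r}$: for each $X_i\in B_r$, send the mass $1/n$ located at $X_i$ in $\mu_n$ to $X_i$ in $\mu_n^{B_r}$ at zero cost (this accounts for total mass $n_r/n$ on each side); the remaining first-marginal mass $\mu_n|_{B_r^c}$ is then coupled to the remaining second-marginal mass $((n-n_r)/n)\mu_n^{B_r}$ via the independent product coupling. Using $|x-y|^p \le 2^{p-1}(|x|^p+|y|^p)$, the cost of this coupling is at most
\[
2^{p-1}\Big[\tfrac{n-n_r}{n}M_p(\mu_n^{B_r^c})^p+\tfrac{n-n_r}{n}M_p(\mu_n^{B_r})^p\Big]
= 2^{p-1}\Big[\tfrac{1}{n}\!\!\sum_{X_i\notin B_r}\!\!|X_i|^p + \Big(\tfrac{1}{n_r}-\tfrac{1}{n}\Big)\!\sum_{X_i\in B_r}\!|X_i|^p\Big],
\]
since $\frac{n-n_r}{n}M_p(\mu_n^{B_r^c})^p = \frac{1}{n}\sum_{X_i\notin B_r}|X_i|^p$ and $\frac{n-n_r}{n}M_p(\mu_n^{B_r})^p = (\frac{1}{n_r}-\frac{1}{n})\sum_{X_i\in B_r}|X_i|^p$. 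The same construction on the $\nu$-side yields the analogous $Y_i$ terms, and summing (as prescribed by Definition \ref{def:Al} and \eqref{eq:defwp}) gives precisely the bracket in the statement.

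No single step is a real obstacle — the entire argument is a direct transcription of the population proof. The only bookkeeping I need to be careful about is the hypothesis $n_r,n_s>0$ (otherwise $\mu_n^{B_r}$, $\nu_n^{B_s}$ are undefined), and matching the total masses in the two pieces of the coupling so that the independent coupling has the correct normalization $\mu_n|_{B_r^c}\otimes\mu_n^{B_r}/\mu_n(B_r^c)$. Once the coupling is written down explicitly, combining it with Theorem \ref{thm: continue value of EOT} and Lemma \ref{lem: AL} delivers the claimed inequality with a constant depending only on $p$ and $C_p$.
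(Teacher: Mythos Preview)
Your proposal is correct and follows essentially the same approach as the paper: apply the scaling identity \eqref{eq: scaling with epsilon} together with Theorem \ref{thm: continue value of EOT} and Lemma \ref{lem: AL}, and bound $W_p(\mu_n,\mu_n^{B_r})$ (resp.\ $W_p(\nu_n,\nu_n^{B_s})$) via the explicit coupling that keeps each $X_i\in B_r$ fixed and couples the remaining mass independently. The paper writes this coupling as $\hat\pi=\frac{n_r}{n}(x,x)_\#\mu_n^{B_r}+(1-\frac{n_r}{n})\big(\mu_n^{B_r}\otimes\frac{\mu_n|_{B_r^c}}{1-n_r/n}\big)$, which is exactly the construction you describe.
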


Taking the conditional expectation on both sides of Lemma~\ref{lem: estimate second term-before take the expetation}, we have the following result.

\begin{lemma}\label{lem: estimate of second term-after expectation} If~$n_r, n_s>0$, then we have
\begin{align}
\MoveEqLeft\E\big[|\mathcal{C}_{\veps}(\mu_n,\nu_n)-\mathcal{C}_{\veps}(\mu_n^{B_r},\nu_n^{B_s})|\,\big| n_r,n_s\big]
\leq C\Big(M_p(\mu)^p+M_p(\nu)^p\Big)^{\frac{p-1}{p}}\\
& \cdot\bigg[\paren[\Big]{M_p(\mu)^p+M_p(\mu^{B_r^c})^p}\cdot  \Big(1-\frac{n_r}{n}\Big) 
+\paren[\Big]{M_p(\nu)^p+M_p(\nu^{B_s^c})^p}\cdot\Big(1-\frac{n_s}{n}\Big)\bigg]^{\frac1p} \label{eq:ApproxOfEmpCpt}
\end{align}
where the constant~$C$ depends on~$p$ and~$C_p$.
\end{lemma}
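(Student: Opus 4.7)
The plan is to take conditional expectation given $n_r, n_s$ of the pointwise bound in Lemma \ref{lem: estimate second term-before take the expetation}, then split the product via H\"older's inequality and evaluate the two resulting conditional expectations using the structure of the restricted empirical samples. Set $A := M_p(\mu_n)+M_p(\nu_n)+M_p(\mu_n^{B_r})+M_p(\nu_n^{B_s})$ and let $D$ denote the bracketed sum on the right-hand side of that lemma, so the pointwise bound reads $|\mathcal{C}_{\veps}(\mu_n,\nu_n)-\mathcal{C}_{\veps}(\mu_n^{B_r},\nu_n^{B_s})|\le C A^{p-1}D^{1/p}$. First, I would apply H\"older conditionally on $n_r, n_s$ with conjugate exponents $p/(p-1)$ and $p$ to obtain
\begin{equation*}
\E\big[A^{p-1}D^{1/p}\,\big|\, n_r, n_s\big] \le \E\big[A^p\,\big|\, n_r, n_s\big]^{(p-1)/p}\, \E\big[D\,\big|\, n_r, n_s\big]^{1/p}.
\end{equation*}

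For $\E[D\mid n_r, n_s]$, I would evaluate term by term using Remark \ref{rem:binomial}: conditionally on $n_r$, the $n_r$ samples in $B_r$ are i.i.d.~from $\mu^{B_r}$ and the remaining $n-n_r$ samples are i.i.d.~from $\mu^{B_r^c}$, and similarly for the $Y_i$'s. A direct computation yields
\begin{equation*}
\E\big[D\,\big|\, n_r, n_s\big] = \Big(1-\tfrac{n_r}{n}\Big)\big(M_p(\mu^{B_r})^p + M_p(\mu^{B_r^c})^p\big) + \Big(1-\tfrac{n_s}{n}\Big)\big(M_p(\nu^{B_s})^p + M_p(\nu^{B_s^c})^p\big).
\end{equation*}
The decomposition $M_p(\mu)^p = \mu(B_r)M_p(\mu^{B_r})^p + \mu(B_r^c)M_p(\mu^{B_r^c})^p$ then gives $M_p(\mu^{B_r})^p \le M_p(\mu)^p/\mu(B_r)$, and similarly for $\nu$, which lets one replace $M_p(\mu^{B_r})^p$ by a constant multiple of $M_p(\mu)^p$ in the bracket to match the statement.

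For $\E[A^p\mid n_r, n_s]$, I would expand $A^p \le 4^{p-1}(M_p(\mu_n)^p + M_p(\nu_n)^p + M_p(\mu_n^{B_r})^p + M_p(\nu_n^{B_s})^p)$ and compute each conditional expectation with the same conditional structure: each is a convex combination of $M_p(\mu^{B_r})^p, M_p(\mu^{B_r^c})^p$ (or the $\nu$-analogues), each dominated by $M_p(\mu)^p+M_p(\nu)^p$ up to a constant via the same identity. This yields $\E[A^p\mid n_r, n_s]^{(p-1)/p} \le C(M_p(\mu)^p + M_p(\nu)^p)^{(p-1)/p}$, and combining with the bound on $\E[D\mid n_r, n_s]^{1/p}$ gives the claim.

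The main obstacle is the careful bookkeeping needed to pass from conditional $M_p$-moments on $B_r$ to $M_p(\mu)^p$: the ratio $M_p(\mu^{B_r})^p/M_p(\mu)^p$ can be as large as $1/\mu(B_r)$, so some implicit control on $\mu(B_r), \nu(B_s)$ is required. In the regime in which this lemma is ultimately applied (with $r=r_n^\mu, s=r_n^\nu$ from \eqref{eq:chooseRxRy} and Assumption \ref{assumption:Psialpha} in force), these probabilities are bounded below by a universal constant, so the extra factors get absorbed into a constant depending only on $p$ and $C_p$.
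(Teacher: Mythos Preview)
Your strategy—take conditional expectation of the pointwise bound from Lemma~\ref{lem: estimate second term-before take the expetation}, split via H\"older with exponents $p/(p-1)$ and $p$, and evaluate each factor using the conditional i.i.d.\ structure of Remark~\ref{rem:binomial}—is exactly the paper's approach, and your computation of $\E[D\mid n_r,n_s]$ matches the paper's line for line.

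The one genuine gap is your ``main obstacle'' paragraph. You bound $M_p(\mu^{B_r})^p \le M_p(\mu)^p/\mu(B_r)$ and then appeal to the specific regime $r=r_n^\mu$ to control $\mu(B_r)$ from below. This does not prove the lemma as stated, since the constant there depends only on $p$ and $C_p$, not on $r$ or $\mu$. The point you are missing is that the sharper inequality
\[
M_p(\mu^{B_r})^p \le M_p(\mu)^p
\]
holds outright, with no factor of $1/\mu(B_r)$, because $B_r$ is a ball centred at the origin: writing
\[
M_p(\mu)^p = M_p(\mu^{B_r})^p + \int_{B_r^c}|x|^p\,d\mu - \frac{\mu(B_r^c)}{\mu(B_r)}\int_{B_r}|x|^p\,d\mu
\]
and using $|x|\ge r$ on $B_r^c$ and $|x|\le r$ on $B_r$, the last two terms together are nonnegative. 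This is \eqref{eq: restrict second moment less than original}, established in the proof of Lemma~\ref{lem: third term L}, and it is what the paper invokes to bound both $\E[D\mid n_r,n_s]$ and $\E[A^p\mid n_r,n_s]$ with a constant depending only on $p$ and $C_p$.
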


\section{Bounding~$\E[|\mathcal{C}_{\veps}(\mu^{B_r}_n,\nu^{B_s}_n)-\mathcal{C}_{\veps}(\mu^{B_r},\nu^{B_s})|]$} \label{sec:stromme}

We now bound the middle term in~\eqref{eq:main_idea_proof}. For this we use the following result, which is a generalization of the result in \cite{stromme2023minimum} to divergence regularized optimal transport.
\begin{lemma} \label{lem: estimate first term-before taking expectation}
Define the
population density
\begin{equation}\label{eq:pb}
 p_{\veps}(x,y)=\psi'\Big(\frac{f_{\veps}(x)+g_{\veps}(y)-c(x,y)}{\veps}\Big).
\end{equation}
If~$n_r,n_s >0$, then we have
\begin{align}
&\E \big[|\mathcal{C}_{\veps}(\mu_n^{B_r},\nu_n^{B_s})-\mathcal{C}_{\veps}(\mu^{B_r},\nu^{B_s})|\,\big| n_r, n_s\big]\\
& \hspace{2cm} \leq \sqrt{\frac{\var_{\mu^{B_r}}(f^{r,s})}{n_r}}+\sqrt{\frac{\var_{\nu^{B_s}}(g^{r,s})}{n_s}}+\frac{\veps}{\sqrt{n_rn_s}}\norm[\big]{p^{r,s}}_{L^2(\mu^{B_r}\otimes\nu^{B_s})}\\
&\hspace{4cm} + \frac{\sqrt{2}\|p^{r,s}\|_{L^2(\mu^{B_r}\otimes\nu^{B_s})}}{(n_rn_s)^{\frac14}}\E\Big[\|(f^{r,s}_n-f^{r,s},g_n^{r,s}-g^{r,s})\|^2_{L^2(\mu_n^{B_r})\times L^2(\nu_n^{B_s})}\big| n_r, n_s\Big]^{\frac12}.
\label{eq:ROTCpt}
\end{align}
\end{lemma}

\subsection{Norm of entropic densities~$p^{r,s}$}
It remains to bound the density~$p^{r,s}$ in the space~$L^2(\mu^{B_r}\otimes \nu^{B_s})$. For this we define~$$B^\mu_r:= B_r\cap \mathrm{spt}(\mu), \quad \quad B^\nu_s:= B_s\cap \mathrm{spt}(\nu),$$ and use the following result.

\begin{lemma}[Estimation of density via covering numbers]\label{lem:L2normP} We have
\begin{align}
\norm{p^{r,s}(x,y)}_{L^2(\mu^{B_r}\otimes \nu^{B_s})}\leq 1+e^{\frac{C_{\psi}\tilde{\delta}}{2(t_0/2)^{\gamma}}}\sqrt{\mathcal{N} \Big(B^\mu_r, \frac{\tilde{\delta}\veps}{2C_p(r+s)^{p-1}}\Big)\wedge \mathcal{N}\Big(B^\nu_s, \frac{\tilde{\delta}\veps}{2C_p(r+s)^{p-1}}\Big)},
\end{align}
where we recall~$\tilde{\delta}=\min\{\delta, t_0/2\}$ from~\eqref{eq:delta}.
\end{lemma}

Applying Lemma~\ref{lem:LinftyBoundSchrondinger}, Lemma~\ref{lem:varfg} and Lemma~\ref{lem:L2normP} to Lemma~\ref{lem: estimate first term-before taking expectation} yields following corollary.

\begin{corollary}\label{cor: first term-before taking expecation} If Assumption~\ref{assumption: cost function} holds and~$n_r, n_s>0$, then we have
\begin{align}
\begin{split}
\E \big[|\mathcal{C}_{\veps}(\mu_n^{B_r},\nu_n^{B_s})&-\mathcal{C}_{\veps}(\mu^{B_r},\nu^{B_s})|\,\big| n_r, n_s\big]\\
&\leq \frac{C_p(r+s)^p}{\sqrt{n_r}}+\frac{C_p(r+s)^p}{\sqrt{n_s}}
 + \bigg[\frac{ 4(t_0+9C_p(r+s)^p)}{(n_rn_s)^{\frac14}}+\frac{\veps}{\sqrt{n_rn_s}}\bigg]\\
&\quad \cdot\paren[\bigg]{1+e^{\frac{C_{\psi}\tilde{\delta}}{2(t_0/2)^{\gamma}}}\sqrt{\mathcal{N} \Big(B^\mu_r, \frac{\tilde{\delta}\veps}{2C_p(r+s)^{p-1}}\Big)\wedge \mathcal{N}\Big(B^\nu_s, \frac{\tilde{\delta}\veps}{2C_p(r+s)^{p-1}}\Big)}}. \label{eq:EmpApproxCpt}
\end{split}
\end{align}
\end{corollary}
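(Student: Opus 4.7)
The plan is a direct substitution of the ingredients from Lemma \ref{lem: Lip potential}, Lemma \ref{lem: estimate of 2 norm population density}, and Lemma \ref{lem: pointwise bound for dual potentials} into the master inequality \eqref{eq:EOTCpt} provided by Lemma \ref{lem: estimate first term-before taking expectation}. The four summands on the right-hand side of \eqref{eq:EOTCpt} will be bounded one at a time, matching the two groups of terms in the target inequality \eqref{eq:EmpApproxCpt}: the two variance terms will produce the $C_p(r+s)^p/\sqrt{n_r}$ and $C_p(r+s)^p/\sqrt{n_s}$ contributions, while the two summands carrying $\|p^{r,s}\|_{L^2}$ will combine into the bracketed factor multiplying the covering-number square root, once Lemma \ref{lem: estimate of 2 norm population density} is invoked.

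For the variance terms, I would use Lemma \ref{lem: Lip potential} to conclude that $f^{r,s}$ is $C_p(r+s)^{p-1}$-Lipschitz. Since $\mu^{B_r}$ is supported in $B_r$, the oscillation of $f^{r,s}$ over $\supp(\mu^{B_r})$ is at most $C_p(r+s)^{p-1}\cdot 2r \le 2C_p(r+s)^p$, so Popoviciu's inequality gives $\Var_{\mu^{B_r}}(f^{r,s})\le (C_p(r+s)^p)^2$. This yields $\sqrt{\Var_{\mu^{B_r}}(f^{r,s})/n_r}\le C_p(r+s)^p/\sqrt{n_r}$, and an identical argument handles the $g^{r,s}$ term. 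The third summand, $(\veps/\sqrt{n_rn_s})\,\|p^{r,s}\|_{L^2(\mu^{B_r}\otimes\nu^{B_s})}$, is immediately bounded by Lemma \ref{lem: estimate of 2 norm population density}, producing the $\veps/\sqrt{n_rn_s}$ piece of the bracket times the covering number.

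For the fourth summand I would appeal to Lemma \ref{lem: pointwise bound for dual potentials} applied to both the population potentials $(f^{r,s},g^{r,s})$ and the empirical potentials $(f_n^{r,s},g_n^{r,s})$, which can be treated on the same footing since $\mu_n^{B_r}$ and $\nu_n^{B_s}$ are supported in $B_r$ and $B_s$, respectively. Assuming this lemma delivers a uniform bound of order $C_p(r+s)^p$ on all four potentials, the triangle inequality gives $|f_n^{r,s}-f^{r,s}|\le 2C_p(r+s)^p$ and similarly for $g$, hence almost surely
\begin{equation*}
\|(f^{r,s}_n-f^{r,s},g_n^{r,s}-g^{r,s})\|^2_{L^2(\mu_n^{B_r})\times L^2(\nu_n^{B_s})}\le 8\bigl(C_p(r+s)^p\bigr)^2.
\end{equation*}
Taking expectations and a square root, then multiplying by $\sqrt{2}\,\|p^{r,s}\|_{L^2}/(n_rn_s)^{1/4}$ and applying Lemma \ref{lem: estimate of 2 norm population density} once more, produces the $8C_p(r+s)^p/(n_rn_s)^{1/4}$ piece of the bracket in \eqref{eq:EmpApproxCpt}.

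The only nontrivial ingredient is the uniform pointwise bound on the Schr\"odinger potentials encoded in Lemma \ref{lem: pointwise bound for dual potentials}; the remaining steps are routine plug-ins and the use of the Lipschitz control from Lemma \ref{lem: Lip potential}. Once that pointwise bound is in hand, collecting the four estimates above and simplifying the factor $e^{8C_p}$ coming from Lemma \ref{lem: estimate of 2 norm population density} gives \eqref{eq:EmpApproxCpt} directly.
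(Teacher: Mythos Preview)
Your proposal is correct and follows essentially the same route as the paper: both plug the pointwise potential bounds (Lemma~\ref{lem: pointwise bound for dual potentials}) and the density bound (Lemma~\ref{lem: estimate of 2 norm population density}) into the master estimate \eqref{eq:EOTCpt}. The only cosmetic difference is that for the variance terms you invoke Lemma~\ref{lem: Lip potential} plus Popoviciu's inequality, whereas the paper uses the cruder $\Var_{\mu^{B_r}}(f^{r,s})\le \|f^{r,s}\|_{L^\infty}^2$ directly from Lemma~\ref{lem: pointwise bound for dual potentials}; both yield the same bound $(C_p(r+s)^p)^2$.
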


\section{Proof of Theorem~\ref{thm: main}} \label{sec:proofmain}

Throughout this section, we assume that Assumptions~\ref{assumption:Psialpha}-\ref{assum:psi} are in force.  We first state two additional estimates for ease of reference in the proof of Theorem~\ref{thm: main}.

\begin{lemma}\label{lem: nx>0, or ny>0} For~$i,j\geq 0$ and~$r,s>0$ we have
\begin{align}\label{eq:nx>0}
\E \big[|\mathcal{C}_{\veps}(\mu_n,\nu_n)-\mathcal{C}_{\veps}(\mu,\nu)|\,\big|\,n_r=i,n_s=0\big]
\leq   C\bigg[ 1+\Big(1+\frac{i}{n}\Big)M_p(\mu)^p &+ \paren[\Big]{1-\frac{i}{n}}M_p(\mu^{B_r^c})^p\nonumber \\
&+M_p(\nu)^p+M_p(\nu^{B_s^c})^p\bigg],
\end{align}
as well as
\begin{align}\label{eq:ny>0}
\E \big[ |\mathcal{C}_{\veps}(\mu_n,\nu_n)-\mathcal{C}_{\veps}(\mu,\nu)|\,\big|\,n_r=0,n_s=j\big] 
\leq  C\bigg[ 1+\Big( 1+\frac{j}{n}\Big)M_p(\nu)^p &+\Big( 1-\frac{j}{n}\Big)M_p(\nu^{B_s^c})^p\nonumber \\
& +M_p(\mu)^p+M_p(\mu^{B_r^c})^p\bigg].
\end{align}
Here the constant~$C$ only depends on~$p$ and~$C_p$.
\end{lemma}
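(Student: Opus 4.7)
The plan is to exploit the elementary triangle inequality
\begin{equation*}
|\mathcal{C}_{\veps}(\mu_n,\nu_n)-\mathcal{C}_{\veps}(\mu,\nu)|\le \mathcal{C}_{\veps}(\mu_n,\nu_n)+\mathcal{C}_{\veps}(\mu,\nu),
\end{equation*}
which is valid since both terms are non-negative ($c\ge 0$ and $H\ge 0$). Each summand can then be controlled by plugging the product coupling into the infimum defining \eqref{eq: EOT}; since $H(\rho\otimes\sigma\mid\rho\otimes\sigma)=0$, this yields $\mathcal{C}_{\veps}(\rho,\sigma)\le \int c\,d(\rho\otimes \sigma)$ for any $\rho,\sigma$. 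Combining with Lemma \ref{lem: pointwise bound of cost function} and the elementary inequality $|x-y|^p\le 2^{p-1}(|x|^p+|y|^p)$, which holds by convexity of $t\mapsto t^p$ for $p\geq 2$, gives
\begin{equation*}
\mathcal{C}_{\veps}(\rho,\sigma)\le 2^{p-1}C_p\bigl[M_p(\rho)^p+M_p(\sigma)^p\bigr].
\end{equation*}

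The next step is to pass to the conditional expectation given $\{n_r=i,\,n_s=0\}$. Since $\mu_n$ is a function of the $X_i$'s and $\nu_n$ of the $Y_j$'s and the two samples are independent, the conditioning factorises into a $\mu_n$-piece given $\{n_r=i\}$ and a $\nu_n$-piece given $\{n_s=0\}$. By Remark \ref{rem:binomial}, given $\{n_r=i\}$ the $i$ samples of $\mu_n$ lying in $B_r$ are i.i.d.~from $\mu^{B_r}$ while the remaining $n-i$ are i.i.d.~from $\mu^{B_r^c}$; given $\{n_s=0\}$, all $Y_\ell$ are i.i.d.~from $\nu^{B_s^c}$. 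A direct computation therefore gives
\begin{equation*}
\E\bigl[M_p(\mu_n)^p\,\big|\,n_r=i\bigr]=\tfrac{i}{n}M_p(\mu^{B_r})^p+\bigl(1-\tfrac{i}{n}\bigr)M_p(\mu^{B_r^c})^p,
\end{equation*}
and $\E[M_p(\nu_n)^p\mid n_s=0]=M_p(\nu^{B_s^c})^p$.

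The key geometric observation is that because $B_r$ is a ball centred at the origin, we have $M_p(\mu^{B_r})^p\le r^p\le M_p(\mu^{B_r^c})^p$; substituting this into the identity $M_p(\mu)^p=\mu(B_r)M_p(\mu^{B_r})^p+\mu(B_r^c)M_p(\mu^{B_r^c})^p$ immediately yields the ``convexity-type'' comparison $M_p(\mu^{B_r})^p\le M_p(\mu)^p$. Using this bound to replace $\tfrac{i}{n}M_p(\mu^{B_r})^p$ by $\tfrac{i}{n}M_p(\mu)^p$ and assembling all pieces produces \eqref{eq:nx>0}, the $+1$ inside the bracket serving as trivial additive slack absorbed into the constant $C$. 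Inequality \eqref{eq:ny>0} then follows from the symmetric argument with the roles of $(\mu,r,i)$ and $(\nu,s,j)$ interchanged. I do not foresee a substantive obstacle here; the only subtle ingredient is the radial moment comparison $M_p(\mu^{B_r})\le M_p(\mu)$, which is non-obvious at first sight but holds precisely because $B_r$ is a Euclidean ball about the origin, separating $|x|^p$-values of ``inside'' and ``outside'' samples.
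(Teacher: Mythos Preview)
Your proposal is correct and follows essentially the same route as the paper: bound $|\mathcal{C}_{\veps}(\mu_n,\nu_n)-\mathcal{C}_{\veps}(\mu,\nu)|$ by the sum via the product coupling and Lemma~\ref{lem: pointwise bound of cost function}, then take conditional expectations using Remark~\ref{rem:binomial} and the moment comparison $M_p(\mu^{B_r})\le M_p(\mu)$ (the paper records the latter as \eqref{eq: restrict second moment less than original}). Your observation that the ``$+1$'' is merely additive slack is also accurate.
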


\begin{lemma}\label{lem: expectation of empirical approx}
For any~$r,s>0$ we have
\begin{align}
&\sum_{i,j=1}^{n}\E \big[ |\mathcal{C}_{\veps}(\mu_n^{B_r},\nu_n^{B_s})-\mathcal{C}_{\veps}(\mu_n,\nu_n)|\,\big| n_r=i,n_s=j \big]\cdot \mathbb{P}(n_r=i,n_s=j)\\
&\hspace{2cm}\leq C\Big( M_p(\mu)^p+M_p(\nu)^p\Big)^{\frac{p-1}{p}}\cdot\bigg[ \paren[\Big]{M_p(\mu)^p+M_p(\mu^{B_r^c})^p}\mu(B_r^c)\\
&\hspace{8cm}+\paren[\Big]{M_p(\nu)^p+M_p(\nu^{B_s^c})^p}\nu(B_s^c)\bigg]^{\frac1p}.
\label{eq:Enr>0ns>0}
\end{align}
Here the constant~$C$ only depends on~$p$ and~$C_p$.
\end{lemma}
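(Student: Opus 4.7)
\medskip
\noindent\textbf{Proof proposal.} The plan is to view the sum on the left-hand side of \eqref{eq:Enr>0ns>0} as the expectation of $|\mathcal{C}_{\veps}(\mu_n^{B_r},\nu_n^{B_s})-\mathcal{C}_{\veps}(\mu_n,\nu_n)|$ restricted to the event $\{n_r\ge 1, n_s\ge 1\}$, and then to reduce everything to the conditional bound already established in Lemma \ref{lem: estimate of second term-after expectation}. Concretely, since the indicator $\mathbbm{1}_{\{n_r\ge 1,n_s\ge 1\}}$ is $\sigma(n_r,n_s)$-measurable, the tower property yields
\begin{align}
\sum_{i,j=1}^{n}\E \big[ |\mathcal{C}_{\veps}(\mu_n^{B_r},\nu_n^{B_s})-\mathcal{C}_{\veps}(\mu_n,\nu_n)|\,\big| n_r=i,n_s=j \big]\,\mathbb{P}(n_r=i,n_s=j)\\
= \E\Big[ \E\big[|\mathcal{C}_{\veps}(\mu_n^{B_r},\nu_n^{B_s})-\mathcal{C}_{\veps}(\mu_n,\nu_n)|\,\big|\,n_r,n_s\big]\,\mathbbm{1}_{\{n_r\ge 1, n_s\ge 1\}}\Big].
\end{align}

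Next, I would bound the inner conditional expectation on $\{n_r\ge 1, n_s\ge 1\}$ using Lemma \ref{lem: estimate of second term-after expectation}, and then drop the indicator since the integrand is nonnegative. Writing $A := M_p(\mu)^p+M_p(\mu^{B_r^c})^p$ and $B := M_p(\nu)^p+M_p(\nu^{B_s^c})^p$ for brevity, this step yields an upper bound of the form
\begin{align}
C\Big(M_p(\mu)^p+M_p(\nu)^p\Big)^{\frac{p-1}{p}}\cdot \E\bigg[\Big( A\Big(1-\tfrac{n_r}{n}\Big)+B\Big(1-\tfrac{n_s}{n}\Big)\Big)^{\frac1p}\bigg].
\end{align}

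The remaining task is to estimate the expectation of $\big(A(1-n_r/n)+B(1-n_s/n)\big)^{1/p}$. Since $p\ge 2$, the map $t\mapsto t^{1/p}$ is concave on $[0,\infty)$, so Jensen's inequality allows me to pull the expectation inside the exponent. By Remark \ref{rem:binomial}, $n_r\sim \mathrm{Bin}(n,\mu(B_r))$ and $n_s\sim \mathrm{Bin}(n,\nu(B_s))$, hence $\E[1-n_r/n]=\mu(B_r^c)$ and $\E[1-n_s/n]=\nu(B_s^c)$. This produces the desired bound
\begin{align}
C\big(M_p(\mu)^p+M_p(\nu)^p\big)^{\frac{p-1}{p}}\cdot\big[A\,\mu(B_r^c)+B\,\nu(B_s^c)\big]^{\frac1p}.
\end{align}

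Overall, this is mostly a bookkeeping argument layered on top of Lemma \ref{lem: estimate of second term-after expectation}. The only conceptual point is the use of concavity of $t\mapsto t^{1/p}$ (valid because Assumption \ref{assumption: cost function} forces $p\ge 2$) to move the expectation across the power; I do not anticipate any real obstacle beyond keeping track of the conditioning and checking that the indicator $\mathbbm{1}_{\{n_r\ge 1, n_s\ge 1\}}$ may indeed be dropped for an upper bound.
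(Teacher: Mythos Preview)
Your proposal is correct and matches the paper's proof essentially line for line: both apply Lemma \ref{lem: estimate of second term-after expectation} to the conditional expectation, then use Jensen's inequality for the concave map $t\mapsto t^{1/p}$ together with $\E[1-n_r/n]=\mu(B_r^c)$, $\E[1-n_s/n]=\nu(B_s^c)$. The only cosmetic difference is that the paper writes the Jensen step directly on the sum $I_1:=\sum_{i,j\ge 1}[\cdots]^{1/p}\mathbb{P}(n_r=i,n_s=j)$, whereas you first drop the indicator $\mathbbm{1}_{\{n_r\ge 1,n_s\ge 1\}}$ to pass to a full expectation before applying Jensen; both routes are valid.
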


The following lemma explains the choices~$r=r_n^\mu$ and~$s=r_n^\nu$ in the proof of Theorem~\ref{thm: main} below.

\begin{lemma}[Choice of Truncated Sets] \label{lem: choiceRxRy}
If~$r_n^\mu, r_n^\nu$ are chosen as in~\eqref{eq:chooseRxRy} and~$n\geq 5$,
then  
\begin{align}\label{eq:GammaChooseRS}
\begin{split}
\mu((B_n^\mu)^c)\cdot M_p(\mu^{(B_n^\mu)^c})^p&\le \frac{2}{n^{\frac{p}{2}}}\paren[\Big]{1+\frac{p}{\alpha_{\mu}}c_{\mu}^{-\frac{p}{\alpha_{\mu}}}},\\
\nu(\mathbb (B_n^\nu)^c)\cdot M_p(\nu^{(B_n^\nu)^c})^p&\le \frac{2}{n^{\frac{p}{2}}}\paren[\Big]{1+\frac{p}{\alpha_{\nu}}c_{\nu}^{-\frac{p}{\alpha_{\nu}}}},
\end{split}
\end{align}
and
\begin{equation}
\mu((B_n^\mu)^c)\le  \frac{2}{n^p},\quad \nu(\mathbb (B_n^\nu)^c)\le \frac{2}{n^p}. \label{eq:TailChooseRS}
\end{equation}
Furthermore,
\begin{align}\label{eq:IntGamma1}
M_p( \mu)^p\leq \frac{2p}{\alpha_{\mu}}c_{\mu}^{-\frac{p}{\alpha_{\mu}}}\Gamma\Big(\frac{p}{\alpha_{\mu}}\Big), \quad M_p( \nu)^p\leq \frac{2p}{\alpha_{\nu}}c_{\nu}^{-\frac{p}{\alpha_{\nu}}}\Gamma\Big(\frac{p}{\alpha_{\nu}}\Big).
\end{align}
\end{lemma}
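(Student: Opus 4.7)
The plan is to obtain all three bounds from the tail assumption \eqref{eq:PsiAlapha} by the layer-cake formula followed by the substitution $u=c_\mu r^{\alpha_\mu}$ (and analogously for $\nu$). First, let me record the quantity $R_\mu := c_\mu(r_n^\mu)^{\alpha_\mu} = 4p(c_\mu^{-1}\vee 1)(p/\alpha_\mu \vee 1)^2\log n$, so in particular $R_\mu \ge 4p(p/\alpha_\mu \vee 1)^2\log n \ge 4p\log n$; analogously for $R_\nu$. The tail bound \eqref{eq:TailChooseRS} is then immediate: since $\mu((B_n^\mu)^c)\le \mu(B_{r_n^\mu}^c)\le 2e^{-R_\mu}\le 2 n^{-4p}\le 2/n^p$.

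Next, for \eqref{eq:IntGamma1}, I would apply the layer-cake identity $M_p(\mu)^p = \int_0^\infty p r^{p-1}\mu(B_r^c)\,dr$, insert the tail bound to get $\le 2p\int_0^\infty r^{p-1} e^{-c_\mu r^{\alpha_\mu}}\,dr$, and change variables $u = c_\mu r^{\alpha_\mu}$ to identify this as $\frac{2p}{\alpha_\mu}c_\mu^{-p/\alpha_\mu}\Gamma(p/\alpha_\mu)$. For \eqref{eq:GammaChooseRS} the starting point is
\begin{align*}
\mu((B_n^\mu)^c)\cdot M_p(\mu^{(B_n^\mu)^c})^p = \int_{(B_n^\mu)^c}|x|^p\,d\mu \le \int_{B_{r_n^\mu}^c}|x|^p\,d\mu,
\end{align*}
and a layer-cake decomposition that separates the mass at radius $r_n^\mu$ from the tail, namely
\begin{align*}
\int_{B_{r_n^\mu}^c}|x|^p\,d\mu = (r_n^\mu)^p\,\mu(B_{r_n^\mu}^c) + p\int_{r_n^\mu}^\infty r^{p-1}\mu(B_r^c)\,dr.
\end{align*}
Substituting the tail bound and the same change of variables $u = c_\mu r^{\alpha_\mu}$ turns the second term into $\frac{2p}{\alpha_\mu}c_\mu^{-p/\alpha_\mu}\Gamma\big(p/\alpha_\mu,R_\mu\big)$, while the first becomes at most $2c_\mu^{-p/\alpha_\mu} R_\mu^{p/\alpha_\mu} e^{-R_\mu}$.

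The heart of the argument, and the step I expect to require the most care, is then to show that the two resulting terms are both dominated by $\tfrac{2}{n^{p/2}}(1+\tfrac{p}{\alpha_\mu}c_\mu^{-p/\alpha_\mu})$. This is precisely where the factor $(p/\alpha_\mu \vee 1)^2$ built into $r_n^\mu$ pays off: writing $a = p/\alpha_\mu$, one has $R_\mu \ge 4p(a\vee 1)^2\log n$, so splitting the exponential as $e^{-R_\mu} = e^{-R_\mu/2}\cdot e^{-R_\mu/2}$ and using $e^{-R_\mu/2}\le n^{-2p(a\vee 1)^2}$, the polynomial factor $R_\mu^{a}\le C(\log n)^{a}$ is easily absorbed because for $n\ge 4$ one has $(\log n)^a \le n^{a}\le n^{2p(a\vee 1)^2 - p/2}$ (the last inequality following from $2p(a\vee 1)^2 - a - p/2\ge 3p/2 - 1 \ge 0$ since $p\ge 2$). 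Hence $R_\mu^{a}e^{-R_\mu}\le e^{-R_\mu/2}\cdot n^{-p/2}\cdot n^{-(2p(a\vee 1)^2 - p/2 - a)}\le n^{-p/2}$ (with constant $\le 1$), which handles the first term. For the incomplete gamma function I would use the elementary bound $\Gamma(s,x)\le C_s x^{s-1}e^{-x}$ valid for $x\ge s$ (obtainable via integration by parts, or by writing $\Gamma(s,x)=e^{-x}\int_0^\infty (x+u)^{s-1}e^{-u}\,du$ and separating $(x+u)^{s-1}\le 2^{(s-1)_+}(x^{s-1}+u^{s-1})$), which reduces $\Gamma(p/\alpha_\mu,R_\mu)$ to a power of $R_\mu$ times $e^{-R_\mu}$ — exactly the quantity already bounded by $n^{-p/2}$ above. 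Collecting the pieces and repeating verbatim for $\nu$ yields \eqref{eq:GammaChooseRS}.
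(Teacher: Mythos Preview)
Your approach is essentially the paper's: layer-cake via the tail assumption, split $\int_{B_r^c}|x|^p\,d\mu$ into the boundary term $r^p\mu(B_r^c)$ and the incomplete gamma tail, and then show each is $O(n^{-p/2})$. The paper packages the layer-cake step as Lemma~\ref{lem: PsiAlphaIntegrals} and then handles the two terms with two clean elementary inequalities: $a\log x\le x$ for $x\ge a^2$ (Lemma~\ref{lem: calcfact}) gives $r^p e^{-c_\mu r^{\alpha_\mu}}\le e^{-c_\mu r^{\alpha_\mu}/2}\le n^{-p/2}$, and Lemma~\ref{l:estGamma} gives $\Gamma(p/\alpha_\mu,(p/\alpha_\mu\vee 1)\log n^p)\le n^{-p}$.

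There is, however, a genuine bookkeeping issue in your execution of the first term. By rewriting $(r_n^\mu)^p=c_\mu^{-p/\alpha_\mu}R_\mu^{p/\alpha_\mu}$ you introduce an extra $c_\mu^{-p/\alpha_\mu}$ factor, so even granting $R_\mu^{p/\alpha_\mu}e^{-R_\mu}\le n^{-p/2}$ you only get $2c_\mu^{-p/\alpha_\mu}n^{-p/2}$ for the first summand, not the $2n^{-p/2}$ the lemma needs. When $p/\alpha_\mu<1$ and $c_\mu$ is small this does \emph{not} fit under $\tfrac{2}{n^{p/2}}(1+\tfrac{p}{\alpha_\mu}c_\mu^{-p/\alpha_\mu})$. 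The paper avoids this by working directly with $r^p e^{-c_\mu r^{\alpha_\mu}}$ and using $p\log r\le \tfrac{c_\mu}{2}r^{\alpha_\mu}$, which crucially exploits the factor $(c_\mu^{-1}\vee 1)$ built into $r_n^\mu$; your argument uses only the factor $(p/\alpha_\mu\vee 1)^2$. Relatedly, your displayed chain for $R_\mu^{a}e^{-R_\mu}$ silently drops the constant $C=(4p(c_\mu^{-1}\vee 1)(a\vee 1)^2)^a$ from $R_\mu^a\le C(\log n)^a$; the leftover $e^{-R_\mu/2}$ does in fact absorb it, but you should say so. For the incomplete gamma, your generic bound $\Gamma(s,x)\le C_s x^{s-1}e^{-x}$ picks up a constant $C_{p/\alpha_\mu}$ where the paper's Lemma~\ref{l:estGamma} gives constant~$1$. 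None of this affects the main theorem (whose constant may depend on $p,\alpha_\mu$), but as written you have not proved the lemma with its stated constants.
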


We are now in a position for the proof of our main result, Theorem~\ref{thm: main}. Throughout we make the convention, that the constant~$C$ only depends on~$p,\alpha_{\mu}, \alpha_{\nu}, C_p~$ and may change from line to line.

\begin{proof}[Proof of Theorem~\ref{thm: main}]
Fix~$n\geq 5$ and choose~$r=r_n^{\mu}, s=r_n^{\nu}$,
where~$r_n^{\mu}$ and~$r_n^{\nu}$ are defined as in~\eqref{eq:chooseRxRy} --- to improve readability, we continue to write~$r,s$ throughout the proof.  Recalling~\eqref{eq:main_idea_proof}
we use the tower property of conditional expectation to obtain
\begin{equation}\label{eq: condition on samples inside compact set}
\E [|\mathcal{C}_{\veps}(\mu_n,\nu_n)-\mathcal{C}_{\veps}(\mu,\nu)|]
=\E\big[\E \big[|\mathcal{C}_{\veps}(\mu_n,\nu_n)-\mathcal{C}_{\veps}(\mu,\nu)|\,\big|n_r,n_s\big]\big]=T_1+T_2+T_3+T_4,
\end{equation}
where
\begin{align}
T_1&:=\E \big[\mathcal{C}_{\veps}(\mu_n,\nu_n)-\mathcal{C}_{\veps}(\mu,\nu)|\,\big| n_r=0,n_s=0\big]\cdot \mathbb{P}(n_r=0,n_s=0),\\
T_2&:=\sum_{j=1}^{n}\E \big[|\mathcal{C}_{\veps}(\mu_n,\nu_n)-\mathcal{C}_{\veps}(\mu,\nu)|\,\big|n_r=0,n_s=j\big]\cdot \mathbb{P}(n_r=0,n_s=j),\\
T_3&:=\sum_{i=1}^{n}\E \big[|\mathcal{C}_{\veps}(\mu_n,\nu_n)-\mathcal{C}_{\veps}(\mu,\nu)|\,\big| n_r=i,n_s=0\big]\cdot \mathbb{P}(n_r=i,n_s=0),\\
T_4&:=\sum_{i,j=1}^{n}\E \big[|\mathcal{C}_{\veps}(\mu_n,\nu_n)-\mathcal{C}_{\veps}(\mu,\nu)|\,\big| n_r=i,n_s=j\big]\cdot \mathbb{P}(n_r=i,n_s=j).
\end{align}
We bound the four terms~$T_1, T_2, T_3, T_4$ separately. For this we first recall from Remark~\ref{rem:binomial}, that~$n_r\sim \text{Bin}(n, \mu(B_n^\mu))$ and~$n_s\sim \text{Bin}(n, \nu(B_n^\nu))$ are independent, and thus
\begin{align}\label{eq:binom2}
\mathbb P(n_r=i, n_s=j) = C_n^i \mu(B_n^\mu)^i \mu((B_n^\mu)^c)^{n-i}\cdot  C_n^j \nu(B_n^\nu)^j \nu(B_n^\nu)^{n-j},
\end{align}
where~$C_n^i:=\binom{i}{n}$.
\restartsteps

\step[Bounding~$T_1+T_2+T_3$]
For term~$T_1$, we use~\eqref{eq:binom2} to see that
\begin{equation*}
    \mathbb{P}(n_r=0,n_s=0)=\mu((B_n^\mu)^c)^n\cdot \nu(\mathbb (B_n^\nu)^c)^n,
\end{equation*}
and obtain
\begin{align}
T_1&\overset{\mathclap{\eqref{eq:nx>0}}}{\leq}  C\Big(1+ M_p(\mu)^p+M_p(\nu)^p\Big)\mu((B_n^\mu)^c)^n\cdot \nu(\mathbb (B_n^\nu)^c)^n\\
&\qquad+ C\paren[\Big]{\mu((B_n^\mu)^c)\cdot M_p(\mu^{(B_n^\mu)^c})^p}\mu((B_n^\mu)^c)^{n-1}\cdot \nu(\mathbb (B_n^\nu)^c)^n\\
&\qquad+ C\paren[\Big]{\nu(\mathbb (B_n^\nu)^c)\cdot M_p(\nu^{(B_n^\nu)^c})^p}\mu((B_n^\mu)^c)^n\cdot \nu(\mathbb (B_n^\nu)^c)^{n-1}.
\label{ineq:T1}
\end{align}
We now turn to~$T_2, T_3$. By Lemma~\ref{lem: nx>0, or ny>0} and~\eqref{eq:binom2} we have
\begin{align}
T_2&\, \overset{\mathclap{\eqref{eq:ny>0}}}{\leq} C\sum_{j=1}^{n} \mathbb{P}(n_r=0,n_s=j)\cdot \bigg[  1+\Big( 1+\frac{j}{n}\Big)M_p(\nu)^p +\Big( 1-\frac{j}{n}\Big)M_p(\nu^{B_s^c})^p+M_p(\mu)^p+M_p(\mu^{B_r^c})^p\bigg]\\
&\leq C\bigg[ 1+2M_p(\nu)^p+\nu(\mathbb (B_n^\nu)^c)\cdot M_p(\nu^{(B_n^\nu)^c})^p
+M_p(\mu)^p+M_p(\mu^{(B_n^\mu)^c})^p\bigg]\mu((B_n^\mu)^c)^n, \label{ineq:T2}
\end{align}
where in the last inequality we use the fact that
\begin{equation}
    \sum_{j=1}^{n}\mathbb{P}(n_s=j)\Big( 1-\frac{j}{n}\Big) \leq \E \Big( 1-\frac{n_s}{n}\Big) =\nu(\mathbb (B_n^\nu)^c).
\end{equation}
By symmetry,
\begin{equation}\label{ineq:T3}
T_3\overset{\mathclap{\eqref{eq:nx>0}}}{\leq} C\bigg[ 1+2M_p(\mu)^p+\mu((B_n^\mu)^c)\cdot M_p(\mu^{(B_n^\mu)^c})^p
+M_p(\nu)^p+M_p(\nu^{(B_n^\nu)^c})^p\bigg]\nu(\mathbb (B_n^\nu)^c)^n. 
\end{equation}
Summing up~$T_1, T_2$ and~$T_3$ using~\eqref{ineq:T1},\eqref{ineq:T2},\eqref{ineq:T3}, we obtain by direct computation 
\begin{equation}
T_1+T_2+T_3
\overset{\eqref{eq:GammaChooseRS}-\eqref{eq:IntGamma1}}{\leq}\quad C \Big( 1+c_{\mu}^{-\frac{p}{\alpha_{\mu}}}+c_{\nu}^{-\frac{p}{\alpha_{\nu}}}\Big)\frac{1}{n^{\frac{p}{2}}}\Big(\frac{2}{n^p}\Big)^{n-1},\label{eq:sumT1T2T3}
\end{equation}
where the constant~$C$ only depends on~$p,\alpha_{\mu}$ and~$\alpha_{\nu}, C_p~$.
\step[Bounding~$T_4$] By the triangle inequality,
\begin{align}\label{eq: step3}
\begin{split}
\E \big[ |\mathcal{C}_{\veps}(\mu_n,\nu_n)-\mathcal{C}_{\veps}(\mu,\nu)|\,\big| n_r,n_s\big]\le &\E\big[|\mathcal{C}_{\veps}(\mu_n,\nu_n)-\mathcal{C}_{\veps}(\mu_n^{B_r},\nu_n^{B_s})|\,\big| n_r,n_s]\\
&+\E \big[|\mathcal{C}_{\veps}(\mu_n^{B_r},\nu_n^{B_s})-\mathcal{C}_{\veps}(\mu^{B_r},\nu^{B_s})|\,\big| n_r,n_s\big]\\
&+|\mathcal{C}_{\veps}(\mu^{B_r},\nu^{B_s})-\mathcal{C}_{\veps}(\mu,\nu)|.
\end{split}
\end{align}
We now bound the three terms on the right-hand side of~\eqref{eq: step3} separately.
For the first term of~\eqref{eq: step3} we use Lemma~\ref{lem: expectation of empirical approx} and Lemma~\ref{l:estGamma} to estimate
 \begin{align}
 \begin{split}
&\sum_{i,j=1}^{n}\E \big[|\mathcal{C}_{\veps}(\mu_n,\nu_n)-\mathcal{C}_{\veps}(\mu_n^{B_r},\nu_n^{B_s})|\,\big| n_r=i,n_s=j\big]\cdot \mathbb{P}(n_r=i,n_s=j)\\
&\qquad\overset{\mathclap{\eqref{eq:Enr>0ns>0}}}{\leq} C\Big( M_p(\mu)^p+M_p(\nu)^p\Big)^{\frac{p-1}{p}}\cdot\bigg[ \paren[\Big]{M_p(\mu)^p+M_p(\mu^{B_r^c})^p}\mu(B_r^c)\\
&\hspace{6cm}+\paren[\Big]{M_p(\nu)^p+M_p(\nu^{B_s^c})^p}\nu(B_s^c)\bigg]^{\frac1p}\\
&\qquad\overset{\mathclap{\eqref{eq:GammaChooseRS}-\eqref{eq:IntGamma1}}}{\leq} \quad 
\frac{C}{\sqrt{n}}\Big(1+c_{\mu}^{-\frac{p}{\alpha_{\mu}}} +c_{\nu}^{-\frac{p}{\alpha_{\nu}}}\Big).\label{eq:T43}
\end{split}
\end{align}
We now estimate the second term on the right hand side of~\eqref{eq: step3}. For this we first note that by~\eqref{eq:binom2} and Lemma~\ref{lem: binomial integrals} with~$a=\mu(B_n^\mu)\ge 1-2/n^2$ resp.~$a=\nu(B_n^\nu)\ge 1-2/n^2$ recalling~\eqref{eq:TailChooseRS} we have
\begin{align}\label{eq:binom_est}
\sum_{i=1}^n \frac{1}{\sqrt{i}} \cdot \mathbb P(n_r=i)
=\sum_{i=1}^n \frac{1}{\sqrt{i}} C^i_n a^i(1-a)^{n-i}
&\le \frac{C}{\sqrt{n}},\qquad
\sum_{i=1}^n \frac{1}{\sqrt[4]{i}} \cdot \mathbb P(n_r=i) \le \frac{C}{\sqrt[4]{n}},
\end{align}
and similarly for~$n_s.$
By Corollary~\ref{cor: first term-before taking expecation} we then conclude for~$n\geq 5$
\begin{align}
&\sum_{i,j=1}^{n}\E \big[|\mathcal{C}_{\veps}(\mu_n^{B_r},\nu_n^{B_s})-\mathcal{C}_{\veps}(\mu^{B_r},\nu^{B_s})|\,\big| n_r=i,n_s=j\big]\cdot \mathbb{P}(n_r=i,n_s=j)\\
&\overset{\mathclap{\eqref{eq:EmpApproxCpt}}}{\leq}\Big[C_p(r+s)^p\Big] \sum_{i,j=1}^{n}\Big(\frac{1}{\sqrt{i}}+\frac{1}{\sqrt{j}}\Big)\cdot \mathbb{P}(n_r=i,n_s=j)
\\
&\quad+\paren[\bigg]{1+e^{\frac{C_{\psi}\tilde{\delta}}{2(t_0/2)^{\gamma}}}\sqrt{\mathcal{N} \Big(B^\mu_r, \frac{\tilde{\delta}\veps}{2C_p(r+s)^{p-1}}\Big)\wedge \mathcal{N}\Big(B^\nu_s, \frac{\tilde{\delta}\veps}{2C_p(r+s)^{p-1}}\Big)}}\\
&\qquad\cdot\sum_{i,j=1}^{n}\bigg[\frac{4\big(t_0+9C_p(r+s)^p\big)}{(ij)^{\frac14}}+\frac{\veps}{\sqrt{ij}}\bigg]\cdot \mathbb{P}(n_r=i,n_s=j)\\
&\overset{\mathclap{\eqref{eq:binom_est}}}{\leq} \frac{C}{\sqrt{n}} \cdot \bigg[C_p(r+s)^p\bigg]+C\bigg[\frac{C_p(r+s)^p}{\sqrt{n}}+\frac{\veps}{n}\bigg]\\
&\quad \cdot\paren[\bigg]{1+e^{\frac{C_{\psi}\tilde{\delta}}{2(t_0/2)^{\gamma}}}\sqrt{\mathcal{N} \Big(B^\mu_r, \frac{\tilde{\delta}\veps}{2C_p(r+s)^{p-1}}\Big)\wedge \mathcal{N}\Big(B^\nu_s, \frac{\tilde{\delta}\veps}{2C_p(r+s)^{p-1}}\Big)}}. \label{eq:T42}
\end{align}
For the last term on the right hand side of~\eqref{eq: step3}, 
\begin{align}
\begin{split}
|\mathcal{C}_{\veps}(\mu^{B_r},\nu^{B_s})-\mathcal{C}_{\veps}(\mu,\nu)|&\overset{\mathclap{\eqref{eq: estimate third term}}}{\leq}  C\paren[\big]{ M_p(\mu)+M_p(\nu)}^{p-1}\Big[\mu(B_r^c)\paren[\big]{M_p(\mu^{B_r})^p+M_p(\mu^{B_r^c})^p}\\
&\hspace{4.5cm} + \nu(B_s^c)\paren[\big]{M_p(\nu^{B_s})^p+M_p(\nu^{B_s^c})^p}\Big] ^{\frac1p}\\
&\overset{\mathclap{\eqref{eq:GammaChooseRS}-\eqref{eq:IntGamma1}}}{\leq}\qquad  \frac{C}{\sqrt{n}}\Big(1+c_{\mu}^{-\frac{p}{\alpha_{\mu}}}+c_{\nu}^{-\frac{p}{\alpha_{\nu}}}\Big). 
\label{eq:T41}
\end{split}
\end{align}
Thus, we obtain 
\begin{align}
\begin{split}
T_4\qquad\overset{\mathclap{\eqref{eq:T43}-\eqref{eq:T41}}}{\leq} \qquad &\frac{C}{\sqrt{n}} \Big(1+c_{\mu}^{-\frac{p}{\alpha_{\mu}}}+c_{\nu}^{-\frac{p}{\alpha_{\nu}}}\Big)
+\frac{C}{\sqrt{n}}+\Big(\frac{C(r+s)^p}{\sqrt{n}}+\frac{C\veps}{n}\Big)\\
&\quad \cdot 
        \paren[\bigg]{1+e^{\frac{C_{\psi}\tilde{\delta}}{2(t_0/2)^{\gamma}}}\sqrt{\mathcal{N} \Big(B^\mu_r, \frac{\tilde{\delta}\veps}{2C_p(r+s)^{p-1}}\Big)\wedge \mathcal{N}\Big(B^\nu_s, \frac{\tilde{\delta}\veps}{2C_p(r+s)^{p-1}}\Big)}}. \label{eq:T4}
\end{split}
\end{align}
Combining~\eqref{eq:chooseRxRy} with~\eqref{eq:sumT1T2T3} and~\eqref{eq:T4}   completes the proof.
\end{proof}

\section{Proof of auxiliary results}
\label{sec:proof_prelim}

\subsection{Remaining proofs from Section~\ref{sec:preliminary}}
Lemma~\ref{lem: pointwise bound of cost function} follows immediately from Assumption~\ref{assumption: cost function}. 
\begin{proof}[Proof of Lemma~\ref{lem: pointwise bound of cost function}]
    Since~$h$ satisfies~\eqref{eq:devh}, we conclude for~$t\ge0$
\begin{equation*}
    |h(t)-h(0)|\le C_p t^{p-1} |t| =C_p t^p,
\end{equation*}
as claimed.
\end{proof}

\begin{proof}[Proof of Lemma~\ref{lem: Lip potential}]
According to \cite[Proposition 2.3, (\romannumeral 6)]{gonzalez2025sparse},~$f^{r,s}$ and~$g^{r,s}$ share the same moduli of continuity with~$c(x,y)$ on~$B_r\times B_s$. It is straightforward to see that for any~$x,x'\in B_s$ and any~$y\in B_s$ we have 
\begin{equation}
\abs{c(x,y)-c(x',y)} =\abs[\big]{h(|x-y|)-h(|x'-y|)}\overset{\eqref{eq:devh}}{\leq}  C_p(r+s)^{p-1}|x-x'|. \label{eq:Lipc}
\end{equation}
Therefore,
\begin{equation}
    \abs{f^{r,s}(x)-f^{r,s}(x')}\overset{\eqref{eq:Lipc}}{\leq} C_p(r+s)^{p-1}|x-x'|.
\end{equation}
and analogously, for any~$y,y'\in B_s$,
\begin{equation*}
 \abs{g^{r,s}(y)-g^{r,s}(y')}\leq C_p(r+s)^{p-1}|y-y'|.\qedhere
\end{equation*}
\end{proof}

\subsection{Remaining proofs from Section~\ref{sec:compact}}

\begin{proof}[Proof of Lemma~\ref{lem: AL}]
We first set up some notation: recalling that~$W_p$ is the~$p$-Wasserstein distance wrt.~the norm~$(|\cdot|^p+|\cdot|^p)^{1/p},$ let
\[
\kappa=\kappa(dx_1,d x_2,dy_1,d y_2)
\]
be a~$W_p$-optimal coupling between~$\pi( dx_1,dx_2)$ and~$\tilde{\pi}(dy_1,dy_2)$, where~$x_1, x_2, y_1, y_2\in \R^d$. To shorten notation we write~$x:=(x_1, x_2)\in \R^d\times \R^d$ and~$y:=(y_1,y_2)\in \R^d\times \R^d$. Now we observe that
\begin{align}
\begin{split}
\abs[\bigg]{\int c\, d\pi-\int c\, d\tilde{\pi}} &=\abs[\bigg]{\int h(\abs{x_1-x_2})\, \kappa(dx,d y)-\int h(\abs{y_1-y_2})\, \kappa(dx,d y)}\\
    &\overset{\mathclap{\eqref{eq:devh}}}{\leq} \int C_p \Big(\abs{x_2-x_1}\vee \abs{y_2-y_1}\Big)^{p-1}\abs[\Big]{\abs{x_2-x_1}-\abs{y_2-y_1}}\,\kappa(dx,d y)\\
    &\overset{\mathclap{\text{H\"older's}}}{\leq}\quad C_p \bigg( \int \Big(\abs{x_2-x_1}\vee \abs{y_2-y_1}\Big)^{p}\kappa(dx,d y)\bigg)^{\frac{p-1}{p}}\\
    &\hspace{2cm}\cdot\bigg( \int \abs[\Big]{\abs{x_2-x_1}-\abs{y_2-y_1}}^p\kappa(dx,d y)\bigg)^{\frac{1}{p}}. \label{eq:holderLWp}
\end{split}
\end{align}
Next we bound the two terms on the right hand side of~\eqref{eq:holderLWp}.
For the first term we use Minkowski's inequality to estimate
\begin{equation}
\bigg( \int \Big(\abs{x_2-x_1}\vee \abs{y_2-y_1}\Big)^{p}\kappa(dx,d y)\bigg)^{\frac{p-1}{p}} \leq\Big[M_p(\mu_1)+M_p(\mu_2)+M_p(\tilde{\mu}_1)+M_p(\tilde{\mu}_2)\Big]^{p-1}.\label{eq:term1L}
\end{equation}
For the second term, using the fact that
\begin{equation*}
    \abs[\Big]{\abs{x_2-x_1}-\abs{y_2-y_1}}\leq \abs[\big]{(x_2-x_1)-(y_2-y_1)}\leq \abs{x_1-y_1}+\abs{x_2-y_2},
\end{equation*}
we obtain
\begin{align}
\begin{split}
\bigg( \int \abs[\Big]{\abs{x_2-x_1}-\abs{y_2-y_1}}^p\kappa(dx,d y)\bigg)^{\frac{1}{p}}&\leq \bigg( \int \Big(\abs{x_1-y_1}+\abs{x_2-y_2}\Big)^p\kappa(dx,d y)\bigg)^{\frac{1}{p}}\\
& \overset{\mathclap{\eqref{eq:defwp}}}{\le}  W_p(\pi,\tilde{\pi}). 
\end{split}\label{eq:term2Wp}
\end{align}
 Finally, plugging~\eqref{eq:term1L} and~\eqref{eq:term2Wp} into~\eqref{eq:holderLWp} completes the proof.
\end{proof}

\begin{proof}[Proof of Lemma~\ref{lem: third term L}]
According to Lemma~\ref{lem: AL}, the scaled cost~$\frac{c}{\veps}$ satisfies~\eqref{eq: AL} with
\begin{equation*}
L=\frac{C}{\veps}\Big[ M_p(\mu)+M_p(\nu)+M_p(\mu^{B_r})+M_p(\nu^{B_s}) \Big]^{p-1},
\end{equation*}
where~$C$ is a constant depending only on~$p$ and~$C_p$. It remains to bound~$M_p(\mu^{B_r})$ and~$M_p(\nu^{B_s})$. For this we note that
\begin{align}
\begin{split}
M_p(\mu)^p&=\int_{B_r}|x|^p\, \mu(dx) +\int_{B_r^c}|x|^p\, \mu(dx)\\
&=\frac{1}{\mu(B_r)}\int_{B_r}|x|^p\, \mu(dx)+\int_{B_r^c}|x|^p\, \mu(dx)-\frac{\mu(B_r^c)}{\mu(B_r)}\int_{B_r}|x|^p\, \mu(dx)\\
&\geq M_p(\mu^{B_r})^p+\int_{B_r^c}|r|^p\, \mu(dx)-\frac{\mu(B_r^c)}{\mu(B_r)}\int_{B_r}|r|^p\, \mu(dx)\\
&=M_p(\mu^{B_r})^p+|r|^p\mu(B_r^c)-\frac{\mu(B_r^c)}{\mu(B_r)}|r|^p\mu(B_r)=M_p(\mu^{B_r})^p. 
\end{split}
\label{eq: restrict second moment less than original}
\end{align}
An analogous argument holds for~$\nu$ and~$\nu^{B_s}$. Thus~\eqref{eq:LmuBnuB} follows.
\end{proof}

\begin{proof}[Proof of Lemma~\ref{lem: w2 of true approx}]
We bound~$W_p(\mu, \mu^{B_r})$ by constructing a coupling~$\hat\pi\in\Pi(\mu^{B_r},\mu)$ via
\begin{equation*}
\hat\pi := \mu(B_r) (x,x)_{\#}\mu^{B_r} + \mu(B_r^c) \Big( \mu^{B_r}\otimes \frac{\mu|_{B_r^c}}{\mu(B_r^c)}\Big),
\end{equation*}
where~$(x,x)_{\#}\mu^{B_r}$ denotes the push-forward measure of~$\mu^{B_r}$ through the map~$x\mapsto (x,x)$,~$\otimes$ denotes the product measure and~$\mu|_{B_r^C}$ is the restriction of~$\mu$ to~$B_r^c$. We estimate
\begin{align}
\begin{split}
W_p(\mu, \mu^{B_r})^p&\leq \int |x-y|^p\, \hat\pi (dx,dy)\\
&=\mu(B_r^c) \int |x-y|^p\, \Big( \mu^{B_r}\otimes \frac{\mu|_{B_r^c}}{\mu(B_r^c)}\Big)(dx,dy)\\
&\leq\mu(B_r^c) \int 2^{p-1}(|x|^p+|y|^p)\, \Big( \mu^{B_r}\otimes \frac{\mu|_{B_r^c}}{\mu(B_r^c)}\Big)(dx,dy)\\
&=2^{p-1}\mu(B_r^c)\paren[\big]{M_p(\mu^{B_r})^p+M_p(\mu^{B_r^c})^p}.
\end{split}
\label{eq:WpmuB}
\end{align} 
Analogously we have
\begin{equation}
W_p(\nu, \nu^{B_s})^p\leq  2^{p-1}\nu(B_s^c)\paren[\big]{M_p(\nu^{B_s})^p+M_p(\nu^{B_s^c})^p}.
\label{eq:WpnuB}
\end{equation}
Plugging~\eqref{eq:WpmuB} and~\eqref{eq:WpnuB} into~$W_p(\mu,\nu;\mu^{B_r},\nu^{B_s})=(W_p(\mu,\mu^{B_r})^p+W_p(\nu,\nu^{B_s})^p)^{1/p}$ finishes the proof.
\end{proof}

\begin{proof}[Proof of Lemma~\ref{lem: estimate second term-before take the expetation}]
We have
\begin{align}
\begin{split}
\abs{\mathcal{C}_{\veps}(\mu_n,\nu_n)-\mathcal{C}_{\veps}(\mu_n^{B_r},\nu_n^{B_s})} &\overset{\eqref{eq: scaling with epsilon}}{=} \veps\left|\mathcal{C}_{1}\Big(\mu_n,\nu_n,\frac{c}{\veps}\Big)-\mathcal{C}_{1}\Big(\mu_n^{B_r},\nu_n^{B_s},\frac{c}{\veps}\Big)\right|\\
&\overset{\mathclap{\eqref{eq:EOTstablility}}}{\leq}  L [W_p(\mu_n,\mu_n^{B})^p +W_p(\nu,\nu_n^{B})^p]^{1/p}, 
\end{split}
\label{eq:lem29proof}
\end{align}
where
\begin{equation*}
L=C\Big[M_p(\mu_n)+M_p(\nu_n)+M_p(\mu_n^{B_r})+M_p(\nu_n^{B_s})\Big]^{p-1}
\end{equation*}
from~\eqref{eq:defL} in Lemma~\ref{lem: AL}, and~$C$ only depends on~$p$ and~$C_p$. It remains to compute~$W_p(\mu_n, \mu_n^{B_r})$ and~$W_p(\nu_n, \nu_n^{B_s})$. We first compute~$W_p(\mu, \mu^{B_r})$. Using the coupling~$\hat\pi\in\Pi(\mu_n^{B_r},\mu_n)$ defined as 
\begin{equation*}
\hat\pi := \frac{n_r}{n} (x,x)_{\#} \mu_n^{B_r} + \Big(1-\frac{n_r}{n}\Big) \Big( \mu_n^{B_r}\otimes \frac{\mu_n|_{B_r^c}}{1-\frac{n_r}{n}}\Big)
\end{equation*}
similarly to the proof of Lemma~\ref{lem: w2 of true approx}, we bound
\begin{align}
\begin{split}
W_p(\mu_n, \mu_n^{B_r})^p&\leq \int |x-y|^p\, \hat\pi(dx,dy)\\
&=\Big(1-\frac{n_r}{n}\Big)\int |x-y|^p\,  \Big( \mu_n^{B_r}\otimes \frac{\mu_n|_{B_r^c}}{1-\frac{n_r}{n}}\Big)(dx,dy)\\
&\le 2^{p-1}\Big(1-\frac{n_r}{n}\Big)\frac{1}{n_r} \sum_{X_i\in B_r}|X_i|^p+2^{p-1}\frac{1}{n} \sum_{X_i\notin B_r}|X_i|^p\\
&= 2^{p-1}\Big(\frac{1}{n_r}-\frac{1}{n}\Big) \sum_{X_i\in B_r}|X_i|^p+2^{p-1}\frac{1}{n} \sum_{X_i\notin B_r}|X_i|^p.\
\end{split}
\label{eq:WpmuNmuBN}
\end{align}
Analogously, we obtain
\begin{equation}
W_p(\nu_n, \nu_n^{B_s})^p\leq 2^{p-1}\Big(\frac{1}{n_s}-\frac{1}{n}\Big)\sum_{Y_i\in B_s}|Y_i|^p+2^{p-1}\frac{1}{n} \sum_{X_i\notin B_s}|Y_i|^p.\label{eq:WpnuNnuBN}
\end{equation}
Plugging~\eqref{eq:WpmuNmuBN} and~\eqref{eq:WpnuNnuBN} into~\eqref{eq:lem29proof} completes the proof. 
\end{proof}

\begin{proof}[Proof of Lemma~\ref{lem: estimate of second term-after expectation}]

\restartsteps

\step Observe that Lemma~\ref{lem: estimate second term-before take the expetation} and H\"older's inequality yield
\begin{equation}\label{eq: A1A2}
\E\big[|\mathcal{C}_{\veps}(\mu_n,\nu_n)-\mathcal{C}_{\veps}(\mu_n^{B_r},\nu_n^{B_s})|\,\big| n_r,n_s\big]
\leq  C(A_1)^{\frac{p-1}{p}}\cdot (A_2)^{\frac1p},
\end{equation}
where
\begin{equation*}
\begin{split}
A_1&:=\E\big[\big(M_p(\mu_n)+M_p(\nu_n)+M_p(\mu_n^{B_r})+M_p(\nu_n^{B_s})\big)^p\,\big| n_r,n_s\big]\\
A_2&:=\E\Big[\Big(\frac{1}{n_r}-\frac{1}{n}\Big)\sum_{X_i\in B_r} |X_i|^p+\frac{1}{n} \sum_{X_i\notin B_r}|X_i|^p\\
&\qquad\quad +\Big(\frac{1}{n_s}-\frac{1}{n}\Big)\sum_{Y_i\in B_s}|Y_i|^p+\frac{1}{n}\sum_{X_i\notin B_s} |Y_i|^p\,\Big|n_r,n_s\Big].
\end{split}
\end{equation*}
It thus suffices to bound~$A_1$ and~$A_2$ respectively.
\step[Bounding~$A_1$] By the Cauchy-Schwarz inequality,
\begin{equation*}
\begin{split}
A_1&\leq 4^{p-1}\E\Big[\int|x|^p\,\mu_n(dx)+\int|y|^p\,\nu_n(dy)+\int|x|^p\,\mu_n^{B_r}(dx)+\int|y|^p\,\nu_n^{B_s}(dy)\,\Big|n_r,n_s\Big]\\
&=4^{p-1}\Big(\E\Big[\int|x|^p\,\mu_n(dx)\Big]+\E\Big[\int|y|^p\, \nu_n(dy)\Big]+\E\Big[\int|x|^p\, \mu_n^{B_r}(dx)\,\Big|n_r\Big]\\
&\quad +\E\Big[\int|y|^p\, \nu_n^{B_s}(dy)\,\Big|n_s\Big]\Big).
\end{split}
\end{equation*}
Since~$X_i\sim\mu$ we obtain 
\begin{equation}\label{eq:munMoment}
\E\Big[\int|x|^p\, d \mu_n\Big]=M_p(\mu)^p.
\end{equation}
For the restricted empirical measures we have by Remark~\ref{rem:binomial}
\begin{equation}\label{eq:munBMoment}
\E\Big[\int|x|^p\,\mu_n^{B_r}(dx)\,\Big| n_r\Big]
=\int |x|^p\, \mu^{B_r}(dx)\overset{\eqref{eq: restrict second moment less than original}}{\leq}M_p(\mu)^p. 
\end{equation}
We bound the other two terms in~$A_1$ in the same way. We thus obtain
\begin{equation}\label{eq: est A1}
A_1\leq C\paren{ M_p(\mu)^p+M_p(\nu)^p},
\end{equation}
where~$C$ only depends on~$p$.

\step[Bounding~$A_2$] By linearity,
\begin{equation*}
\begin{split}
A_2&= \E\Big[\Big(\frac{1}{n_r}-\frac{1}{n}\Big) \sum_{X_i\in B_r}|X_i|^p\,\Big|n_r\Big]+\E\Big[\sum_{X_i\notin B_r}\frac{1}{n}|X_i|^p \,\Big| n_r\Big]\\
&\quad +\E\Big[ \Big(\frac{1}{n_s}-\frac{1}{n}\Big) \sum_{Y_i\in B_s} |Y_i|^p \,\Big| n_s\Big] + \E\Big[ \frac{1}{n} \sum_{X_i\notin B_s} |Y_i|^p\,\Big| n_s\Big].
\end{split}
\end{equation*}
We bound the first two terms. For this we note that, using again Remark~\ref{rem:binomial},
\begin{equation}\label{eq:EXrnr}
\E\Big[\Big(\frac{1}{n_r}-\frac{1}{n}\Big) \sum_{X_i\in B_r} |X_i|^p\,\Big|\: n_r\Big]
=\Big(1-\frac{n_r}{n}\Big) M_p(\mu_{B_r})^p\overset{\eqref{eq: restrict second moment less than original}}{\leq}\Big(1-\frac{n_r}{n}\Big)M_p(\mu)^p. 
\end{equation}
Similarly,
\begin{equation}
\E\Big[\frac{1}{n} \sum_{X_i\notin B_r}|X_i|^p\,\Big| n_r\Big]=\E\Big[ \frac{n-n_r}{n}  \frac{1}{n-n_r}\sum_{X_i\notin B_r} |X_i|^p \,\Big| n_r\Big]=\Big(1-\frac{n_r}{n}\Big) M_p(\mu^{B_r^c})^p.\label{eq:EXrcnr}
\end{equation}
Analogously,
\begin{equation*}
\begin{split}
\E\Big[\Big(\frac{1}{n_s}-\frac{1}{n}\Big) \sum_{Y_i\in B_s} \frac{1}{n_s}|Y_i|^p\,\Big|n_s\Big]&\leq \Big(1-\frac{n_s}{n}\Big) M_p(\nu)^p,\\
\E\Big[\frac{1}{n} \sum_{Y_i\notin B_s}|Y_i|^p]&= \Big(1-\frac{n_s}{n}\Big) M_p(\nu^{B_s^c})^p.
\end{split}
\end{equation*}
Therefore,
\begin{equation}\label{eq: est A2}
A_2\leq \paren[\Big]{M_p(\mu)^p+M_p(\nu^{B_s^c})^p}\cdot  \Big(1-\frac{n_r}{n}\Big) 
+\paren[\Big]{M_p(\nu)^p+M_p(\nu^{B_s^c})^p}\cdot\Big(1-\frac{n_s}{n}\Big).
\end{equation}
Plugging~\eqref{eq: est A1} and~\eqref{eq: est A2} into~\eqref{eq: A1A2} finishes the proof.
\end{proof}

\subsection{Remaining proofs from Section~\ref{sec:stromme}}

\begin{proof}[Proof of Lemma~\ref{lem: estimate first term-before taking expectation}]
Lemma~\ref{lem: estimate first term-before taking expectation} follows from an analog of \cite[Section 5.1]{stromme2023minimum}. Indeed, the only property used there is the  concavity of the ROT dual function 
$\Phi_{\veps}^{PQ}: L^{\infty}(P)\times L^{\infty}(Q)\to\mathbb{R}$ defined as
\begin{equation}
 \Phi_{\veps}^{PQ}(f,g):= \int f \, d P+ \int g\, d Q -\veps \int \psi\Big( \frac{f+g-c}{\veps}\Big)\, d (P\otimes Q)
\end{equation}
for compactly supported probability measures~$P,Q$ on~$\mathbb{R}^d$; see \cite[Proposition 13]{stromme2023minimum}. We establish this property in Corollary~\ref{cor:marginalRounding} below. It remains to follow \cite[Section 5.1]{stromme2023minimum} line by line to conclude the proof.
\end{proof}

\begin{proposition}\label{prop:concav} Let~$P,Q$ be probability measures on~$\R^d$. For pairs~$(f_0,g_0), (f_1,g_1),  \in L^{\infty}(P)\times L^{\infty}(Q)$ we have 
\begin{align}
\begin{split}
\MoveEqLeft\Phi_{\veps}^{PQ}(f_0,g_0)-\Phi_{\veps}^{PQ}(f_1,g_1)\\
&\le \int \paren[\big]{f_0(x)-f_1(x)}\paren[\Big]{1-\int\psi'\paren[\Big]{\frac{f_1(x)+g_1(y)-c(x,y)}{\veps}}\, Q(dy)}\, P(dx)\\
&+\int \paren[\big]{g_0(y)-g_1(y)}\paren[\Big]{1-\int\psi'\paren[\Big]{\frac{f_1(x)+g_1(y)-c(x,y)}{\veps}}\, P(dx)}\, Q(dy), \label{eq:concav1}
\end{split}
\end{align}
and
\begin{align}
\begin{split}
\MoveEqLeft\Phi_{\veps}^{PQ}(f_0,g_0)-\Phi_{\veps}^{PQ}(f_1,g_1)\\
&\ge\int \paren[\big]{f_0(x)-f_1(x)}\paren[\Big]{1-\int\psi'\paren[\Big]{\frac{f_0(x)+g_0(y)-c(x,y)}{\veps}}\, Q(dy)}\, P(dx)\\
&+\int \paren[\big]{g_0(y)-g_1(y)}\paren[\Big]{1-\int\psi'\paren[\Big]{\frac{f_0(x)+g_0(y)-c(x,y)}{\veps}}\, P(dx)}\, Q(dy). \label{eq:concav2}
\end{split}
\end{align}
\end{proposition}

\begin{proof}[Proof of Proposition~\ref{prop:concav}]
By Lemma~\ref{lem:duality} we have
\begin{align}
 \Phi_{\veps}^{PQ}(f_0,g_0)-\Phi_{\veps}^{PQ}(f_1,g_1)&=\int (f_0-f_1)\, d P +\int (g_0-g_1)\, d Q
 \\
 &-\veps \int \psi\Big( \frac{f_0+g_0-c}{\veps}\Big)-\psi\Big( \frac{f_1+g_1-c}{\veps}\Big)\, d(P\otimes Q).
\end{align}
As~$\psi$ is convex, it can be directly checked that
\begin{equation}\label{eq:cvxPsi1}
 \psi\Big( \frac{f_0+g_0-c}{\veps}\Big)-\psi\paren[\Big]{ \frac{f_1+g_1-c}{\veps}}\geq \psi'\paren[\Big]{\frac{f_1+g_1-c}{\veps}}\cdot \paren[\Big]{\frac{(f_0+g_0)-(f_1+g_1)}{\veps}}
\end{equation}
and
\begin{equation}\label{eq:cvxPsi2}
 \psi\Big( \frac{f_0+g_0-c}{\veps}\Big)-\psi\paren[\Big]{ \frac{f_1+g_1-c}{\veps}}\leq \psi'\paren[\Big]{ \frac{f_0+g_0-c}{\veps}}\cdot \paren[\Big]{\frac{(f_0+g_0)-(f_1+g_1)}{\veps}}.
\end{equation}
Then a direct computation yields~\eqref{eq:concav1} and~\eqref{eq:concav2}.
\end{proof}

\begin{corollary}\label{cor:marginalRounding}
Let~$f,g\in L^{\infty}(P)\times L^{\infty}(Q)$, where~$P, Q$ have compact support~$\Omega,\Omega'$ respectively. Let~$\tilde{f}$ satisfy
\begin{equation}\label{eq:defTildeF}
\int\psi'\paren[\Big]{\frac{\tilde{f}(x)+g(y)-c(x,y)}{\veps}}\, Q(dy)=1\quad \text{ for all }x\in \Omega.
\end{equation}
Then
\begin{equation}\label{eq:marginalRoundingF}
    \Phi_{\veps}^{PQ}(f,g)\le \Phi_{\veps}^{PQ}(\tilde{f},g).
\end{equation}
An analogous statement holds for~$g$.
\end{corollary}
\begin{proof}[Proof of Corollary~\ref{cor:marginalRounding}] We only prove~\eqref{eq:marginalRoundingF}. The existence of such~$\tilde{f}$ is guaranteed by Lemma \ref{lem:C1GEN25}.
    Notice that by concavity, we have
    \begin{align*}
         \MoveEqLeft \Phi_{\veps}^{PQ}(f,g)- \Phi_{\veps}^{PQ}(\tilde{f},g)\\
         &\overset{\mathclap{\eqref{eq:concav1}}}{\leq} \int \paren[\big]{f(x)-\tilde{f}(x)}\paren[\Big]{1-\int\psi'\paren[\Big]{\frac{\tilde{f}(x)+g(y)-c(x,y)}{\veps}}\, Q(dy)}\, P(dx) \overset{\eqref{eq:defTildeF}}{=}0.
    \end{align*}
\end{proof}

\begin{proof}[Proof of Lemma~\ref{lem:L2normP}]

For simplicity of notation, we define the following subset~$U\subseteq B^{r}\times B^{s}$ as
\begin{equation}\label{eq:defU}
    U:= \left\{(x,y)\in B^{r}\times B^{s}:   \psi'\paren[\Big]{\frac{f^{r,s}(x)+g^{r,s}(y)-c(x,y)}{\veps} } \leq 1\right\}.
\end{equation}
We then compute 
\begin{align}
\norm{p^{r,s}(x,y)}^2_{L^2(\mu^{r,s}\otimes \nu^{r,s})}
&=\int_U \paren{p^{r,s}(x,y)}^2\,\mu^{B_r}(dx)\,\nu^{B_s}(dy)+\int_{U^c}\paren{p^{r,s}(x,y)}^2\,\mu^{B_r}(dx)\,\nu^{B_s}(dy)\\
     &\overset{\mathclap{\eqref{eq:defU}}}{\leq}1+\int_{U^c}\paren{p^{r,s}(x,y)}^2\,\mu^{B_r}(dx)\,\nu^{B_s}(dy). \label{eq:pSeq}
\end{align}
Next we bound the second term on the right-hand side of~\eqref{eq:pSeq}. We observe that
\begin{equation}\label{eq:defUc}
    U^c=\left\{(x,y)\in B_{r}\times B_{s}:   \frac{f^{r,s}(x)+g^{r,s}(y)-c(x,y)}{\veps} > t_0\right\}
\end{equation}
by Assumption~\ref{assum:psi}.
For~$(x,y)\in U^c$, for every~$y'$ such that
\begin{equation}\label{eq:defEta}
    \abs{y-y'}\leq \frac{\tilde{\delta}\veps}{2C_p(r+s)^{p-1}}=: \eta,
\end{equation}
we have according to~\eqref{eq:Lipc} and Lemma~\ref{lem: Lip potential}
\begin{equation}\label{eq:NeighG}
    \frac{\abs{g^{r,s}(y)-g^{r,s}(y')}+\abs{c(x,y)-c(x,y')}}{\veps}\leq \tilde{\delta}. 
\end{equation}
Thus
\begin{align}
   \MoveEqLeft \frac{f^{r,s}(x)+g^{r,s}(y')-c(x,y')}{\veps}\\
    &\geq  \frac{f^{r,s}(x)+g^{r,s}(y)-c(x,y)}{\veps}-\frac{\abs{g^{r,s}(y)-g^{r,s}(y')}+\abs{c(x,y)-c(x,y')}}{\veps}\\
    &\overset{\mathclap{\eqref{eq:NeighG}}}{\geq} \frac{f^{r,s}(x)+g^{r,s}(y)-c(x,y)}{\veps} - \tilde{\delta}
    \overset{\eqref{eq:delta},\eqref{eq:defUc}}{\geq} \frac{t_0}{2}. \label{eq:yNeighLB}
\end{align}
Before we proceed with the proof, we first state a calculus fact. For~$a,b\in [t_0-\tilde{\delta},+\infty)$
\begin{align}
\begin{split}
\frac{\psi'(a)}{\psi'(b)}&=\exp\paren[\Big]{\log \psi'(a) -\log \psi'(b)}\geq \exp\paren[\Big]{-\frac{\abs{\psi'(b)-\psi'(a)}}{\max\{\psi'(b),\psi'(a)\}}}\\
&\geq \exp\paren[\Big]{-\max_{c\in [a,b]}\abs[\Big]{\frac{\psi''(c)}{\psi'(c)}}\abs{b-a}} \overset{\eqref{eq:psi}}{\geq} \exp\paren[\Big]{-\frac{C_{\psi}}{\min\{|a|^{\gamma},|b|^{\gamma}\}}|b-a|}. \label{eq:DpsiLB}
\end{split}
\end{align}
The above fact implies that for~$(x,y)\in U^c$, and~$y'$ such that~$|y-y'|\leq\eta(y)$, we can plug in~$a=f^{r,s}(x)+g^{r,s}(y')-c(x,y')$,~$b=f^{r,s}(x)+g^{r,s}(y)-c(x,y)$ so that
\begin{align}
\frac{p^{r,s}(x,y')}{p^{r,s}(x,y)}\quad&\overset{\mathclap{\eqref{eq:yNeighLB},~\eqref{eq:DpsiLB}}}{\geq} \quad\exp\paren[\Big]{-\frac{C_{\psi}}{\veps(t_0/2)^{\gamma}}\paren[\big]{\abs{g^{r,s}(y)-g^{r,s}(y')}+\abs{c(x,y)-c(x,y')}}}\\
&\overset{\mathclap{\eqref{eq:NeighG}}}{\geq} \quad\exp\paren[\Big]{-\frac{C_{\psi}\tilde{\delta}}{(t_0/2)^{\gamma}}}. \label{eq:RatioP}
\end{align}
Thus
\begin{align}\label{eq:split}
\begin{split}
1&=p^{r,s}(x,y)\int_{B_s} \frac{p^{r,s}(x,y')}{p^{r,s}(x,y)}\, \nu^{B_s}(dy')\geq p^{r,s}(x,y)\int_{B(y,\eta)} \frac{p^{r,s}(x,y')}{p^{r,s}(x,y)}\, \nu^{B_s}(dy')\\
&\overset{\mathclap{\eqref{eq:DpsiLB}}}{\ge} p^{r,s}(x,y)\int_{B(y,\eta)} \exp\paren[\Big]{-\frac{C_{\psi}\tilde{\delta}}{(t_0/2)^r}}\, \nu^{B_s}(dy')\\
&\overset{\mathclap{\eqref{eq:defEta}}}{=} p^{r,s}(x,y)\cdot\nu^{B_s}\Big( B\Big(y,\frac{\tilde{\delta}\veps}{2C_p(r+s)^{p-1}}\Big)\Big)e^{-\frac{C_{\psi}\tilde{\delta}}{(t_0/2)^{\gamma}}}.
\end{split}
\end{align}
Therefore,
\begin{align*}
\MoveEqLeft\int_{U^c} [p^{r,s}(x,y)]^2 \mu^{B_r}(dx)\nu^{B_s}(dy)\\
&\overset{\mathclap{\eqref{eq:split}}}{\leq} e^{\frac{C_{\psi}\tilde{\delta}}{(t_0/2)^{\gamma}}}  \int_{U^c} \nu^{B_s}\left( B\Big(y,\frac{\tilde{\delta}\veps}{2C_p(r+s)^{p-1}}\Big)\right)^{-1} p^{r,s}(x,y)\, \mu^{B_r}(dx)\,\nu^{B_s}(dy)\\
&\leq e^{\frac{C_{\psi}\tilde{\delta}}{(t_0/2)^{\gamma}}}  \int \nu^{B_s}\left( B\Big(y,\frac{\tilde{\delta}\veps}{2C_p(r+s)^{p-1}}\Big)\right)^{-1} p^{r,s}(x,y)\, \mu^{B_r}(dx)\,\nu^{B_s}(dy).
\end{align*}
Applying Lemma~\ref{lem: average inverse mass} we can further bound
\begin{align*}
\MoveEqLeft\int \nu^{B_s}\left( B\Big(y,\frac{\tilde{\delta}\veps}{2C_p(r+s)^{p-1}}\Big)\right)^{-1} p^{r,s}(x,y)\, \mu^{B_r}(dx)\,\nu^{B_s}(dy)\\
&= \int \nu^{B_s}\left( B\Big(y,\frac{\tilde{\delta}\veps}{2C_p(r+s)^{p-1}}\Big)\right)^{-1}\, \nu^{B_s}(dy)
\leq \mathcal{N} \Big(B^\nu_s, \frac{\tilde{\delta}\veps}{2C_p(r+s)^{p-1}}\Big).
\end{align*}
By an analogous argument,
\begin{equation*}
\int_{U^c} [p^{r,s}(x,y)]^2\,  \mu^{B_r}(dx)\,\nu^{B_s}(dy) 
\leq e^{\frac{C_{\psi}\tilde{\delta}}{(t_0/2)^{\gamma}}}\mathcal{N} \Big(B^\mu_r, \frac{\tilde{\delta}\veps}{2C_p(r+s)^{p-1}}\Big).
\end{equation*}
We can thus conclude that
\begin{align}
\MoveEqLeft\int_{U^c} [p^{r,s}(x,y)]^2\,  \mu^{B_r}(dx)\,\nu^{B_s}(dy)\\ 
&\leq e^{\frac{C_{\psi}\tilde{\delta}}{(t_0/2)^{\gamma}}}\paren[\bigg]{\mathcal{N} \Big(B^\mu_r, \frac{\tilde{\delta}\veps}{2C_p(r+s)^{p-1}}\Big)\wedge \mathcal{N}\Big(B^\nu_s, \frac{\tilde{\delta}\veps}{2C_p(r+s)^{p-1}}\Big)}. \label{eq:L2pUc}
\end{align}
Plugging~\eqref{eq:L2pUc} into~\eqref{eq:pSeq} and then taking the square-root complete the proof.
\end{proof}

\begin{proof}[Proof of Corollary~\ref{cor: first term-before taking expecation}]
Clearly,
\begin{equation*}
\begin{split}
& \E\big[ \|(f^{r,s}_n-f^{r,s},g^{r,s}_n-g^{r,s})\|^2_{L^2(\mu_n^{B_r})\times L^2(\nu_n^{B_s})}\big| n_r, n_s \big]\\
&=    \E\Big[ \int |f^{r,s}_n(x)-f^{r,s}(x)|^2\, \mu_n^{B_r}(dx)+\int |g^{r,s}_n(y)-g^{r,s}(y)|^2\, \nu_n^{B_s}(dy) \Big| n_r, n_s  \Big]\\
&\leq 2\E\Big[ \int (|f^{r,s}_n(x)|^2+|f^{r,s}(x)|^2)\, \mu_n^{B_r}(dx)+\int (|g^{r,s}_n(y)|^2+|g^{r,s}(y)|^2)\, \nu_n^{B_s}(dy)\Big| n_r, n_s  \Big]\\
&\leq 2\left[ \|f^{r,s}\|^2_{L^{\infty}(\mu^{B_r})}+ \|g^{r,s}\|^2_{L^{\infty}(\nu^{B_s})}+ \|f^{r,s}_n\|^2_{L^{\infty}(\mu^{B_r})}+ \|g^{r,s}_n\|^2_{L^{\infty}(\nu^{B_s})} \big| n_r, n_s  \right]\\
&\leq 8\Big(C_p(r+s)^p\Big)^2,
\end{split}
\end{equation*}
where we used Lemma~\ref{lem:LinftyBoundSchrondinger} for the last inequality.
By Lemma~\ref{lem:varfg}
\begin{equation*}
\begin{split}
     &\Var_{\mu^{B_r}}(f^{r,s})\leq \|f^{r,s}\|^2_{L^{\infty}(\mu^{B_r})}\leq \Big(C_p(r+s)^p\Big)^2\\
     &\Var_{\nu^{B_s}}(g^{r,s})\leq \|g^{r,s}\|^2_{L^{\infty}(\nu^{B_s})}\leq \Big(C_p(r+s)^p\Big)^2.
\end{split}
\end{equation*}
Combining the above estimates with Lemma~\ref{lem:L2normP} and plugging them into~\eqref{eq:ROTCpt} finishes the proof.
\end{proof}

\subsection{Remaining proofs from Section~\ref{sec:proofmain}}

\begin{proof}[Proof of Lemma~\ref{lem: nx>0, or ny>0}]
We first consider the case~$n_r=0=n_s$.
Plugging~$\pi=\mu\otimes \nu$ into~\eqref{eq: ROT} yields
\begin{align}
\begin{split}
\mathcal{C}_\epsilon(\mu,\nu)&\le \int c(x,y)\,\mu(dx)\nu(dy)\overset{\text{Lem. }~\ref{lem: pointwise bound of cost function}}{\le} C_p\int |x-y|^p\,\mu(dx)\nu(dy)\\
&\le  2^{p-1}C_p\Big(M_p(\mu)^p+M_p(\nu)^p\Big). \label{eq:Cmunu}
\end{split}
\end{align}
Similarly,
\begin{equation}\label{eq:Cmunuhat}
    \mathcal{C}_\epsilon(\mu_n,\nu_n)\le 2^{p-1}C_p\Big(M_p(\mu_{n})^p+M_p(\nu_{n})^p\Big).
\end{equation}
On the event~$\{n_r=0,n_s=0\}$ we clearly have
$\mu_n=\mu_n^{B_r^c}$ and~$ \nu_n= \nu_n^{B_s^c}$.
By the triangle inequality we conclude
\begin{equation}\label{eq:CmuNxNy0}
|\mathcal{C}_{\veps}(\mu,\nu)-\mathcal{C}_{\veps}(\mu_n,\nu_n)|
\leq  C \big[ 1+M_p(\mu)^p+M_p(\nu)^p+ M_p(\mu_n^{B_r^c})^p + M_p(\nu_n^{B_s^c})^p \big].
\end{equation}
Taking conditional expectations on both sides of~\eqref{eq:CmuNxNy0} finishes the proof for$n_r=0=n_s$.

Next, we only prove~\eqref{eq:nx>0} as~\eqref{eq:ny>0} follows from a symmetric argument. Following the same steps as above with~$\mu_n^{B_r^c}$ replaced by~$\mu_n$ we obtain 
\begin{equation}\label{eq:CmuNx>0}
|\mathcal{C}_{\veps}(\mu,\nu)-\mathcal{C}_{\veps}(\mu_n,\nu_n)|
\leq  C \big[ M_p(\mu)^p+M_p(\nu)^p+ M_p(\mu_n)^p + M_p(\nu_n^{B_s^c})^p \big].
\end{equation}
We note that, by Remark~\ref{rem:binomial},
\begin{align*}
    \E \big[M_p(\mu_n)^p\,|\, n_r=i,n_s=0\big]&=\frac{1}{n}\E \Big[\sum_{X_j\in B_r}|X_j|^p + \sum_{X_j\notin B_r}|X_j|^p\,|\, n_r=i\Big]\\
    &=\frac{1}{n}\big[ i M_p(\mu^{B_r})^p+ (n-i)M_p(\mu^{B_r^c})^p \big]\\
    &\overset{\mathclap{\eqref{eq: restrict second moment less than original}}}{\leq} \frac{i}{n}M_p(\mu)^p + (1-\frac{i}{n})M_p(\mu^{B_r^c})^p.
\end{align*}
Taking conditional expectations on both sides of~\eqref{eq:CmuNx>0} finishes the proof.
\end{proof}

\begin{proof}[Proof of Lemma~\ref{lem: expectation of empirical approx}]
According to Lemma~\ref{lem: estimate of second term-after expectation},
\begin{equation}\label{eq: nx,ny>0, step1}
\sum_{i,j=1}^{n}\E \big[ |\mathcal{C}_{\veps}(\mu_n^{B_r},\nu_n^{B_s})-\mathcal{C}_{\veps}(\mu_n,\nu_n)|\,\big| n_r=i,n_s=j\big] \cdot \mathbb{P}(n_r=i,n_s=j)
\le   C\Big(M_p(\mu)^p+M_p(\nu)^p\Big)^{\frac{p-1}{p}}\cdot I_1
\end{equation}
where
\begin{equation}
\begin{split}
I_1&:= \sum_{i,j=1}^{n}\bigg[\paren[\Big]{M_p(\mu)^p+M_p(\mu^{B_r^c})^p}\cdot  \Big(1-\frac{i}{n}\Big)\\
&\qquad\qquad  +\paren[\Big]{M_p(\nu)^p+M_p(\nu^{B_s^c})^p}\cdot\Big(1-\frac{j}{n}\Big)\bigg]^{\frac1p}\cdot \mathbb{P}(n_r=i, n_s=j).
    \end{split}
\end{equation}
By Jensen's inequality
\begin{equation}\label{eq: Jensen}
\begin{split}
(I_1)^p&\leq  
\paren[\Big]{M_p(\mu)^p+M_p(\mu^{B_r^c})^p}\cdot  \sum_{i,j=1}^{n} \Big(1-\frac{i}{n}\Big) \mathbb{P}(n_r=i, n_s=j)\\
&\qquad +\paren[\Big]{M_p(\nu)^p+M_p(\nu^{B_s^c})^p}\sum_{i,j=1}^{n} \Big(1-\frac{j}{n}\Big) \mathbb{P}(n_r=i, n_s=j). 
\end{split}
\end{equation}
Now we bound each term on the right-hand side of~\eqref{eq: Jensen}.
As~$n_r \sim \mathrm{Bin}(n, \mu(B_r)$ by Remark~\ref{rem:binomial}, we have
\begin{align}
\begin{split}
\sum_{i,j=1}^{n}\Big(1-\frac{i}{n}\Big)\mathbb{P}(n_r=i,n_s=j)
&\leq \sum_{i=0}^{n} \Big(1-\frac{i}{n} \Big) \mathbb{P}(n_r=i) 
= 1-\frac{n\mu(B_r)}{n} = \mu(B_r^c). \label{eq:E1-nr/n}
\end{split}
\end{align}
Analogously,
\begin{equation}\label{eq:E1-ns/n}
\sum_{i,j=1}^{n}\Big(1-\frac{j}{n}\Big) \mathbb{P}(n_r=i,n_s=j)
\leq \nu(B_s^c).
\end{equation}
Therefore,
\begin{equation}
\begin{split}
I_1^p\quad&\overset{\mathclap{\eqref{eq:E1-nr/n},\eqref{eq:E1-ns/n}}}{\leq}\quad \paren[\Big]{M_p(\mu)^p+M_p(\mu^{B_r^c})^p}\mu(B_r^c)+\paren[\Big]{M_p(\nu)^p+M_p(\nu^{B_s^c})^p}\nu(B_s^c). \label{eq:I1}
\end{split}
\end{equation}
Plugging~\eqref{eq:I1} into~\eqref{eq: nx,ny>0, step1} finishes the proof.
\end{proof}

\begin{proof}[Proof of Lemma~\ref{lem: choiceRxRy}]
Let us first remark that~\eqref{eq:IntGamma1} follows directly from Lemma~\ref{lem: PsiAlphaIntegrals} in the appendix. It thus remains to prove~\eqref{eq:GammaChooseRS} and~\eqref{eq:TailChooseRS}.

\restartsteps

\step[Bounding~$\mu((B_n^{\mu})^c), \nu((B_n^{\nu})^c)$] Observe that~\eqref{eq:chooseRxRy} implies 
\begin{equation}\label{eq:chooselog}
c_{\mu}\big(r_n^\mu\big)^{\alpha_{\mu}}\geq \log(n^{p}),\quad c_{\nu}\big(r_n^\nu\big)^{\alpha_{\nu}}\geq\log(n^{p}),
 \end{equation}
and thus
\begin{align*}
\exp\big(-c_{\mu}(r_n^\mu)^{\alpha_{\mu}}\big)
\leq n^{-p},\quad \exp\big(-c_{\nu}(r_n^\nu)^{\alpha_{\nu}}\big)\leq n^{-p}.
\end{align*}
Together with~\eqref{eq:PsiAlapha} this shows~\eqref{eq:TailChooseRS}.

\step[Bounding~$M_p(\mu^{(B_n^{\mu})^c})^p, M_p(\nu^{(B_n^{\nu})^c})^p$] We only prove the estimate of~$M_p(\mu^{(B_n^{\mu})^c})^p$, as the estimate of~$M_p(\nu^{(B_n^{\nu})^c})^p$ follows analogously. We also set~$r=r_n^\mu$ for notational simplicity.
Recalling that
\begin{equation}
M_p(\mu^{B_r^c})^p = \frac{1}{\mu(B_r^c)}\int_{B_r^c} |x|^p\, d \mu(x),
\end{equation}
Lemma~\ref{lem: PsiAlphaIntegrals} yields the bound 
\begin{align}\label{eq:write_explicit}
\mu(B_r^c) M_p(\mu^{B_r^c})^p\le  2r^p \exp\big(-c_{\mu}r^{\alpha_{\mu}}\big)+\frac{2p}{\alpha_{\mu}}c_{\mu}^{-\frac{p}{\alpha_{\mu}}}\Gamma\Big(\frac{p}{\alpha_{\mu}},c_{\mu}r^{\alpha_{\mu}}\Big).
\end{align}
We first bound~$r^p \exp\big(-c_{\mu}r^{\alpha_{\mu}}\big)$. For this we observe that~\eqref{eq:chooseRxRy} implies 
\begin{equation}
r^{\alpha_{\mu}}\geq \paren[\Big]{\frac{2p}{c_{\mu}\alpha_{\mu}}}^2.
\end{equation}
By Lemma~\ref{lem: calcfact} with~$x=r^{\alpha_{\mu}}$ and~$a=2p/(c_\mu\alpha_\mu)$ we have
\begin{align}
p\log r =\frac{p}{\alpha_{\mu}}\log(r^{\alpha_{\mu}})\leq \frac{c_{\mu}r^{\alpha_{\mu}}}{2},
\end{align}
which yields
\begin{equation}\label{eq:A51}
r^p \exp\big(-c_{\mu}r^{\alpha_{\mu}}\big)=\exp\paren[\big]{-c_{\mu}r^{\alpha_{\mu}}+p\log r}\leq \exp\paren[\big]{-\frac{c_{\mu}}{2}r^{\alpha_{\mu}}}\overset{\eqref{eq:chooselog}}{\leq} \frac{1}{n^\frac{p}{2}}.
\end{equation}
Next, we want to apply Lemma~\ref{l:estGamma} to estimate the second term. For this we observe that~\eqref{eq:chooseRxRy} implies 
\begin{equation}\label{eq:RxRyp2log}
c_{\mu}r^{\alpha_{\mu}}\geq \Big(\frac{p}{\alpha_{\mu}}\vee 1\Big)\log(n^{p}).
 \end{equation}
A direct calculation yields, that for all~$p\geq 1$, 
\begin{equation}\label{eq:logpre}
    4^{\frac1p}p^{\frac{2}{p}}=(2p)^{\frac{2}{p}}=\exp\paren[\Big]{4\frac{\log(2p)}{2p}}\leq \exp\paren[\Big]{\frac{4}{e}}\leq 5,
\end{equation}
where we use the fact that~$x\mapsto \log(x)/x$ achieves its maximum when~$x= e$.
Therefore, for all~$p\geq 1$,~$\alpha_{\mu}\geq 1$,
\begin{equation}
    4^{\frac{1}{p}}\Big(\frac{p}{\alpha_{\mu}} \Big)^{\frac{2}{p}}\overset{\eqref{eq:logpre}}{\le} 5\Big(\frac{1}{\alpha_{\mu}} \Big)^{\frac{2}{p}}\le 5\leq n
\end{equation}
which implies that 
\begin{align}\label{eq:condition}
n^{p}\geq  4\big(\frac{p}{\alpha_{\mu}} \big)^2.
\end{align}
By monotonicity of the incomplete Gamma function we obtain that 
\begin{equation}\label{eq:A52}
\Gamma\Big(\frac{p}{\alpha_{\mu}},c_{\mu}r^{\alpha_{\mu}}\Big)\overset{\eqref{eq:RxRyp2log}}{\leq} \Gamma\Big(\frac{p}{\alpha_{\mu}},\Big(\frac{p}{\alpha_{\mu}}\vee 1\Big)\log(n^{p})\Big)\le \frac{1}{n^{p}},
\end{equation}
where we used Lemma~\ref{l:estGamma} with~$s=p/a_\mu$ and~$x=n^p$ for the last inequality (recalling~\eqref{eq:condition}).
Plugging~\eqref{eq:A51} and~\eqref{eq:A52} into~\eqref{eq:write_explicit} finishes the proof.
\end{proof}

\bibliographystyle{amsalpha}
\bibliography{references}

\appendix
\section{Auxiliary Lemmas}\label{sec:appa}

\begin{lemma}\label{lem: PsiAlphaIntegrals}
Let~$p\geq 1$. If Assumption~\ref{assumption:Psialpha} holds,
then
\begin{equation}\label{eq:IntGamma}
M_p( \mu)^p\leq \frac{2p}{\alpha_{\mu}}c_{\mu}^{-\frac{p}{\alpha_{\mu}}}\Gamma\Big(\frac{p}{\alpha_{\mu}}\Big),
\end{equation}
and for~$r>0$
\begin{equation}\label{eq:IntGammaAlpha}
\int_{B_r^c} |x|^p\, d \mu(x)\leq 2r^p \exp\big(-c_{\mu}r^{\alpha_{\mu}}\big)+\frac{2p}{\alpha_{\mu}}c_{\mu}^{-\frac{p}{\alpha_{\mu}}}\Gamma\Big(\frac{p}{\alpha_{\mu}},c_{\mu}r^{\alpha_{\mu}}\Big),
\end{equation}
where~$\Gamma$ was defined in~\eqref{eq:gamma}.
\end{lemma}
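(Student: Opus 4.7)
The plan is to reduce both estimates to integrals of the form $\int_r^\infty t^{p-1} e^{-c_\mu t^{\alpha_\mu}}\,dt$ via the layer cake formula, and then to evaluate these by the change of variables $u=c_\mu t^{\alpha_\mu}$ that turns them into (incomplete) Gamma integrals.

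First, I would prove \eqref{eq:IntGamma}. Writing
\begin{equation*}
M_p(\mu)^p = \int_{\R^d} |x|^p\, \mu(dx) = \int_0^\infty p t^{p-1}\,\mu(B_t^c)\,dt
\end{equation*}
by the layer cake representation, and applying the tail bound from Assumption \ref{assumption:Psialpha}, one obtains
\begin{equation*}
M_p(\mu)^p \le 2p \int_0^\infty t^{p-1} e^{-c_\mu t^{\alpha_\mu}}\,dt.
\end{equation*}
The substitution $u = c_\mu t^{\alpha_\mu}$ (so $t^{p-1}dt = \alpha_\mu^{-1} c_\mu^{-p/\alpha_\mu} u^{p/\alpha_\mu - 1}\,du$) transforms this integral into $\alpha_\mu^{-1} c_\mu^{-p/\alpha_\mu}\Gamma(p/\alpha_\mu)$, which is exactly \eqref{eq:IntGamma}.

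For the truncated bound \eqref{eq:IntGammaAlpha}, I would use the elementary identity
\begin{equation*}
\int_{B_r^c} |x|^p\,\mu(dx) = r^p\,\mu(B_r^c) + \int_r^\infty p t^{p-1}\,\mu(B_t^c)\,dt,
\end{equation*}
which follows from $|x|^p = r^p + \int_r^{|x|} p t^{p-1}dt$ on $\{|x|>r\}$ and Fubini. Applying the tail estimate to both $\mu(B_r^c)$ and $\mu(B_t^c)$ gives
\begin{equation*}
\int_{B_r^c} |x|^p\,\mu(dx) \le 2 r^p e^{-c_\mu r^{\alpha_\mu}} + 2p \int_r^\infty t^{p-1} e^{-c_\mu t^{\alpha_\mu}}\,dt,
\end{equation*}
and the same substitution $u=c_\mu t^{\alpha_\mu}$, now with lower limit $c_\mu r^{\alpha_\mu}$, yields the incomplete Gamma term in \eqref{eq:IntGammaAlpha}.

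There is no real obstacle here; the only mildly technical point is keeping track of the exponents in the change of variables so that the prefactor $\frac{2p}{\alpha_\mu} c_\mu^{-p/\alpha_\mu}$ emerges correctly. The proof of $\nu$-analogues is identical with the parameters $c_\nu,\alpha_\nu$ in place of $c_\mu,\alpha_\mu$.
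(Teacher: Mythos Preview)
Your proposal is correct and follows essentially the same approach as the paper: layer cake/Fubini, the tail bound from Assumption~\ref{assumption:Psialpha}, and a change of variables reducing to the (incomplete) Gamma function. The only cosmetic difference is that the paper writes the layer cake in the variable $t=|x|^p$ (so the integrand is $\mu(|x|\ge t^{1/p})$ and the substitution is $z=c_\mu t^{\alpha_\mu/p}$), whereas you keep $t=|x|$ and use $u=c_\mu t^{\alpha_\mu}$; these are equivalent.
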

\begin{proof}
By Fubini's theorem,
\begin{align*}
\int_{B_r^c} |x|^p\, d \mu(x) 
&= \int_0^\infty \mu(|x|\ge r, |x|\ge t^{1/p})\, dt\\
&= r^p \mu(B_{r}^c) +\int_{r^p}^\infty \mu(B_{t^{1/p}}^c\big)dt\\
&\overset{\eqref{eq:PsiAlapha}}{\leq} 2r^p \exp(-c_{\mu}r^{\alpha_{\mu}})  +\int_{r^p}^{\infty}2\exp\Big(-c_{\mu}t^{\frac{\alpha_{\mu}}{p}}\Big)\, d t\\
&\overset{\mathclap{z:=c_{\mu}t^{\frac{\alpha_{\mu}}{p}}}}{=} \quad 2r^p \exp(-c_{\mu}r^{\alpha_{\mu}})+ \int_{c_{\mu}r^{\alpha_{\mu}}}^{\infty} \frac{2p}{\alpha_{\mu}}c_{\mu}^{-\frac{p}{\alpha_{\mu}}}z^{\frac{p}{\alpha_{\mu}}-1}\exp(-z)\, d z\\
&=2r^p \exp(-c_{\mu}r^{\alpha_{\mu}})+\frac{2p}{\alpha_{\mu}}c_{\mu}^{-\frac{p}{\alpha_{\mu}}}\Gamma\Big(\frac{p}{\alpha_{\mu}},c_{\mu}r^{\alpha_{\mu}}\Big).
\end{align*}
In particular, if~$r=0,$
\[
\int|x|^{p}\, d\mu(x)\leq \frac{2p}{\alpha_{\mu}}c_{\mu}^{-\frac{p}{\alpha_{\mu}}}\Gamma\Big(\frac{p}{\alpha_{\mu}}\Big). \qedhere
\]
\end{proof}

\begin{lemma}\label{lem: binomial integrals}
Let~$n\geq 4$.  Let~$a\in (0,1)$ satisfy
\begin{equation}\label{eq:pbin}
    a\geq 1-\frac{2}{n^2}.
\end{equation}
Then there exists an absolute constant ~$C>0$ such that
\begin{equation}\label{eq:binOverN}
\begin{split}
&\sum_{j=1}^{n}j^{-\frac12}C_n^j a^j (1-a)^{n-j}
\leq \frac{C}{\sqrt{n}},\\
&\sum_{j=1}^{n}j^{-\frac14}C_n^j a^j (1-a)^{n-j}\leq \frac{C}{\sqrt[4]{n}},
\end{split}
\end{equation}
where~$C_n^j:= \binom{n}{j}$.
\end{lemma}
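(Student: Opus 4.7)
The plan is to interpret each sum as the expectation $\E[N^{-\alpha}\mathbbm{1}_{\{N\ge 1\}}]$ with $N\sim\mathrm{Bin}(n,a)$ and $\alpha\in\{1/2,1/4\}$, and then to split the index set at the midpoint $j=n/2$. The guiding intuition is that under the hypothesis \eqref{eq:pbin} the success probability $a$ is so close to $1$ that $N$ concentrates extremely tightly near $n$: on the bulk event $\{N\ge n/2\}$ the pointwise value $j^{-\alpha}\le (n/2)^{-\alpha}$ already delivers the desired decay, while the complementary tail $\{N<n/2\}$ has vanishing probability and thus contributes negligibly.

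For the bulk piece I would use $j^{-\alpha}\le (n/2)^{-\alpha}=2^{\alpha}n^{-\alpha}$ together with the trivial bound $\sum_{j=0}^{n}C_n^j a^j(1-a)^{n-j}=1$, yielding a contribution of at most $2^{\alpha}n^{-\alpha}$. For the tail piece $1\le j<n/2$ I would bound $j^{-\alpha}\le 1$ and reduce to estimating $\mathbb{P}(N<n/2)$. This is where \eqref{eq:pbin} enters: it forces $\E[n-N]=n(1-a)\le 2/n$, so Markov's inequality applied to the nonnegative variable $n-N$ yields
\[
\mathbb{P}(N<n/2)\;\le\;\mathbb{P}\bigl(n-N\ge n/2\bigr)\;\le\;\frac{n(1-a)}{n/2}\;=\;2(1-a)\;\le\;\frac{4}{n^{2}}.
\]

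Combining the two pieces, the first sum in \eqref{eq:binOverN} is bounded by $\sqrt{2}\,n^{-1/2}+4n^{-2}\le C\,n^{-1/2}$ and the second by $2^{1/4}n^{-1/4}+4n^{-2}\le C\,n^{-1/4}$ for all $n\ge 4$, with an absolute constant $C$. I do not anticipate any serious obstacle: the entire argument is an elementary concentration estimate, and the only minor care needed is in excluding the $j=0$ term (which is automatic since the sums start at $j=1$) and in choosing the split threshold. The value $n/2$ is convenient but not essential --- any fixed fraction of $n$ would work, since Markov at that scale still gives $O(n^{-2})$ decay, which easily dominates the $O(n^{-\alpha})$ targets for both $\alpha=1/2$ and $\alpha=1/4$.
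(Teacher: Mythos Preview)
Your proof is correct, but it takes a genuinely different route from the paper's. The paper argues via a ratio test: it computes
\[
\frac{j^{-\alpha}C_n^{j}a^{j}(1-a)^{n-j}}{(j+1)^{-\alpha}C_n^{j+1}a^{j+1}(1-a)^{n-j-1}}
=\Big(\frac{j+1}{j}\Big)^{\alpha}\cdot\frac{j+1}{n-j}\cdot\frac{1-a}{a}
\]
and shows that under the hypothesis $a\ge 1-2/n^{2}$ and $n\ge 4$ this ratio is bounded by a fixed constant strictly less than~$1$ (namely $2\sqrt{2}/3$ for $\alpha=1/2$ and $2\cdot 2^{1/4}/3$ for $\alpha=1/4$). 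Hence the terms increase geometrically with~$j$, the largest being $n^{-\alpha}a^{n}$ at $j=n$, and the whole sum is dominated by a geometric series of that size. Your approach instead splits at $j=n/2$, bounds the bulk pointwise by $(n/2)^{-\alpha}$, and dispatches the tail $\{j<n/2\}$ with Markov's inequality applied to $n-N$, using $\E[n-N]=n(1-a)\le 2/n$.

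Both arguments are elementary and short. Your concentration argument is arguably cleaner and handles all exponents $\alpha>0$ at once with no change; the paper's ratio argument yields a slightly tighter explicit constant but must be rerun for each~$\alpha$.
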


\begin{proof}
Note that for~$1\leq j\leq n-1$ we have
\begin{equation*}
\begin{split}
    &\frac{j^{-\frac12}C_n^{j} a^{j} (1-a)^{n-j}}{(j+1)^{-\frac12}C_n^{j+1} a^{j+1} (1-a)^{n-j-1}}
    =\frac{\sqrt{j+1}}{\sqrt{j}}\frac{j+1}{n-j}\frac{1-a}{a}.
\end{split}
\end{equation*}
Next, if~$n\geq 4$,~$1\leq j\leq n-1$ and~$a\geq 1-\frac{2}{n^2}$,
\[
\frac{\sqrt{j+1}}{\sqrt{j}}\frac{j+1}{n-j}\frac{1-a}{a}\leq \sqrt{2}n \frac{1-a}{a}\leq  \frac{\sqrt{2}n}{\frac{n^2}{2}-1}=\frac{\sqrt{2}}{\frac{n}{2}-\frac{1}{n}}<\frac{2\sqrt{2}}{3}<1,
\]
which gives that
\begin{equation*}
\begin{split}
&\sum_{j=1}^{n}j^{-\frac12}C_n^j a^j (1-a)^{n-j}\leq \frac{1}{\sqrt{n}}a^n \sum_{j=1}^{n}\Big(\frac{2\sqrt{2}}{3}\Big)^{n-j}
\leq  \frac{1}{\sqrt{n}}\frac{1}{1-\frac{2\sqrt{2}}{3}}=\frac{C}{\sqrt{n}}.
\end{split}
\end{equation*}
Similarly,
\begin{equation*}
\sum_{j=1}^{n}j^{-\frac14}C_n^j a^j (1-a)^{n-j}\leq \frac{1}{n^{\frac{1}{4}}}a^n \sum_{j=1}^{n}\Big(\frac{2\cdot \sqrt[4]{2}}{3}\Big)^{n-j}
\leq  \frac{1}{n^{\frac{1}{4}}}\frac{1}{1-\frac{2\cdot \sqrt[4]{2}}{3}}=\frac{C}{\sqrt[4]{n}}.
\end{equation*}
\end{proof}

The following lemma is an adapted version of \cite[Lemma C.1]{gonzalez2025sparse}, see also Lemma \ref{lem:C1GEN25}.

\begin{lemma}[Lemma C.1 in \cite{gonzalez2025sparse}, pointwise control of dual potentials]\label{lem:LinftyBoundSchrondinger} We have
   \begin{equation}\label{eq:potentialLInfityG}
\big\|g^{r,s}\big\|_{L^{\infty}(\mu^{B_r})}, \big\|g^{r,s}_n\big\|_{L^{\infty}(\mu^{B_r})}\leq  2C_p(r+s)^p
\end{equation}
and 
\begin{equation}\label{eq:potentialLInfityF}
\big\|f^{r,s}\big\|_{L^{\infty}(\mu^{B_r})}, \big\|f^{r,s}_n\big\|_{L^{\infty}(\mu^{B_r})}\leq t_0+7C_p(r+s)^p. 
\end{equation}
\end{lemma}
\begin{proof}
For a function~$h$ define
\begin{equation}
    \norm{h}_{\osc}:= \sup h - \inf h.
\end{equation}
According to Lemma \ref{lem:C1GEN25}, 
\begin{align}\label{eq:osc}
\begin{split}
\big\|f^{r,s}\big\|_{\osc}, \big\|g^{r,s}\big\|_{\osc}, \big\|f^{r,s}_n\big\|_{\osc}, \big\|g^{r,s}_n\big\|_{\osc}\leq 2\norm{c}_{L^{\infty}(\mu^{B_r}\otimes \nu^{B_s})}\overset{\text{Lem. }~\ref{lem: pointwise bound of cost function}}{\le} 2C_p(r+s)^p.
\end{split}
\end{align}
Since we have~$\int g^{r,s}\, d\nu^{B_r}=0$ and~$\int g^{r,s}_n d\nu_n^{B_r}=0$, we conclude by the intermediate value theorem,
\begin{equation*}
 \min g^{r,s} \leq 0 \leq \max g^{r,s},\quad \min g^{r,s}_n  \leq 0 \leq \max g^{r,s}_n,
\end{equation*}
which implies that
\begin{equation}
    \big\|g^{r,s}\big\|_{L^{\infty}(\nu^{B_s})}\leq \big\|g^{r,s}\big\|_{\osc}\leq 2C_p(r+s)^p,\quad \big\|g_n^{r,s}\big\|_{L^{\infty}(\nu_n^{B_s})}\leq \big\|g_n^{r,s}\big\|_{\osc}\leq 2C_p(r+s)^p.
\end{equation}
On the other hand, according to Lemma~\ref{lem:C1GEN25} below, %
\begin{align*}
\norm{f^{r,s}+ g^{r,s}}_{L^\infty(\mu^{B_r}\otimes \nu^{B_s})}&\le \inf  (f^{r,s}+g^{r,s}) + \big\|f^{r,s}\big\|_{\osc}+ \big\|g^{r,s}\big\|_{\osc} \\
&\leq t_0 + 5\norm{c}_{L^{\infty}(\mu^{B_r}\otimes \nu^{B_s})}.
\end{align*}
 We then have
 \begin{align*}
\norm{f^{r,s}}_{L^{\infty}(\mu^{B_r})}&\leq \norm{f^{r,s}+ g^{r,s}}_{L^\infty(\mu^{B_r}\otimes \nu^{B_s})}+\big\|g^{r,s}\big\|_{L^{\infty}(\nu^{B_s})}\\
&\le  t_0 + 7\norm{c}_{L^{\infty}(\mu^{B_r}\otimes \nu^{B_s})}\leq t_0+7C_p(r+s)^p.
 \end{align*}
 An analogous statement holds for~$f_n^{r,s}$.
\end{proof}

\begin{lemma}[Variance of dual potentials] \label{lem:varfg}
We have
\begin{align}\label{e:varfg}
     \Var_{\mu^{B_r}}(f^{r,s})&\leq (C_p(r+s)^p)^2\\
     \Var_{\nu^{B_s}}(g^{r,s})&\leq (C_p(r+s)^p)^2.
\end{align}
\end{lemma}
\begin{proof}
We only bound~$\Var_{\mu^{B_r}}(f^{r,s})$; the estimate for~$\Var_{\nu^{B_s}}(g^{r,s})$ follows analogously. For this we note that
\begin{equation}\label{e:varosc}
   \Var_{\mu^{B_r}}(f^{r,s})= \Var_{\mu^{B_r}}(f^{r,s}-c_f)\leq \norm{f^{r,s}-c_f}^2_{L^{\infty}}=\frac{1}{4}\norm{f^{r,s}}^2_{\osc} \overset{\eqref{eq:osc}}{\le } (C_p(r+s)^p)^2.
\end{equation}
where~$c_f:=\frac12(\sup f^{r,s}-\inf f^{r,s})$.
This completes the proof. 
\end{proof}

The following lemma is  \cite[Proposition 18]{stromme2023minimum}.
\begin{lemma}[Proposition 18 in \cite{stromme2023minimum}]\label{lem: average inverse mass}
    Suppose~$\rho\in \mathcal{P}(\R^d)$ has compact support. Then
    \[
    \int \rho(B_\delta(z))^{-1}\,  \rho(dz) \leq \mathcal{N}\big(\mathrm{spt}(\rho),\frac{\delta}{4}\big).
    \]
\end{lemma}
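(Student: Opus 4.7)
The plan is to exploit the covering by balls of radius $\delta/4$ to partition $\mathrm{spt}(\rho)$ into finitely many pieces on which the integrand $\rho(B_\delta(z))^{-1}$ can be controlled by a single, $z$-independent quantity. Specifically, let $N:=\mathcal N(\mathrm{spt}(\rho),\delta/4)$ and fix a covering $\mathrm{spt}(\rho)\subseteq\bigcup_{i=1}^N B_{\delta/4}(z_i)$. Define the disjoint Borel sets
\[
A_i := \bigl(B_{\delta/4}(z_i)\cap\mathrm{spt}(\rho)\bigr)\setminus\bigcup_{j<i}B_{\delta/4}(z_j),\qquad i=1,\dots,N,
\]
so that $(A_i)_i$ partitions $\mathrm{spt}(\rho)$ and each $A_i\subseteq B_{\delta/4}(z_i)$.

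The key geometric observation, obtained by the triangle inequality, is that whenever $z\in A_i$ and $w\in B_{\delta/4}(z_i)$ one has $|w-z|\le |w-z_i|+|z_i-z|<\delta/4+\delta/4<\delta$, hence $B_{\delta/4}(z_i)\subseteq B_\delta(z)$. In particular, for any $z\in A_i$,
\[
\rho\bigl(B_\delta(z)\bigr)^{-1}\le \rho\bigl(B_{\delta/4}(z_i)\bigr)^{-1},
\]
and the right-hand side is finite because $z\in\mathrm{spt}(\rho)\cap B_{\delta/4}(z_i)$ forces the open ball $B_{\delta/4}(z_i)$ to have strictly positive $\rho$-mass.

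Putting these together and using $\rho(A_i)\le \rho(B_{\delta/4}(z_i))$, I would conclude
\[
\int\rho\bigl(B_\delta(z)\bigr)^{-1}\,\rho(dz)
= \sum_{i=1}^N\int_{A_i}\rho\bigl(B_\delta(z)\bigr)^{-1}\,\rho(dz)
\le \sum_{i=1}^N \frac{\rho(A_i)}{\rho(B_{\delta/4}(z_i))}
\le N,
\]
which is precisely the desired bound.

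There is no real obstacle here; the only mild subtlety worth stating explicitly is that the centers $z_i$ provided by the covering need not lie in $\mathrm{spt}(\rho)$, which is why the partition is built from $B_{\delta/4}(z_i)\cap\mathrm{spt}(\rho)$ rather than from Voronoi cells around the $z_i$'s — this guarantees that the denominators $\rho(B_{\delta/4}(z_i))$ appearing in the bound are indeed strictly positive whenever $A_i\neq\emptyset$.
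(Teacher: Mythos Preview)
Your proof is correct. The paper itself does not give a proof of this lemma; it simply cites \cite[Proposition~18]{stromme2023minimum} and moves on, so there is no in-paper argument to compare against. Your argument is the standard one (and indeed essentially the one in Stromme's paper): partition the support according to a minimal $\delta/4$-cover, use the triangle inequality to show $B_{\delta/4}(z_i)\subseteq B_\delta(z)$ for every $z$ in the $i$th piece, and then bound each summand by $1$. Your remark that $\rho(B_{\delta/4}(z_i))>0$ whenever $A_i\neq\emptyset$ --- because $A_i$ then contains a support point of $\rho$ and $B_{\delta/4}(z_i)$ is an open neighborhood of it --- is exactly the right way to handle the fact that the covering centers need not lie in $\mathrm{spt}(\rho)$. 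One could even note that your computation gives $|w-z|<\delta/2$, so the result holds with $\mathcal N(\mathrm{spt}(\rho),\delta/2)$ in place of $\mathcal N(\mathrm{spt}(\rho),\delta/4)$; the factor $4$ in the statement is not sharp.
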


We also need the following elementary result.
\begin{lemma}\label{lem: calcfact}
For every~$a>0$ and~$x\geq a^2$ we have 
\begin{equation}\label{eq:xgeqalogx}
    a\log x\le x.
\end{equation}
\end{lemma}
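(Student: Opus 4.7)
The plan is to verify the inequality $a\log x \le x$ on $\{x \ge a^2\}$ by splitting into two subcases according to the sign of $\log x$. If $x \le 1$, then $\log x \le 0$ while $a > 0$ and $x > 0$, so $a\log x \le 0 < x$ holds trivially; note that $x > 0$ is automatic from $x \ge a^2$ and $a > 0$.

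In the remaining case $x > 1$, I would combine the hypothesis $x \ge a^2$, rewritten as $a \le \sqrt{x}$, with the elementary pointwise bound $\log x \le \sqrt{x}$ valid on $[1,\infty)$. Since both factors $a$ and $\log x$ are nonnegative in this regime, multiplying yields
\begin{equation*}
a \log x \le \sqrt{x} \cdot \sqrt{x} = x,
\end{equation*}
as desired.

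The only ingredient that requires any verification is the bound $\log x \le \sqrt{x}$ on $[1,\infty)$, which I would establish by studying $\phi(x) := \sqrt{x} - \log x$. The derivative $\phi'(x) = (2\sqrt{x})^{-1} - x^{-1}$ vanishes exactly at $x=4$, is negative on $(1,4)$ and positive on $(4,\infty)$, so $\phi$ attains its minimum on $[1,\infty)$ at $x=4$ with value $\phi(4) = 2 - 2\log 2 > 0$; combined with $\phi(1) = 1$, this yields $\phi \ge 0$ on $[1,\infty)$. No step in the argument poses a real obstacle; the only mild subtlety is the case split, which is needed because the natural multiplicative argument requires both factors $a$ and $\log x$ to be nonnegative to be compared with $\sqrt{x}$.
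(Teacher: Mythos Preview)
Your proof is correct and takes a genuinely different route from the paper. The paper splits on $a$: for $a \le e$ it observes that $x - a\log x$ is minimized at $x=a$ with value $a(1-\log a) \ge 0$ (giving the inequality for all $x>0$, not only $x\ge a^2$), while for $a>e$ it uses monotonicity of $x - a\log x$ on $[a,\infty)$ and evaluates at the endpoint $x=a^2$, invoking the auxiliary fact $a - 2\log a > 0$. You instead split on $x$ and, in the nontrivial regime $x>1$, exploit the hypothesis in the clean multiplicative form $a \le \sqrt{x}$ together with the single universal bound $\log x \le \sqrt{x}$. Your argument is slightly more streamlined: it uses the constraint $x\ge a^2$ in exactly the right shape, and the one auxiliary inequality you need is essentially the same as the paper's unproved fact $a - 2\log a > 0$ (set $x=a^2$), which you actually verify. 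The paper's approach, on the other hand, gives a little extra information in Case~I (validity for all $x>0$ when $a\le e$), though this is not used anywhere.
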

\begin{proof}[Proof of Lemma~\ref{lem: calcfact}]
We distinguish the two cases~$a\in (0,e]$ and~$a>e$.   

\restartcases

\case[$a\leq e$] Observe that
\begin{equation}\label{eq:easy}
\frac{\partial}{\partial x}(x-a\log x)=1-\frac{a}{x},
\end{equation}
which implies that for~$x>0$, the function~$x \mapsto x-a\log x$ attains its minimum value when~$x=a$. The conclusion~\eqref{eq:xgeqalogx} follows from the fact that~$\log a\leq 1$.

\case[$a>e$]
As~$x\geq a^2\geq a$, we conclude from~\eqref{eq:easy} that the function~$x \mapsto x-a\log x$ is monotonically increasing. Thus
\begin{equation}
    x-a\log x\geq a^2 -2 a\log a = a(a-2\log a)>0,
\end{equation}
where the last inequality uses the fact that for any~$a>0$,~$a-2\log a>0$.
\end{proof}

\begin{lemma}\label{l:estGamma}
Let~$s>0$. If~$x\geq 4s^2\vee e$, then
 \begin{equation}
      \Gamma\big(s,(s \vee 1) \log(x)\big)\leq \frac{1}{x}.
 \end{equation}
\end{lemma}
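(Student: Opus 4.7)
The plan is to split based on whether $s\le 1$ or $s>1$. The first case is essentially immediate; the second requires a reduction to an elementary inequality that is closed using Lemma \ref{lem: calcfact}.

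For $s\le 1$, we have $(s\vee 1)\log x=\log x$, and $x\ge e$ gives $\log x\ge 1$. Since $s-1\le 0$, we have $t^{s-1}\le 1$ for all $t\ge 1$, so
\[
\Gamma(s,\log x)\;\le\;\int_{\log x}^{\infty}e^{-t}\,dt\;=\;\frac{1}{x}.
\]

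For $s>1$, write $L=\log x$ and set $y=sL$. I would first derive the classical tail bound
\[
\Gamma(s,y)\;\le\;\frac{y^{s}e^{-y}}{y-s+1}\qquad(y>s-1)
\]
from integration by parts, $\Gamma(s,y)=y^{s-1}e^{-y}+(s-1)\Gamma(s-1,y)$, combined with $\Gamma(s-1,y)\le \Gamma(s,y)/y$ (which follows from $1/t\le 1/y$ for $t\ge y$). Since $y=sL\ge s>s-1$, this applies and reduces the target $\Gamma(s,sL)\le 1/x$ to $(sL)^{s}\le (sL-s+1)x^{s-1}$. Dividing by $(sL)^{s-1}$, taking logarithms, and applying $\log(1+u)\le u$ to the left-hand side, it then suffices to prove
\[
\frac{1}{sL-s+1}\;\le\;\log\!\frac{x}{sL}. \qquad(\ast)
\]

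The right-hand side of $(\ast)$ is non-decreasing in $x$ (as $\partial_{x}\log(x/(sL))=(1-1/L)/x\ge 0$ whenever $L\ge 1$), while the left-hand side is strictly decreasing in $x$, so it suffices to verify $(\ast)$ at the boundary $x=4s^{2}$. Substituting $m=2s\ge 2$, one computes $L=2\log m$, $sL=m\log m$, $x/(sL)=m/\log m$, and $(\ast)$ becomes
\[
\log\!\frac{m}{\log m}\;\ge\;\frac{1}{m\log m-m/2+1}.
\]
The left side is at least $1$ because $m/\log m\ge e$ for every $m>0$ --- this is the case $a=e$ of Lemma \ref{lem: calcfact}, as the convex function $m\mapsto m-e\log m$ attains its minimum $0$ at $m=e$. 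The right side is at most $1$ because $m\ge 2$ gives $\log m\ge 1/2$, hence $m\log m-m/2+1\ge 1$. The main obstacle is that the reduced inequality $(\ast)$ is tight at the corner $s=1$, $x=4$, so the chain of estimates leading to $(\ast)$ has no room to spare; once the substitution $m=2s$ is made, however, Lemma \ref{lem: calcfact} (applied with $a=e$) closes the argument without any further case analysis.
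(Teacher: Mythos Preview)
Your proof is correct, and the case $s\le 1$ matches the paper verbatim. For $s>1$ you take a genuinely different route. The paper invokes the cited bound $\Gamma(s,y)\le s\,y^{s-1}e^{-y}$ for $y>s$ (Gabcke), substitutes $y=s\log x$, and finishes in one line by using the elementary fact that $\log(x)/x\le 1/(se)$ on the range $x\ge 4s^2\vee e$; this gives $s^s(\log x/x)^{s-1}\le s/e^{s-1}\le 1$ directly. You instead derive your own tail estimate $\Gamma(s,y)\le y^{s}e^{-y}/(y-s+1)$ by integration by parts, reduce to the inequality $(\ast)$, and verify it at the endpoint $x=4s^2$ via the substitution $m=2s$. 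Your argument is fully self-contained (no literature reference), which is a genuine gain; the price is a longer chain of reductions than the paper's two-line computation.

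Two minor remarks. First, your appeal to Lemma~\ref{lem: calcfact} with $a=e$ does not literally apply, since that lemma only asserts $e\log m\le m$ for $m\ge e^2$, whereas you need $m\ge 2$; however, the convexity argument you give alongside is correct and self-contained, so there is no gap. Second, the comment that ``$(\ast)$ is tight at the corner $s=1$, $x=4$'' is not quite accurate: it is the \emph{original} inequality $\Gamma(s,s\log x)\le 1/x$ that becomes an equality there, while $(\ast)$ itself retains slack (the limit $s\to 1^+$ makes both sides of the pre-division inequality vanish). This is only a commentary issue and does not affect the argument.
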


\begin{proof}[Proof of Lemma~\ref{l:estGamma}]
We distinguish the two cases~$s<1$ and~$s\geq 1$.
\restartcases
\case[$s<1$]
Notice that~$x\geq e$ and thus~$s~$ by direct computation
\begin{equation}
     \Gamma(s,\log(x))=\int_{\log(x)}^{\infty}t^{s-1}e^{-t}\,d t\leq \int_{\log(x)}^{\infty}e^{-t}\,d t=\frac{1}{x}.
\end{equation}

\case[$s\geq 1$]
Firstly, we recall the fact that when~$x> 0$,
\begin{equation}
    \frac{\partial}{\partial x} \frac{\log x}{x} =\frac{1}{x^2}(1-\log x).
\end{equation}
This implies that the function~$\log x/x$ is non-increasing when~$x\geq e$ and non-decreasing when~$x<e$. As a result,
\begin{equation}\label{eq: max logx/x}
    \max_{x>0}\frac{\log x}{x}\leq \frac{\log e}{e}=\frac{1}{e}.
\end{equation}
Therefore, when~$x\geq 4s ^2\vee e$,
\begin{equation}\label{eq:logxoverx}
    \frac{\log(x)}{x}\leq \frac{2\log (2s)}{4s^2}=\frac{1}{s}\frac{\log (2s)}{2s} \overset{\eqref{eq: max logx/x}}{\leq} \frac{1}{s}\frac{1}{e}.
\end{equation}
According to \cite[Satz 4.4.3]{Gabcke1979},  for~$y>s$ and~$s\geq 1$,
\begin{equation*}
    \Gamma(s,y)\leq s e^{-y}y^{s-1}.
\end{equation*}
We plug in~$y=s\log (x)$
and obtain
\begin{align*}
\Gamma\big(s,s\log(x)\big) \leq s\frac{1}{x^{s}} \big(s\log(x)\big)^{s-1}
&\leq  s^{s}\Big(\frac{\log(x)}{x}\Big)^{s-1} \frac{1}{x}\\
&\overset{\eqref{eq:logxoverx}}{\leq} s^{s}\Big(\frac{1}{s}\frac{1}{e}\Big)^{s-1} \frac{1}{x}=\frac{s}{e^{s-1}}\frac{1}{x} 
\le \frac{1}{x},
\end{align*}
where the last inequality uses the fact that~$\frac{s}{e^{s-1}}\le 1$ for all~$s\in \R$.
\end{proof}

\section{Proof of Lemma~\ref{lem:duality}}

The proof is analogous to that of \cite[Proposition 2.3]{gonzalez2025sparse}.

\begin{lemma}[Lemma C.1 in \cite{gonzalez2025sparse}]\label{lem:C1GEN25}
    Let~$P, Q$ be probability measures on~$\R^d$ with supports~$\Omega,\Omega'$. Let~$c\in\mathcal{C}(\Omega\times\Omega')$ be bounded and have modulus of continuity~$\rho$. Given any bounded measurable function~$g:\Omega'\to\R$, there exists a unique function~$f:\Omega\to\R$ such that
    \begin{equation}
  \label{eq:fmargin}        \int \psi'\paren[\Big]{f(x)+g(y)-c(x,y)}\, d Q(y)=1\quad \text{for all}~x\in\Omega.
    \end{equation}
    Moreover,~$f$ is uniformly continuous with modulus~$\rho$ and its oscillation is bounded as
    \begin{equation}
    \sup_{x\in\Omega}f(x)-\inf_{x\in\Omega}f(x)\leq \norm{c}_{\infty},
    \end{equation}
    while~$\inf_{x,y}\{f(x)+g(y)\}\leq t_0+\norm{c}_{\infty}$ and~$\sup_{x,y}\{f(x)+g(y)\}\geq t_0-\norm{c}_{\infty}$, where~$t_0$ is defined in Assumption~\ref{assum:psi}. Finally,~$f$ solves the concave optimization problem
    \begin{equation}\label{eq:concaveopt}
        \sup_{f\in L^{\infty}(P)}\int \paren[\big]{f\oplus g -\psi\paren{f\oplus g-c}}\, d (P\otimes Q).
    \end{equation}
\end{lemma}
\begin{proof}
As~$g$ and~$c$ are bounded, for any fixed~$x$,~$\lim_{s\to\infty}\psi'(s+g(y)-c(x,y))=\infty$ and~$\lim_{s\to -\infty}\psi'(s+g(y)-c(x,y))<1$ by the properties of~$\psi$. As~$\psi'$ is continuous, according to the intermediate value theorem, there exists a value~$f(x)$ such that~\eqref{eq:fmargin} holds. Let~$x,\tilde{x}\in\Omega$ and assume without loss of generality that~$f(\tilde x)\leq f(x)$. As~$\psi'$ is nondecreasing,~\eqref{eq:fmargin} yields,
\begin{align}
 \int \psi'\paren[\Big]{f(x)+g(y)-c(x,y)}\, d Q(y)&=1= \int \psi'\paren[\Big]{f(\tilde{x})+g(y)-c(\tilde{x},y)}\, d Q(y)\\
 &\leq \int \psi'\paren[\Big]{f(\tilde{x})+g(y)-c(x,y)+\rho(\norm{x-\tilde x})}\, d Q(y).
\end{align}
Since~$\psi'$ is strictly increasing on~$[t_0-\delta,\infty)$, this implies that~$f(x)\leq f(\tilde{x})+\rho(\norm{x-\tilde{x}})$. Thus,~$f$ has modulus of continuity~$\rho$. Applying the same argument with~$x=\tilde{x}$ shows that~$f(x)$ is uniquely determined by~\eqref{eq:fmargin} and also that the oscillation of~$f$ is bounded by that of~$c$: 
\begin{equation}
    \sup_{x}f(x)-\inf_{x}f(x)\leq \sup_{x,y}c(x,y)-\inf_{x,y}c(x,y)\leq \norm{c}_{\infty}.
\end{equation}

Since~$\psi'(t_0)=1$ by Assumption~\ref{assum:psi}, the equation~\eqref{eq:fmargin}
implies that
\begin{equation}
    \inf_{x,y}\{f(x)+g(y)-c(x,y)\}\leq t_0\leq \sup_{x,y}\{f(x)+g(y)-c(x,y)\}.
\end{equation}
Thus~$ \inf_{x,y}\{f(x)+g(y)\}\leq t_0+\norm{c}_{\infty}$ and~$\sup_{x,y}\{f(x)+g(y)\}\geq t_0-\norm{c}_{\infty}$. Finally, since~$f$ satisfies~\eqref{eq:fmargin} which is the first order condition for optimality, we have that~$f$ solves the concave optimization~\eqref{eq:concaveopt}. 
\end{proof}

\begin{proof}[Proof of Lemma~\ref{lem:duality}]
Notice that the statement of Lemma~\ref{lem:duality} is included in \cite[Proposition 2.3, i-vi]{gonzalez2025sparse}. The proof of Lemma~\ref{lem:duality} is identical to the proof of \cite[Proposition 2.3, i-vi]{gonzalez2025sparse}, where we replace the use of \cite[Lemma C.1]{gonzalez2025sparse} by Lemma~\ref{lem:C1GEN25} above.
\end{proof}

\section{Proof of Corollary~\ref{thm:nucompact}}\label{sec:ruiyu_compact}
In this section we prove Theorem~\ref{thm:nucompact}. To simplify notation, we always assume throughout this section, that~$\mu$ satisfies Assumption~\ref{assumption:Psialpha} and that~$\nu$ is compactly supported. We begin by stating two preparing lemmas The first one is an analogue of Lemma~\ref{lem: nx>0, or ny>0}.
\begin{lemma}\label{lem:nr=0cpt} For any~$r>0$ we have
   \begin{align*}
    \MoveEqLeft\E \big[|\mathcal{C}_{\veps}(\mu_n,\nu_n)-\mathcal{C}_{\veps}(\mu,\nu)|\,\big| n_r=0\big]\\
&\leq C \big[ 1+M_p(\mu)^p+M_p(\nu)^p+ M_p(\mu_n^{B_r^c})^p + M_p(\nu_n)^p \big].
\end{align*}
\end{lemma}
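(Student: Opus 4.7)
The plan is to adapt the argument of Lemma \ref{lem: nx=ny=0} to the semi-compact setting, where the compact support of $\nu$ eliminates the need to condition on any binomial event involving the $Y_i$'s. The proof should proceed in three short steps.

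First, I would bound the EOT costs themselves by plugging the independent coupling $\pi = \mu \otimes \nu$ into the definition \eqref{eq: EOT} and invoking Lemma \ref{lem: pointwise bound of cost function} together with the elementary inequality $|x-y|^p \leq 2^{p-1}(|x|^p + |y|^p)$. This yields
\begin{equation*}
\mathcal{C}_\veps(\mu,\nu) \leq \int c(x,y)\,\mu(dx)\nu(dy) \leq 2^{p-1} C_p \bigl(M_p(\mu)^p + M_p(\nu)^p\bigr),
\end{equation*}
and an analogous bound for $\mathcal{C}_\veps(\mu_n,\nu_n)$ with $M_p(\mu_n)^p$ and $M_p(\nu_n)^p$ in place of $M_p(\mu)^p$ and $M_p(\nu)^p$.

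Second, I would observe that on the event $\{n_r = 0\}$ all samples $X_1,\dots,X_n$ lie in $B_r^c$, so $\mu_n = \mu_n^{B_r^c}$ almost surely on this event. Substituting this identity into the empirical bound from Step~1 and applying the triangle inequality to $|\mathcal{C}_\veps(\mu_n,\nu_n) - \mathcal{C}_\veps(\mu,\nu)|$ produces the pointwise estimate
\begin{equation*}
|\mathcal{C}_\veps(\mu_n,\nu_n) - \mathcal{C}_\veps(\mu,\nu)| \leq C\bigl[\,1 + M_p(\mu)^p + M_p(\nu)^p + M_p(\mu_n^{B_r^c})^p + M_p(\nu_n)^p\bigr]
\end{equation*}
on $\{n_r = 0\}$, where the $1$ is carried along merely for uniformity with the other cases in the main proof.

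Third, taking conditional expectation given $\{n_r = 0\}$ on both sides delivers the claim; by Remark \ref{rem:binomial}, conditionally on $\{n_r = 0\}$ the empirical measure of the $X_i$'s is drawn i.i.d.~from $\mu^{B_r^c}$, while the $Y_i$'s are unconstrained, which is why the expected moments on the right-hand side retain their empirical form or can, if desired, be replaced by their population counterparts $M_p(\mu^{B_r^c})^p$ and $M_p(\nu)^p$. There is no real obstacle here: the argument is a direct simplification of Lemma \ref{lem: nx=ny=0}, because the compactness of $\mathrm{spt}(\nu)$ removes the second binomial event and obviates any bound of the form $M_p(\nu^{B_s^c})^p$.
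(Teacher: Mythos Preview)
Your proposal is correct and follows essentially the same route as the paper: bound both $\mathcal{C}_\veps(\mu,\nu)$ and $\mathcal{C}_\veps(\mu_n,\nu_n)$ via the product coupling and Lemma~\ref{lem: pointwise bound of cost function}, use $\mu_n=\mu_n^{B_r^c}$ on $\{n_r=0\}$, apply the triangle inequality, and take conditional expectations. Your closing remark that the empirical moments on the right-hand side can be replaced by their population counterparts $M_p(\mu^{B_r^c})^p$ and $M_p(\nu)^p$ is exactly how the bound is used downstream.
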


\begin{proof}
Recall~\eqref{eq:Cmunu} and~\eqref{eq:Cmunuhat}. Since  we have~$\mu_n=\mu_n^{B_r^c}$ on the event~$\{n_r=0\}$, it follows from the triangle inequality that
\begin{equation}\label{eq:CmuNx0cpt}
|\mathcal{C}_{\veps}(\mu,\nu)-\mathcal{C}_{\veps}(\mu_n,\nu_n)|
\leq  C \big[ 1+M_p(\mu)^p+M_p(\nu)^p+ M_p(\mu_n^{B_r^c})^p + M_p(\nu_n)^p \big].
\end{equation}
Taking conditional expectations on both sides of~\eqref{eq:CmuNx0cpt} %
finishes the proof.
\end{proof}

\noindent The second lemma follows directly from Lemma~\ref{lem: expectation of empirical approx}.

\begin{lemma}\label{lem: expectation of empirical approx cpt}
For any~$r>0$ we have
\begin{align}
\MoveEqLeft\sum_{i=1}^{n}\E \big[ |\mathcal{C}_{\veps}(\mu_n^{B_r},\nu_n)-\mathcal{C}_{\veps}(\mu_n,\nu_n)|\,\big| n_r=i \big]\cdot \mathbb{P}(n_r=i)\\
&\leq C\Big(M_p(\mu)^p+M_p(\nu)^p\Big)^{\frac{p-1}{p}}\cdot\bigg[ \Big(M_p(\mu)^p+ M_p(\mu^{B_r^c})^p \Big)\mu(B_r^c)\bigg]^{\frac1p}.
\label{eq:Enr>0cpt}
\end{align}
\end{lemma}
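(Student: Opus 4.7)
The plan is to mirror the proof of Lemma \ref{lem: expectation of empirical approx} with the $\nu$-truncation rendered trivial. Because $\mathrm{spt}(\nu)\subseteq B_{r^\nu}$ is compact, fix $s:=r^\nu$ once and for all. Then each sample $Y_i$ lies in $B_s$ almost surely, so $n_s=n$ and consequently $\nu_n^{B_s}=\nu_n$ and $\nu^{B_s}=\nu$ with probability one. In particular $\nu(B_s^c)=0$ and the $\nu$-contribution disappears from every estimate.

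First, I would record a one-sided analogue of Lemma \ref{lem: estimate of second term-after expectation}: following its derivation verbatim but replacing the coupling on the $\nu$-side by the identity coupling (so that $W_p(\nu_n,\nu_n)=0$ contributes nothing), I obtain
\begin{equation*}
\E\big[|\mathcal{C}_{\veps}(\mu_n,\nu_n)-\mathcal{C}_{\veps}(\mu_n^{B_r},\nu_n)|\,\big|\, n_r\big]
\leq C\bigl(M_p(\mu)^p+M_p(\nu)^p\bigr)^{\frac{p-1}{p}}\bigl[\bigl(M_p(\mu)^p+M_p(\mu^{B_r^c})^p\bigr)\bigl(1-\tfrac{n_r}{n}\bigr)\bigr]^{\frac1p}.
\end{equation*}
Here the pre-factor still carries a $M_p(\nu)^p$ term because it comes from the Lipschitz constant $L$ in Lemma \ref{lem: AL}, which depends on the moments of all four marginals $\mu_n,\nu_n,\mu_n^{B_r},\nu_n$; taking conditional expectations and using \eqref{eq: restrict second moment less than original} together with $\E[\int|y|^p\,\nu_n(dy)]=M_p(\nu)^p$ produces exactly the factor $(M_p(\mu)^p+M_p(\nu)^p)^{(p-1)/p}$.

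Second, I would sum over $i=1,\dots,n$ against $\mathbb P(n_r=i)$. Jensen's inequality (exactly as in \eqref{eq: Jensen}) together with the binomial identity
\begin{equation*}
\sum_{i=1}^{n}\Bigl(1-\tfrac{i}{n}\Bigr)\mathbb{P}(n_r=i)\le \E\Bigl[1-\tfrac{n_r}{n}\Bigr]=\mu(B_r^c),
\end{equation*}
absorbs the $(1-n_r/n)$ factor into $\mu(B_r^c)$, and yields the claimed bound \eqref{eq:Enr>0cpt}.

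No step presents a genuine obstacle: the argument is a one-sided simplification of the two-sided estimate carried out in Lemma \ref{lem: expectation of empirical approx}, and the only conceptual point is to verify that the pre-factor retains the $M_p(\nu)^p$ summand via the same expectation computations (Remark \ref{rem:binomial} and \eqref{eq: restrict second moment less than original}) that were used there.
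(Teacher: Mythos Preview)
Your proposal is correct and follows essentially the same approach as the paper: establish the one-sided conditional bound by rerunning the proof of Lemma~\ref{lem: estimate of second term-after expectation} with the $\nu$-truncation trivialised, then sum against $\mathbb{P}(n_r=i)$ using Jensen's inequality and $\E[1-n_r/n]=\mu(B_r^c)$. The paper organises this as two labelled steps (first reducing to the conditional claim, then proving it via the $A_1$, $A_2$ decomposition), but the content is identical to what you outline.
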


\begin{proof}

\restartsteps

\step
We first claim that
if~$n_r>0$, then 
\begin{align}
\E\big[|\mathcal{C}_{\veps}(\mu_n,\nu_n)-\mathcal{C}_{\veps}(\mu_n^{B_r},\nu_n)|\,\big| n_r\big]
&\leq C\Big(M_p(\mu)^p+M_p(\nu)^p\Big)^{\frac{p-1}{p}}\\
& \qquad\cdot\bigg[M_p(\mu)^p+ M_p\big(\mu^{B_r^c}\big)^p\Big)\cdot\Big(1-\frac{n_r}{n}\Big)\bigg]^{\frac1p}. \label{eq:lem62claim1st}
\end{align}
    We now proceed to prove~\eqref{eq:Enr>0cpt} assuming the above claim, which immediately gives that
\begin{equation}\label{eq: lem62step1}
    \sum_{i=1}^{n}\E \big[ |\mathcal{C}_{\veps}(\mu_n^{B_r},\nu_n)-\mathcal{C}_{\veps}(\mu_n,\nu_n)|\,\big| n_r=i\big] \cdot \mathbb{P}(n_r=i)
        \le   C\Big(M_p(\mu)^p+M_p(\nu)^p\Big)^{\frac{p-1}{p}}\cdot I_1
\end{equation}
where
\begin{equation}
I_1= \sum_{i=1}^{n}\bigg[\Big(M_p(\mu)^p+ M_p\big(\mu^{B_r^c}\big)^p\Big)\cdot\Big(1-\frac{n_r}{n}\Big)\bigg]^{\frac1p}\cdot \mathbb{P}(n_r=i).
\end{equation}
By Jensen's inequality,
\begin{equation}
\begin{split}
I_1^p\quad&\leq  
\Big[M_p(\mu)^p+ M_p(\mu^{B_r^c})^p \Big]\sum_{i=1}^{n} \Big(1-\frac{i}{n}\Big) \mathbb{P}(n_r=i)\\
&\overset{\mathclap{\eqref{eq:E1-nr/n}}}{\leq}\quad \Big[M_p(\mu)^p+ M_p(\mu^{B_r^c})^p \Big]\mu(B_r^c).\label{eq:lem62I1}
\end{split}
\end{equation}
Plugging~\eqref{eq:lem62I1} into~\eqref{eq: lem62step1} finishes the proof.

\step We now prove~\eqref{eq:lem62claim1st} following the proof of Lemma~\ref{lem: estimate of second term-after expectation} closely.  Observe that  Lemma~\ref{lem: estimate second term-before take the expetation} and H\"older's inequality yield
\begin{equation}\label{eq:lem62claim}
\E\big[|\mathcal{C}_{\veps}(\mu_n,\nu_n)-\mathcal{C}_{\veps}(\mu_n^{B_r},\nu_n)|\,\big| n_r\big]
\leq  C(A_1)^{\frac{p-1}{p}}\cdot (A_2)^{\frac1p},
\end{equation}
where
\begin{equation*}
\begin{split}
A_1&\overset{\mathclap{\eqref{eq:defL}}}{=}\E\big[\big(M_p(\mu_n)+M_p(\nu_n)+M_p(\mu_n^{B_r})+M_p(\nu_n)\big)^p\,\big| n_r\big]\\
A_2&=\E[W_p(\mu_n,\mu_n^{B_r})\, |n_r]\overset{\eqref{eq:WpmuNmuBN}}{\leq}\E\Big[\Big(\frac{1}{n_r}-\frac{1}{n}\Big)\sum_{X_i\in B_r} |X_i|^p+\frac{1}{n} \sum_{X_i\notin B_r}|X_i|^p\,\Big|n_r\Big].
\end{split}
\end{equation*}
For~$A_1$, the inequality~\eqref{eq:munMoment} and~\eqref{eq:munBMoment} give
\begin{equation}
    A_1 \leq C\Big((M_p(\mu))^p+(M_p(\nu))^p\Big). \label{eq:lem62A1}
\end{equation}
For~$A_2$, using the inequality~\eqref{eq:EXrnr} and~\eqref{eq:EXrcnr}, we have 
\begin{equation}\label{eq:lem62A2}
    A_2\leq \Big(M_p(\mu)^p +M_p(\mu^{B_r^c})^p\Big)\cdot  \Big(1-\frac{n_r}{n}\Big).
\end{equation}
Plugging~\eqref{eq:lem62A1} and~\eqref{eq:lem62A2} into~\eqref{eq:lem62claim} finishes the proof.
\end{proof}

\noindent Now we are in a position to prove Corollary~\ref{thm:nucompact}.
\begin{proof}[Proof of Corollary~\ref{thm:nucompact}]
The proof is very similar to the one of Theorem~\ref{thm: main}, with a few simplifications.

By assumption there exists~$r^{\nu}>0$ such that~$\supp(\nu)\subseteq B_{r^{\nu}}(0)$. Let us fix~$n\geq 5$ and choose~$r=r_n^{\mu}, s =r^{\nu}$,
where~$r_n^{\mu}$ is defined in~\eqref{eq:chooseRxRy}. 
By Assumption~\ref{assumption:Psialpha}, we have
\begin{equation}\label{eq:TailChooseRScpt}
\mu((B_n^\mu)^c)\le  \frac{2}{n^p},\quad \nu(B_s^c)=0.
\end{equation}
By the tower property of conditional expectation we have
\begin{equation}\label{eq:eg1T1T2}
\E [|\mathcal{C}_{\veps}(\mu_n,\nu_n)-\mathcal{C}_{\veps}(\mu,\nu)|]
=\E\big[\E \big[|\mathcal{C}_{\veps}(\mu_n,\nu_n)-\mathcal{C}_{\veps}(\mu,\nu)|\,\big|n_r\big]\big]=T_1+T_2,
\end{equation}
where
\begin{align}
T_1&:=\E \big[|\mathcal{C}_{\veps}(\mu_n,\nu_n)-\mathcal{C}_{\veps}(\mu,\nu)|\,\big|n_r=0\big]\cdot \mathbb{P}(n_r=0),\\
T_2&:=\sum_{i}^{n}\E \big[|\mathcal{C}_{\veps}(\mu_n,\nu_n)-\mathcal{C}_{\veps}(\mu,\nu)|\,\big| n_r=i\big]\cdot \mathbb{P}(n_r=i).
\end{align}
We bound the two terms~$T_1$ and~$T_2$ separately. For the term~$T_1$, Lemma~\ref{lem:nr=0cpt} and~\eqref{eq:GammaChooseRS} implies that
\begin{equation}\label{eq:eg1T1}
T_1\overset{\eqref{eq:TailChooseRScpt}}{\leq}  C \Big( 1+c_{\mu}^{-\frac{p}{\alpha_{\mu}}}+M_p(\nu)^p\Big)\Big(\frac{2}{n^{\frac{p}{2}}}\Big)^{n-1}\leq C \Big( 1+c_{\mu}^{-\frac{p}{\alpha_{\mu}}}+M_p(\nu)^p\Big)n^{-\frac12}. 
\end{equation}
We turn to~$T_2$. By the triangle inequality we have
\begin{align}
\begin{split}
\E \big[ |\mathcal{C}_{\veps}(\mu_n,\nu_n)-\mathcal{C}_{\veps}(\mu,\nu)|\,\big| n_r\big]&\le \E\big[|\mathcal{C}_{\veps}(\mu_n,\nu_n)-\mathcal{C}_{\veps}(\mu_n^{B_r},\nu_n)|\,\big| n_r]\\
&\quad +\E \big[|\mathcal{C}_{\veps}(\mu_n^{B_r},\nu_n)-\mathcal{C}_{\veps}(\mu^{B_r},\nu)|\,\big| n_r\big]\\
&\quad +|\mathcal{C}_{\veps}(\mu^{B_r},\nu)-\mathcal{C}_{\veps}(\mu,\nu)|. \label{eq:eg1.6T2}
\end{split}
\end{align}
For the first term, 
 \begin{align}
 \begin{split}
&\sum_{i=1}^{n}\E \big[|\mathcal{C}_{\veps}(\mu_n,\nu_n)-\mathcal{C}_{\veps}(\mu_n^{B_r},\nu_n)|\,\big| n_r=i\big]\cdot \mathbb{P}(n_r=i)\\
&\qquad\overset{\mathclap{\eqref{eq:Enr>0cpt}}}{\leq} C\Big(M_p(\mu)^p+M_p(\nu)^p\Big)^{\frac{p-1}{p}}\cdot\bigg[ \Big(M_p(\mu)^p+ M_p(\mu^{B_r^c})^p \Big)\mu(B_r^c)\bigg]^{\frac1p}\\
&\qquad\overset{\mathclap{\eqref{eq:GammaChooseRS}-\eqref{eq:IntGamma1}}}{\leq} \quad \frac{C}{\sqrt{n}}\Big(c_{\mu}^{-\frac{p}{\alpha_{\mu}}}+M_p(\nu)^p\Big)^{\frac{p-1}{p}}\paren[\Big]{1+ c_{\mu}^{-\frac{1}{\alpha_{\mu}}}}. \label{eq:eg1.6T1}
\end{split}
\end{align}
We now estimate the second term on the right hand side of~\eqref{eq:eg1.6T2}. 
By Corollary~\ref{cor: first term-before taking expecation} and Lemma~\ref{lem: binomial integrals} with~$a=\mu(B_n^\mu)\ge 1-2/n^2$ recalling~\eqref{eq:TailChooseRScpt},
we conclude for~$n\geq 5$

\begin{align}
&\sum_{i=1}^{n}\E \big[|\mathcal{C}_{\veps}(\mu_n^{B_r},\nu_n)-\mathcal{C}_{\veps}(\mu^{B_r},\nu)|\,\big| n_r=i\big]\cdot \mathbb{P}(n_r=i)\\
&\overset{\mathclap{\eqref{eq:EmpApproxCpt}}}{\leq}C_p(r+s)^p \sum_{i=1}^{n}\Big(\frac{1}{\sqrt{i}}+\frac{1}{\sqrt{n}}\Big)\cdot \mathbb{P}(n_r=i)
\\
&\quad+\Bigg(1+e^{\frac{C_{\psi}\tilde{\delta}}{2(t_0/2)^{\gamma}}}
\sqrt{\mathcal{N} \Big(\supp(\nu), \frac{\tilde \delta\veps}{2C_p(r+s)^{p-1}}\Big)}\Bigg)\cdot\sum_{i=1}^{n}\bigg[\frac{4\big(t_0+9C_p(r+s)^p\big)}{(in)^{\frac14}}+\frac{\veps}{\sqrt{in}}\bigg]\cdot \mathbb{P}(n_r=i)\\
&\overset{\mathclap{\eqref{eq:binOverN}}}{\leq}C_p(r+s)^p\frac{C}{\sqrt{n}}+\Bigg(1+e^{\frac{C_{\psi}\tilde{\delta}}{2(t_0/2)^{\gamma}}}
\sqrt{\mathcal{N} \Big(\supp(\nu), \frac{\tilde \delta\veps}{2C_p(r+s)^{p-1}}\Big)}\Bigg)\bigg[\frac{C_p(r+s)^p}{\sqrt{n}}+\frac{\veps}{n}\bigg].
\end{align}
For the last term on the right hand side of~\eqref{eq:eg1.6T2}, similar as the derivation of~\eqref{eq:eg1.6T1},
\begin{equation}
\abs{\mathcal{C}_{\veps}(\mu^{B_r},\nu)-\mathcal{C}_{\veps}(\mu,\nu)}\leq \frac{C}{\sqrt{n}} \Big(c_{\mu}^{-\frac{p}{\alpha_{\mu}}}+ M_p(\nu)^p\Big)^{\frac{p-1}{p}}\paren[\Big]{1+c_{\mu}^{-\frac{1}{\alpha_{\mu}}}} .
\end{equation}
Thus, we obtain 
\begin{align}
T_2&\leq \frac{C}{\sqrt{n}} \Big(c_{\mu}^{-\frac{p}{\alpha_{\mu}}}+M_p(\nu)^p\Big)^{\frac{p-1}{p}}\paren[\Big]{1+c_{\mu}^{-\frac{1}{\alpha_{\mu}}}}
+\frac{C}{\sqrt{n}}\\
&\qquad+\Big(\frac{C(r+s)^p}{\sqrt{n}}+\frac{C\veps}{n}\Big)\cdot 
         \Bigg(1+e^{\frac{C_{\psi}\tilde{\delta}}{2(t_0/2)^{\gamma}}}
\sqrt{\mathcal{N} \Big(\supp(\nu), \frac{\tilde \delta\veps}{2C_p(r+s)^{p-1}}\Big)}\Bigg). \label{eq:eg1T2}
\end{align}
Plugging~\eqref{eq:eg1T1} and~\eqref{eq:eg1T2} into~\eqref{eq:eg1T1T2} completes the proof.
    
\end{proof}

\end{document}